\definecolor{mybluegreen}{rgb}{0.1, 0.55, 0.35}
\newtheorem{theorem}{Theorem}[section]
\newtheorem{lemma}[theorem]{Lemma}
\newtheorem{example}[theorem]{Example}
\newtheorem{definition}[theorem]{Definition}
\newtheorem{conj}{Conjecture}[section]
\theoremstyle{definition}
\newcommand{\Sym}{\mathrm{Sym}} 
\newcommand{\ind}{\mathrm{ind}}
\DeclareMathOperator{\spanof}{span}
\definecolor{mybluegreen}{rgb}{0.1, 0.55, 0.35}
\newcolumntype{?}{!{\vrule width 2pt}}
\newcolumntype{M}[1]{>{\centering\arraybackslash}m{#1}}
\newcolumntype{P}[1]{>{\centering\arraybackslash}p{#1}}
\newcolumntype{N}{@{}m{0pt}@{}}
\title[EKR Theorem for $2$-intersecting families of perfect matchings]{The Erd\H{o}s-Ko-Rado theorem for $2$-intersecting families of perfect matchings}
\author[S.~Fallat]{Shaun Fallat${^1}$} \email[S.~Fallat]{shaun.fallat@uregina.ca}
\author[K.~Meagher]{Karen Meagher${^2}$ ${^*}$} \email[K.~Meagher]{karen.meagher@uregina.ca}
\author[M.~N.~Shirazi]{Mahsa N. Shirazi} \email[M.~N.~Shirazi]{mahsa.nasrollahi@gmail.com}
\address{Department of Mathematics and Statistics, University of Regina, Regina, SK, S4S 0A2,
Canada}
\thanks{${^1}$Research supported in part by an NSERC Discovery Research Grant,
    Application No.: RGPIN--2019--03934.}
\thanks{${^2}$Research supported in part by an NSERC Discovery Research Grant,
    Application No.: RGPIN-03852-2018.}
    \thanks{${^*}$Corresponding Author}
\date{\today}
\keywords{Erd\H{o}s-Ko-Rado Theorem, Perfect matchings, Association scheme, Ratio bound, Clique, Coclique, Quotient graphs, Character table}
\subjclass[2010]{05E30, 05C50, 05C25}
\begin{document}

\begin{abstract}  
A \textsl{perfect matching} in the complete graph on $2k$ vertices is a set of edges such that no two edges have a vertex in common and every vertex is covered exactly once. Two perfect matchings are said to be $t$-intersecting if they have at least $t$ edges in common. The main result in this paper is an extension of the famous Erd\H{o}s-Ko-Rado (EKR) theorem~\cite{EKR} to 2-intersecting families of perfect matchings for all values of $k$. Specifically, for $k\geq 3$ a set of 2-intersecting perfect matchings in $K_{2k}$ of maximum size has $(2k-5)(2k-7)\cdots (1)$ perfect matchings.  \\
\end{abstract}

\maketitle

\section{Introduction and Preliminaries}

In this paper we present two different approaches to establish a version of the Erd\H{o}s-Ko-Rado theorem for $2$-intersecting families of perfect matchings. There are many recent results that verify analogs of the  Erd\H{o}s-Ko-Rado theorem. This research area started with Erd\H{o}s, Ko, and Rado's work on systems of intersecting sets. In 1961, they proved if $\mathcal{F}$ is a $t$-intersecting family of $k$-subsets of $\{1,2,\ldots, n\}$, then there is a tight upper bound on the size of $\mathcal{F}$ with $n$ sufficiently large~\cite{EKR}.
\begin{theorem}[EKR]\cite{EKR}
If $\mathcal{F}$ is a $t$-intersecting family of $k$-subsets of $\{1,2,\ldots, n\}$, then there exists a function $f(k,t)$ such that if $n\geq f(k,t)$, then 
\begin{equation*}
|\mathcal{F}|\leq  \binom{n-t}{k-t}.
\end{equation*}
If equality holds, then $\mathcal{F}$ consists of all $k$-subsets containing a fixed $t$-subset of $\{1,2,\ldots, n\}$.

\end{theorem}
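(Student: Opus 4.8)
The plan is to prove the bound by the \emph{compression} (shifting) method and then analyze the resulting structured family for large $n$. Write $[m]=\{1,2,\dots,m\}$. For $1\le i<j\le n$ define the shift $S_{ij}$ on a family $\mathcal{F}\subseteq\binom{[n]}{k}$ by replacing each $A\in\mathcal{F}$ with $j\in A$, $i\notin A$ by $(A\setminus\{j\})\cup\{i\}$ whenever the latter set is not already in $\mathcal{F}$, and leaving all other members fixed. Two facts are immediate or nearly so: $S_{ij}$ preserves $|\mathcal{F}|$, and (this needs a short case analysis on two members $A,B$ and their images) $S_{ij}$ preserves the property of being $t$-intersecting. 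Since each nontrivial application strictly decreases $\sum_{A\in\mathcal{F}}\sum_{a\in A}a$, iterating the $S_{ij}$ over all pairs terminates in a \emph{compressed} family $\mathcal{F}^{*}$ that is stable under every $S_{ij}$, has $|\mathcal{F}^{*}|=|\mathcal{F}|$, and is still $t$-intersecting. Thus it suffices to bound compressed families.

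The heart of the argument is the structural claim that there is a threshold $f(k,t)$ so that for $n\ge f(k,t)$ every member of a compressed $t$-intersecting family contains $[t]$; this immediately gives $|\mathcal{F}^{*}|\le\binom{n-t}{k-t}$, since such members are determined by their $k-t$ elements outside $[t]$. To see the claim, first note that a nonempty compressed family must contain $[k]$, as shifting drives every set toward the small coordinates. Suppose some $A\in\mathcal{F}^{*}$ omitted an element of $[t]$, and let $r\le t$ be least with $r\notin A$. Stability of $\mathcal{F}^{*}$ under the shifts $S_{r,j}$ forces $(A\setminus\{j\})\cup\{r\}\in\mathcal{F}^{*}$ for each $j\in A$ with $j>r$; combined with the presence of low-supported members this yields, by a counting of the available coordinates valid once $n\ge f(k,t)$, two members of $\mathcal{F}^{*}$ meeting in fewer than $t$ elements. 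This contradicts the $t$-intersecting hypothesis, so no member omits an element of $[t]$.

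With the structural claim in hand, $|\mathcal{F}|=|\mathcal{F}^{*}|\le\binom{n-t}{k-t}$ for $n\ge f(k,t)$, which is the desired bound; equality forces $\mathcal{F}^{*}=\{A\in\binom{[n]}{k}:[t]\subseteq A\}$, the canonical $t$-star. It remains to transfer the equality characterization back to the original $\mathcal{F}$, and this is genuinely delicate: a shift can turn a non-star extremal family into the canonical star, so equality of sizes alone does not visibly propagate backward. I would handle it by a separate stability argument, examining which shifts $S_{ij}$ can act nontrivially on an extremal family and showing that, for $n$ large, any $t$-intersecting family that is not already the star of some fixed $t$-set has size strictly below $\binom{n-t}{k-t}$, a Hilton--Milner--type bound. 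Combining this with the size bound pins down the extremal families as exactly the $t$-stars.

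I expect two steps to carry the real difficulty. The first is the structural claim for compressed families: making precise how large $n$ must be and exhibiting, from the stability relations, an explicit pair of members that violates $t$-intersection when some member omits an element of $[t]$. The second is the uniqueness/equality analysis, where the non-invariance of the extremal configuration under shifting must be circumvented by a direct comparison of a putative non-star family against the $t$-star. The size bound itself then follows cleanly from the single counting identity $\binom{n-t}{k-t}=|\{A\in\binom{[n]}{k}:[t]\subseteq A\}|$.
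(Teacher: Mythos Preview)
The paper does not prove this theorem; it is quoted as background from \cite{EKR} (with the sharp threshold due to Wilson~\cite{W} and the full characterization due to Ahlswede--Khachatrian~\cite{AK}), so there is no in-paper argument to compare against. Your compression outline is the classical route and is in the spirit of the original Erd\H{o}s--Ko--Rado proof.

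That said, as a proof the proposal is still only a sketch, and the two places you flag as ``carrying the real difficulty'' are genuine gaps rather than routine details. For the structural claim, the assertion that stability under $S_{r,j}$ together with $[k]\in\mathcal{F}^{*}$ produces two members meeting in fewer than $t$ points is precisely the content of the theorem; one must actually exhibit, for $n$ large enough, a set $B\in\mathcal{F}^{*}$ (obtained by iterated shifts from $[k]$) with $|A\cap B|<t$, and this requires more than a single replacement step---the standard argument walks the intersection down one element at a time using compressedness and needs care with how the ``free'' coordinates beyond $2k-t$ are used. For uniqueness, you correctly note that shifting is not injective on extremal families; the clean way around this is not a Hilton--Milner comparison but to track which shifts acted nontrivially and argue directly that an extremal $t$-intersecting family already has a common $t$-element core before any shift is applied (or, alternatively, to invoke a separate stability theorem). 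As written, neither step is complete.
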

Twenty-three years after the publication of Erd\H{o}s, Ko and Rado's work, Wilson~\cite{W} enhanced their results by giving an algebraic proof of the their result with the exact value of $f(k, t)$ for all $k$ and $t$. Later in 1997, Ahlswede and Khachatrian~\cite{AK} found all maximum $t$-intersecting families of $k$-subsets for all values of $n$. In 2011, Ellis, Friedgut, and Pilpel~\cite{Ellis} showed that the analog of the EKR theorem holds for $t$-intersecting families of permutations of $\{1,\dots, n \}$, when $n$ is sufficiently large relative to $t$.  In 2005, Meagher and Moura~\cite{MM} proved that a natural version of the EKR theorem holds for uniform set-partitions. Recently, an algebraic proof of this well-known theorem for intersecting families of perfect matching was found by Godsil and Meagher~\cite{GMP} and their proof is based on eigenvalue techniques originally utilized by Wilson~\cite{W}. Further, they conjectured a version of the EKR theorem holds for $t$-intersecting families of perfect matchings, when $2k\geq 3t+2$. In 2018, Lindzey~\cite{L} proved this conjecture for all $t$, provided that $k$ is sufficiently large relative to $t$. In this paper we prove the conjecture holds for $t=2$ and all $k\geq 3$. \\

In Section~\ref{sec:background}, we provide some necessary background on perfect matchings and introduce the association scheme for perfect matchings. We convert the problem of finding the maximum size of an intersecting set of perfect matchings to the problem of finding a maximum coclique in a graph. Section~\ref{sec:FirstApproach} gives a proof of the result for some values of $k$; this proof uses the well-known clique-coclique bound. In Section~\ref{sec:Second Approach}, we derive a different approach that proves the result for all $k$. In this section we construct a matrix in the association scheme that is a weighted adjacency matrix for the graph in question, we  prove our result by showing the ratio bound holds with equality for this weighted adjacency matrix. We conclude this work with some possible related future directions and open problems and we include an appendix which provides several partial tables of eigenvalues for different graphs in the association scheme for perfect matchings.

\section{Background on Perfect Matchings}
\label{sec:background}

A \textsl{matching} $M$ in a graph $X$ is a set of edges such that no two edges have a vertex in common. If a matching covers every vertex of $X$, it is called a \textsl{perfect matching}~\cite{GG}. Two perfect matchings are said to be \textsl{$t$-intersecting} if they have at least $t$ edges in common. If $t=1$, we just say that they are intersecting. In this paper we only consider perfect matchings in complete graphs with an even number of vertices. Our goal is to find the size of the largest set of $2$-intersecting perfect matchings in $K_{2k}$ for all $k \geq 3$. A perfect matching is a special case of a uniform set-partition in which the size of each part is 2. In~\cite{MM} a proof for a version of the EKR theorem for uniform set-partitions is presented. However, the results given in~\cite{MM} are asymptotic in nature (as in the size of $k$ needs to be sufficiently large relative to $t$) and do not apply to perfect matchings when $t > 1$.

It is easy to check that the number of perfect matchings in $K_{2k}$ is
\begin{equation*}
\frac{1}{k!}\binom{2k}{2}\binom{2k-2}{2}\cdots \binom{2}{2} = (2k-1)(2k-3)(2k-5)\cdots 1.
\end{equation*}
For any positive integer $k$ define 
\begin{equation*}
(2k-1)!!:=(2k-1)(2k-3)(2k-5)\cdots 1,
\end{equation*}
so the number of perfect matchings in $K_{2k}$ is $(2k-1)!!$.

A set of all perfect matchings that contain a common set of $t$ edges is called a \textsl{canonically $t$-intersecting set}. The size of a canonically $t$-intersecting set of perfect matchings in $K_{2k}$ is $(2k-2t-1)!!$. For a set $T$ of $t$ disjoint edges in $K_{2k}$, we use $\nu_{T}$ to denote the characteristic vector of the set of all perfect matchings that include all the edges in $T$.

\subsection{Perfect matching derangement graph}

The approach we take is to define a graph in which every coclique is a set of intersecting perfect matchings. We then use algebraic techniques to find the size of the largest cocliques in this graph. To start, we state some well-known terminology.

Let $X$ be a graph. A \textsl{clique} in $X$ is a set of vertices in which any two are adjacent; a \textsl{coclique} is a set of vertices in which no two are adjacent. The size of a largest clique and a largest coclique are denoted by $\omega(X)$ and $\alpha(X)$, respectively. The \textsl{adjacency matrix} $A(X)$ of $X$ is a matrix in which rows and columns are indexed by the vertices and the $(i ,j)$-entry is 1 if $i\sim j$, and 0 otherwise. A \textsl{weighted adjacency matrix} $A_{W}(X)$ subordinate to $X$ is a symmetric matrix in which rows and columns are indexed by the vertices and the $(i, j)$-entry may be non-zero (which is interpreted as its edge weight) if $i\sim j$ and is 0 otherwise. The \textsl{eigenvalues} of $X$ refer to the eigenvalues of its adjacency matrix. We  use $\mathbf{1}$ to denote the all-ones vector; for any $d$-regular graph, the all-ones vector is an eigenvector with eigenvalue $d$.
 
In general, finding the largest coclique of a graph $X$ is a well-known NP-hard problem, but there is a famous upper bound on $\alpha(X)$ that we  use throughout this paper.

\begin{theorem}[Delsarte-Hoffman bound]\cite[p. 31]{GMB}\label{ratioBound}
Let $A$ be a weighted adjacency matrix for a graph $X$ on vertex set $V(X)$. If $A$ has constant row sum $d$ and least eigenvalue $\tau$, then 
\begin{equation*}\label{RatioBound}
\alpha(X)\leq \frac{|V(X)|}{1-\frac{d}{\tau}}.
\end{equation*}
If equality holds for some coclique $S$ with characteristic vector $\nu_{S}$, then 
\begin{equation*}
\nu_{S}-\frac{|S|}{|V(X)|}\mathbf{1}
\end{equation*}
is an eigenvector with eigenvalue $\tau$.
\end{theorem}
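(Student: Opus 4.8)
The plan is to run the standard Rayleigh-quotient argument due to Delsarte and Hoffman. Write $n=|V(X)|$ and let $A$ be the given weighted adjacency matrix; it is symmetric with constant row sum $d$, so $\mathbf{1}$ is an eigenvector of $A$ with eigenvalue $d$. Since $A$ has zero diagonal, its trace is $0$, so (assuming $d>0$, i.e.\ $X$ has an edge) $A$ has a negative eigenvalue and the least eigenvalue satisfies $\tau<0$. Hence $1-\tfrac{d}{\tau}>1>0$ and the right-hand side of the claimed inequality is well defined; this is the only place where positivity matters and it is immediate.

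Next, let $S$ be a coclique with characteristic vector $\nu_{S}$ and set $s=|S|$. Because no two vertices of $S$ are adjacent, every entry $A_{ij}$ with $i,j\in S$ is zero, so $\nu_{S}^{T}A\,\nu_{S}=0$. Decompose $\nu_{S}$ along $\mathbf{1}$ and its orthogonal complement: $\nu_{S}=\tfrac{s}{n}\mathbf{1}+w$ with $w^{T}\mathbf{1}=0$, where a direct computation gives $\lVert w\rVert^{2}=\lVert\nu_{S}\rVert^{2}-\tfrac{s^{2}}{n}=s-\tfrac{s^{2}}{n}$. Expanding $0=\nu_{S}^{T}A\,\nu_{S}$ with $A\mathbf{1}=d\mathbf{1}$, $\mathbf{1}^{T}A\mathbf{1}=dn$, and $\mathbf{1}^{T}Aw=d\,\mathbf{1}^{T}w=0$, one obtains
\begin{equation*}
0=\frac{d\,s^{2}}{n}+w^{T}Aw .
\end{equation*}

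Since $w$ is orthogonal to the $d$-eigenvector $\mathbf{1}$, the spectral theorem (the Rayleigh quotient restricted to the span of the remaining eigenvectors) gives $w^{T}Aw\ge \tau\lVert w\rVert^{2}$. Substituting this bound and the value of $\lVert w\rVert^{2}$ into the displayed identity yields $0\ge \tfrac{d\,s^{2}}{n}+\tau\bigl(s-\tfrac{s^{2}}{n}\bigr)$; dividing by $s>0$ and solving for $s$ gives $s\le \tfrac{-\tau n}{d-\tau}=\tfrac{n}{1-d/\tau}$, which is the bound. For the equality claim, if $s$ attains the bound then $w^{T}Aw=\tau\lVert w\rVert^{2}$, which forces $w$ to lie entirely in the $\tau$-eigenspace of $A$; as $w=\nu_{S}-\tfrac{|S|}{n}\mathbf{1}$ is nonzero whenever $S$ is a nonempty proper subset, it is an eigenvector of $A$ with eigenvalue $\tau$.

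I do not expect a genuine obstacle: the argument is classical and short. The only points needing a little care are checking $\tau<0$ so the statement is meaningful, and correctly carrying out the orthogonal decomposition so that the Rayleigh estimate is applied to $w$ rather than to $\nu_{S}$ itself — and it is precisely there that the hypothesis of constant row sum $d$ (equivalently, that $\mathbf{1}$ is an eigenvector) is used.
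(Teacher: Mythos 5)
Your argument is correct: the decomposition $\nu_{S}=\tfrac{|S|}{n}\mathbf{1}+w$, the identity $0=\nu_{S}^{T}A\nu_{S}=\tfrac{d|S|^{2}}{n}+w^{T}Aw$, and the estimate $w^{T}Aw\ge\tau\lVert w\rVert^{2}$ together give exactly the stated bound and the equality characterization. The paper does not prove this theorem itself but cites it from \cite[p.~31]{GMB}, and your proof is the standard Rayleigh-quotient argument given there, so there is nothing further to compare.
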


This bound is based on the ratio between the largest and the smallest eigenvalue for a weighted adjacency matrix, thus it is also known as the \textsl{Ratio Bound}. The Ratio Bound is important here since we  apply it to a graph defined so that the cocliques are sets of $2$-intersecting perfect matchings.

\begin{definition}\label{Mt(2k)}\cite{GMP}
Define the \textsl{perfect matching derangement graph} $M_{t}(2k)$ to be the graph whose vertices are perfect matchings on complete graph $K_{2k}$. In this graph two vertices are adjacent if they have at most $(t-1)$ edges in common. Denote the adjacency matrix of $M_{t}(2k)$ by $A_{t}(2k)$.
\end{definition}

In a coclique of $M_{t}(2k)$, any two vertices are not adjacent; thus they have more than $t-1$ edges in common or in other words, they are $t$-intersecting perfect matchings. Using the Delsarte-Hoffman bound, our problem transforms into finding a weighted adjacency matrix for $M_2(2k)$, for any $k \geq 3$, with a sufficiently large ratio between the largest and least eigenvalues. This method to prove EKR theorems was first developed by Wilson in 1984~\cite{W}.  In 2015, Godsil and Meagher applied this method to the family of all perfect matchings of the complete graph $K_{2k}$ to find the largest set of intersecting perfect matchings ($t=1$)~\cite{GMP}; later in 2017 it was applied to $t$-intersecting perfect matchings by Lindzey~\cite{L}. \\

\begin{example}[$M_{1}(6)$]
In Definition~\ref{Mt(2k)}, let $t=1$ and $2k=6$. The number of perfect matchings in $K_{6}$ is $5!!$, so $M_{1}(6)$ has 15 vertices. Two vertices here are adjacent if they are not intersecting. Therefore we have,
\begin{figure}[H]
\includegraphics[scale=0.5]{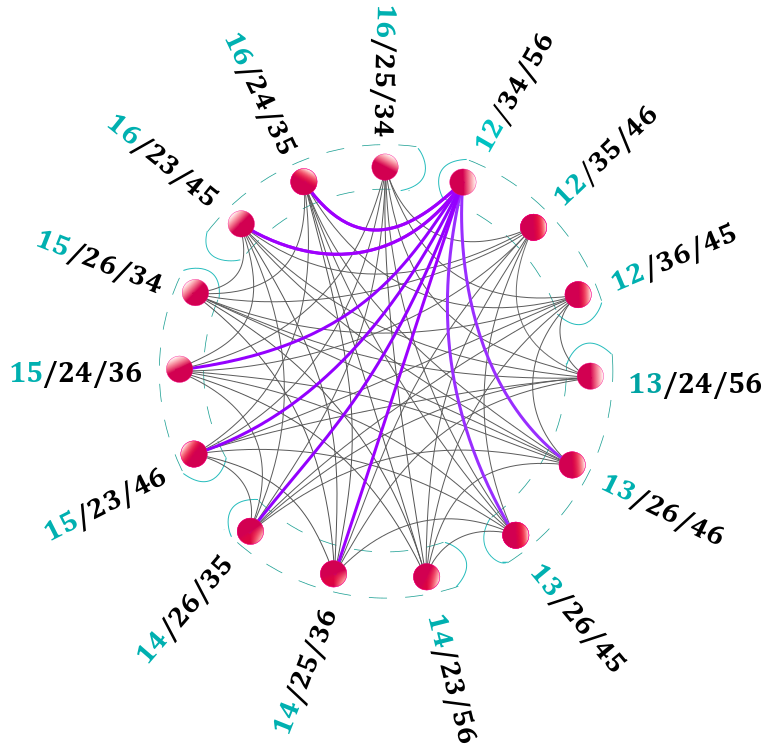}
\caption{Graph $M_{t}(2k)$ when $t=1$ and $2k=6$}
\end{figure}
\end{example}

\subsection{Perfect matching association scheme} 

A \textit{set partition} $P=\{P_1, \ldots, P_\ell\}$, of the set $\{1,2,\ldots, n\}$ is a grouping the elements of this set into nonempty subsets (parts) such that each element is included in exactly one part. A set partition in which all parts have even size is called an \textit{even set partition}. 
 An \textsl{integer partition} of a positive integer $n$ is a list $\lambda = [\lambda_1,\lambda_2, \dots, \lambda_\ell]$ of positive integers with $n = \sum_i \lambda_i$; typically in an integer partition $\lambda_i \geq \lambda_{i+1}$. We  denote an integer partition of $n$ by $\lambda \vdash n$. An integer partition in which all parts have even size is called an \textsl{even partition}. For example, if $\lambda \vdash k$, with $\lambda = [\lambda_1,\lambda_2, \dots, \lambda_\ell]$, then $2\lambda = [ 2\lambda_1, 2\lambda_2, \dots, 2\lambda_\ell]$ is an even partition. The set of sizes of the parts in a set partition $P$ of $\{1,2\dots,n\}$ is an integer partition of $n$, we  call the integer partition the \textsl{shape} of $P$. (See~\cite[Sections 3.1 and 15.4]{GMB} for more details about partitions.)\\

It is easily seen that any perfect matching of $K_{2k}$ is an even set partition with all parts of size 2; and the shape of any perfect matching is the integer partition $[2,2,\dots,2]$. Taking the union of (or overlapping) two perfect matchings in $K_{2k}$ produces disjoint even cycles in $K_{2k}$, where the union of parallel edges gives rise to 2-cycles. Any two perfect matchings in $K_{2k}$ produce a set partition of the set $\{1,\dots,2k\}$ where each part is the set of vertices contained in one of the even cycles in the union of the two perfect matchings. The shape of this set partition is the integer partition $\lambda = [ \lambda_{1},\lambda_{2}, \cdots, \lambda_{\ell} ]$, here the cycles have size $\lambda_{i}$. Such an integer partition will be even. Using this we  define a set of graphs on the collection of all perfect matchings.

\begin{definition}\label{CycleType}
Let $k$ be an integer and $\lambda \vdash k$. Define $A_{2\lambda}$ to be a matrix in which rows and columns are indexed by perfect matchings of $K_{2k}$. In $A_{2\lambda}$ the $(P,Q)$-entry is 1 if the union of the perfect matchings $P$ and $Q$ has shape $2\lambda$, and 0 otherwise.
\end{definition}

We define 
\[
\mathcal{A}=\{ A_{2\lambda} \, | \, \lambda \vdash k\}.
\] 
The set $\mathcal{A}$ forms a symmetric association scheme which is known as the \textsl{perfect matching association scheme}~\cite[Section 15.4]{GMB}, and the matrix algebra $\mathbb{C}[\mathcal{A}]$ constructed by the complex linear combinations of the matrices in $\mathcal{A}$ is called the \textsl{Bose-Mesner algebra} of this association scheme. For a matrix $A_{2\lambda} \in \mathcal{A}$ define the graph $X_{2\lambda}$ so that $A_{2\lambda}$ is its adjacency matrix. A \textsl{graph in the association scheme} is any graph $X$ with $A(X) \in \mathbb{C}[\mathcal{A}]$. Every graph in this association scheme is an undirected graph with the set of perfect matchings as its vertex set. 

The symmetric group $\Sym(2k)$ acts transitively on the set of perfect matchings of $K_{2k}$. This action preserves the adjacencies 
in  $M_{t}(2k)$. This implies $\Sym(2k)$ is a subset of the automorphism group of the graph $M_{t}(2k)$ and that $M_{t}(2k)$ is vertex transitive. For any $\lambda \vdash k$, consider the set of all pairs of perfect matchings $P$ and $Q$, with the
property that the union of $P$ and $Q$ has shape $2\lambda$. It is known that $\Sym(2k)$ is transitive on this set of pairs (again see~\cite[Section 15.4]{GMB}), this implies each graph $X_{2\lambda}$ is edge transitive.

We state some properties of this association scheme and refer the reader to~\cite[Chapter 3]{GMB} for more details and proofs. Denote the \textsl{Schur product} of two matrices $A=[a_{i,j}]$ and $B = [b_{i,j}]$ by the matrix $A \circ B $ whose $(i,j)$-entry is given by $[a_{i,j}b_{i,j}]$. For any two matrices $A_{\lambda_{i}}$ and $A_{\lambda_{j}}$ in $\mathcal{A}$,
\begin{equation*}
A_{\lambda_{i}}\circ A_{\lambda_{j}} = \left\{
\begin{array}{ll}
      0 & i\neq i , \\
      A_{\lambda_{i}} & i=j.
\end{array}
\right.
\end{equation*}
This implies the matrices $A_{\lambda_{i}}$ are linearly independent and Schur orthogonal. Hence the matrix set $\{ A_{2\lambda} \, | \, \lambda \vdash n \}$ is an orthogonal basis for the Bose-Mesner algebra of this scheme.
Further, the matrices in $\mathcal{A}$ are symmetric and commute, thus they are simultaneously diagonalizable. 

The group $\Sym(2k)$ acts transitively on the set of perfect matchings of $K_{2k}$, and the stabilizer of a single perfect matching under this action is isomorphic to the wreath product $\Sym(2) \wr \Sym(k)$. The action of the group $\Sym(2k)$ on the set of perfect matchings is equivalent its action on the cosets of $\Sym(2k) / \left( \Sym(2) \wr \Sym(k) \right)$. The perfect matching scheme is the Schurian association scheme, or  the orbital scheme of the action of $\Sym(2k)$ on the cosets $\Sym(2k) / \left( \Sym(2) \wr \Sym(k) \right)$. 

The $(2k-1)!!$-dimensional vector space of vectors indexed by the perfect matchings is a 
$\Sym(2k)$-module. It is well-known that the irreducible representations of $\Sym(2k)$ correspond to integer partitions of $2k$~\cite{Ra} and that this module can be expressed as the sum of irreducible modules $\Sym(2k)$ corresponding to  even integer partitions of $2k$. We  denote these irreducible modules by the corresponding even integer partitions (see~\cite[Chapter 15]{GMB} for details). (For example, $[2k]$ will be used to denote the irreducible module corresponding to the trivial representation; this is the 1-dimensional vector space of constant vectors of length $(2k-1)!!$.) It follows that the common eigenspaces of the matrices in the perfect matching association scheme are unions of these irreducible modules, thus the common
eigenspaces in the perfect matching association scheme correspond to the even integer partitions of $2k$. Further details on these eigenspaces are in Subsection~\ref{Mod of Char Table}.

The graph $M_{t}(2k)$ is the union of the graphs of the scheme $X_{\lambda}$ in which the even partition $\lambda$ has at most $t-1$ cycles of length 2, this can be expressed as
\begin{equation*}
A_{t}(2k) = \sum_{\lambda \vdash 2k}A_{\lambda},
\end{equation*}
where the sum is take over partitions $\lambda$ have at most $t-1$ parts of length 2. Further, the eigenvalues of $M_{t}(2k)$ are the sums of the eigenvalues of matrices $A_{\lambda}$ in this sum.

\section{Clique-Coclique Approach}
\label{sec:FirstApproach}

For the first approach, we  construct a large clique in $M_2(2k)$. This clique is constructed using \textsl{difference sets} and \textsl{projective planes} which are introduced and discussed in the next subsection. Then we  use this clique to construct a weighted adjacency matrix $M$ of $M_{2}(2k)$, and we will show that the Delsarte-Hoffman bound holds with equality for this matrix. This approach only works for some values of $k$, but motivates the second approach.

We follow the approach from~\cite{MR3990672}, where the authors show how to find the matrix used in Wilson's proof of the EKR Theorem~\cite{W}.

\subsection{Singer difference sets and finite projective plane }
\label{sec:SinderDS}

A \textsl{symmetric balanced incomplete design} with parameters $(v, k, \lambda)$ is a collection of subsets from a base set of size $v$; these subsets are called blocks; with the property that each block has exactly $k\geq 1$ elements where $v>k$, each element appears in exactly $k$ blocks, and each pair of elements appears in exactly $\lambda \geq 1$ blocks. 

\begin{definition}\label{FPP}\cite[p.51]{Wallis}
A symmetric balanced incomplete block design with parameters $(n^{2}+n+1, n+1, 1)$ is called a \textsl{finite projective plane of order $n$} in which the base set is the point set of size $n^{2}+n+1$ and each block ($(n+1)$-subset) in this design a block is called a line.

Alternatively, A finite projective plane consists a finite set of points $P$ and a set $L$ of subsets of $P$, called lines, satisfying the axioms (P1), (P2), and (P3):
\begin{itemize}
\item[(P1)]Given two points in $P$, there is exactly one line that contains both.

\item[(P2)]Given two lines in $L$, there is exactly one point on both.

\item[(P3)]There are four points of which no three are co-linear.
\end{itemize}
\end{definition}

A difference set $(v, k, \lambda)$-DS is a $k$-set $B$ of an abelian group $G$ of order $v$ in which for any nonzero element $g$ of $G$ there are exactly $\lambda$ ordered pairs in $B$ such that $g$ can be constructed as their difference~\cite[Section 5.1]{Wallis}. If the group $G$ is cyclic, then the difference set $(v, k, \lambda)$-DS is called a cyclic difference set. 

A difference set $D = \{d_1 , d_2, \dots , d_k \}$ can be \textsl{developed} in to a symmetric balanced incomplete block design with the blocks
\[
D + g := \{d_1+g, d_2+g, \dots ,d_k+g \}
\]
for all $g \in G$. The design developed by a $(v, k, \lambda)$-DS is actually a symmetric balanced incomplete block design with the same parameters~\cite[p.64]{Wallis}. \\

A difference set may not exist for an arbitrary choice of the parameters $(v, k, \lambda)$. But the following theorem shows the existence of a special type of a difference set, called a \textsl{Singer difference set}. As a result if $n$ is any prime power, then a finite projective plane of order $n$ exists.

\begin{theorem}\cite[Section 12.2]{Colburn}
If $n$ is a prime power, there is a cyclic difference set with parameters:
\begin{equation*}
\left( \frac{n^{d+1}-1}{n-1}, \frac{n^{d}-1}{n-1}, \frac{n^{d-1}-1}{n-1} \right).
\end{equation*}
These difference sets are called \textsl{Singer difference sets}.
\end{theorem}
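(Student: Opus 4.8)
The plan is to exhibit these difference sets geometrically inside the finite field $\mathbb{F}_{q^{d+1}}$, where $q=n$ is the given prime power. Put $v=\frac{q^{d+1}-1}{q-1}$, $k=\frac{q^{d}-1}{q-1}$ and $\lambda=\frac{q^{d-1}-1}{q-1}$, and regard $\mathbb{F}_{q^{d+1}}$ as a $(d+1)$-dimensional vector space over $\mathbb{F}_{q}$. Its $1$-dimensional subspaces are the points of $\mathrm{PG}(d,q)$, and there are $v$ of them; its hyperplanes (the $d$-dimensional subspaces) also number $v$, and each hyperplane contains exactly $k$ points. Fix a primitive element $\gamma$ of the cyclic group $\mathbb{F}_{q^{d+1}}^{*}$. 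Since $\mathbb{F}_{q}^{*}$ is its unique subgroup of order $q-1$, we have $\mathbb{F}_{q}^{*}=\langle\gamma^{v}\rangle$, so the assignment $\gamma^{i}\mathbb{F}_{q}^{*}\mapsto i\bmod v$ is a well-defined bijection from the point set of $\mathrm{PG}(d,q)$ onto the cyclic group $\mathbb{Z}_{v}$. Under this labelling, multiplication by $\gamma$ (the \emph{Singer cycle}) sends the point labelled $i$ to the point labelled $i+1$.

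First I would fix a hyperplane $H$ and let $D\subseteq\mathbb{Z}_{v}$ be the set of labels of the points lying on $H$, so that $|D|=k$; the goal is then to show that $D$ is a cyclic $(v,k,\lambda)$-difference set, which is exactly the assertion of the theorem. The key auxiliary fact is that the Singer cycle also permutes the hyperplanes regularly. To see this, recall that every $\mathbb{F}_{q}$-linear functional $\mathbb{F}_{q^{d+1}}\to\mathbb{F}_{q}$ has the form $x\mapsto\mathrm{Tr}(\alpha x)$ for a unique $\alpha\in\mathbb{F}_{q^{d+1}}$, where $\mathrm{Tr}$ denotes the trace down to $\mathbb{F}_{q}$; writing the fixed hyperplane as $H=\ker(x\mapsto\mathrm{Tr}(\alpha x))$ with $\alpha\neq 0$, a direct substitution gives $\gamma^{j}H=\ker(x\mapsto\mathrm{Tr}(\alpha\gamma^{-j}x))$. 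Since two nonzero functionals define the same hyperplane precisely when they differ by a scalar in $\mathbb{F}_{q}^{*}$, it follows that $\gamma^{j}H=H$ if and only if $v\mid j$. In particular, for every $g\not\equiv 0\pmod v$ the hyperplanes $H$ and $\gamma^{-g}H$ are distinct.

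Finally I would count difference representations. A pair $(a,b)\in D\times D$ with $a-b=g$ corresponds bijectively to the point $P$ labelled $b$: this point lies on $H$, and since $\gamma^{g}$ shifts labels by $g$, the point $\gamma^{g}P$ labelled $b+g=a$ also lies on $H$, i.e.\ $P\in H\cap\gamma^{-g}H$; conversely every such $P$ arises this way. For $g\neq 0$ this is the intersection of two distinct hyperplanes of $\mathrm{PG}(d,q)$, hence a projective subspace of dimension $d-2$, which contains exactly $\frac{q^{d-1}-1}{q-1}=\lambda$ points. Thus every nonzero $g$ has precisely $\lambda$ representations as a difference of two elements of $D$, so $D$ is a $(v,k,\lambda)$-difference set, and it is cyclic since $\mathbb{Z}_{v}$ is. I expect the only place needing real care is the pair of regularity statements for the Singer cycle: the action on points is immediate from the labelling, while the action on hyperplanes requires the trace-form description above, and one must keep the bookkeeping inside $\mathbb{F}_{q^{d+1}}^{*}/\mathbb{F}_{q}^{*}$ straight; the remaining ingredient, that two distinct hyperplanes of $\mathrm{PG}(d,q)$ meet in a $(d-2)$-flat with $\frac{q^{d-1}-1}{q-1}$ points, is standard. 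Specializing to $d=2$ yields a cyclic $(q^{2}+q+1,\,q+1,\,1)$-difference set, which develops into a finite projective plane of order $q=n$, recovering the existence statement preceding the theorem. (A Gauss-sum computation with the additive characters of $\mathbb{Z}_{v}$ gives an alternative proof but is less transparent.)
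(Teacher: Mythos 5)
The paper does not actually prove this statement---it is quoted from Colbourn--Dinitz, and the text only records the construction $\mathcal{D}=\{i \mid \mathrm{Tr}(\alpha^{i})=0\}$ without justification---so there is no internal proof to compare against. Your argument is a correct, self-contained proof of the cited result, and it is the classical one: it verifies that the trace-kernel set the paper writes down (your $D$ is exactly the label set of the hyperplane $\ker(\mathrm{Tr})$, read modulo $v$, which is well defined since $\gamma^{v}\in\mathbb{F}_{q}^{*}$) really is a cyclic $(v,k,\lambda)$-difference set. All the steps check out: the bijection $\gamma^{i}\mathbb{F}_{q}^{*}\mapsto i\bmod v$ identifies the points of $\mathrm{PG}(d,q)$ with $\mathbb{Z}_{v}$; the trace-form description of hyperplanes shows $\gamma^{j}H=H$ iff $v\mid j$, so the Singer cycle acts regularly on hyperplanes as well as on points; and the bijection between representations $a-b=g$ and points of $H\cap\gamma^{-g}H$, together with the fact that two distinct hyperplanes of $\mathrm{PG}(d,q)$ meet in a $(d-2)$-flat with $\frac{q^{d-1}-1}{q-1}=\lambda$ points, gives exactly $\lambda$ representations of each nonzero $g$. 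The only caveat worth recording is that the statement is meaningful as a difference set with $\lambda\geq 1$ only for $d\geq 2$ (for $d=1$ one gets $\lambda=0$); since the paper only invokes the case $d=2$ to build the projective planes of order $2^{a}$, this is harmless.
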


To construct a Singer difference set with the parameters in the above theorem, let $\alpha$ be a generator of the multiplicative group of $\mathcal{F}_{n^{d+1}}$ ($\alpha$ is called primitive). Consider the usual trace function $Tr$ from  $\mathcal{F}_{n^{d+1}}$ to  $\mathcal{F}_{n}$, this function maps an element $x$ to $\sum_{i=0}^{d}x^{n^{i}}$. Then the following set is the desired difference set
\begin{equation}\label{DS}
\mathcal{D} = \{i\,|\,Tr(\alpha^{i}) = 0 \}.
\end{equation}
For more details see~\cite[Section 5.5]{Wallis} and~\cite[Section 12.2]{Colburn}. In particular, if $d=2$ and $n=2^{a}$, for every positive integer $a$, there exists a $(2^{2a}+2^{a}+1, 2^{a}+1, 1)$-DS and so a finite projective plane of the same parameters. This projective plane has $2^{2a}+2^{a}+1$ lines and each line has $2^{a}+1$ points on it.

\begin{definition}\cite[Section 4.1]{Wallis}
Consider an $(n+1)$-subset $\mathcal{O}$ of the point set in a finite projective plane with parameters $(n^{2}+n+1, n+1, 1)$. If the intersection of $\mathcal{O}$ with any line in this plane has at most 2 points in common, $\mathcal{O}$ is called an oval.
\end{definition}

If $\mathcal{D}$ is a Singer difference set for the finite projective plane with parameters $(2^{2a}+2^{a}+1, 2^{a}+1, 1)$-DS, then $-\mathcal{D}$ is an oval, and the set of lines is given by
\begin{equation}\label{Lines}
\mathcal{L}=\{ \mathcal{D}+x|x\in \mathbb{Z}_{2^{2a}+2^{a}+1} \}.
\end{equation}
\begin{lemma}\label{LinesWithZero} 
Let $\mathcal{P}$ be the finite projective plane with parameters $(2^{2a}+2^{a}+1, 2^{a}+1, 1)$ developed from the Singer difference set $\mathcal{D}$. Then in $\mathcal{P}$:
\begin{enumerate}[label=(\alph*)]
\item the lines containing the zero element are exactly the ones formed by adding an element of the oval $\mathcal{O} = -\mathcal{D}$ to the set $\mathcal{D}$;
\item for every $s\in \mathbb{Z}_{2^{2a}+2^{a}+1} \backslash \mathcal{O}$, there exists exactly one line which contains both $s$ and zero; and
\item each line containing zero, also contains exactly one element from $\mathcal{O}$.
\end{enumerate}
\end{lemma}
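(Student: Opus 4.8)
The plan is to dispatch (a) and (b) by direct coset computations in $\mathbb{Z}_{v}$ (writing $v=2^{2a}+2^{a}+1$ and $n=2^{a}$) and to reduce (c) to a short double-counting argument powered by one structural fact. Recall that $\mathcal{D}$ is a $(v,n+1,1)$-difference set, $|\mathcal{D}|=n+1$, $\mathcal{O}=-\mathcal{D}$, and, since $\mathcal{D}$ develops into a genuine symmetric design (hence into exactly $v$ blocks), the translates $\mathcal{D}+x$ are pairwise distinct. For (a): $0\in\mathcal{D}+x$ iff $-x\in\mathcal{D}$ iff $x\in-\mathcal{D}=\mathcal{O}$, so the lines through $0$ are precisely the $n+1$ distinct sets $\mathcal{D}+o$, $o\in\mathcal{O}$, which is the whole pencil through a point. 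For (b): first note $0\notin\mathcal{O}$, since in characteristic $2$ one has $\mathrm{Tr}(\alpha^{0})=\mathrm{Tr}(1)=1+1+1=1\neq 0$, so $0\notin\mathcal{D}$ and hence $0\notin-\mathcal{D}=\mathcal{O}$. For $s\notin\mathcal{O}$ with $s\neq 0$ (one needs $s\neq 0$ for the assertion to hold literally, since $0\notin\mathcal{O}$), axiom (P1) provides the unique line through $s$ and $0$; equivalently, a line through $0$ written as $\mathcal{D}+o$ with $-o=d_{2}\in\mathcal{D}$ contains $s$ iff $d_{1}:=s+d_{2}\in\mathcal{D}$, and the pairs $(d_{1},d_{2})\in\mathcal{D}^{2}$ with $d_{1}-d_{2}=s$ number exactly $\lambda=1$.

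For (c), the first step is a key bijection. Fix $o\in\mathcal{O}$ and write $o=-e$ with $e\in\mathcal{D}$. A point $x$ lies in $(\mathcal{D}+o)\cap\mathcal{O}$ iff $x-o\in\mathcal{D}$ and $-x\in\mathcal{D}$, and the map $x\mapsto(x-o,\,-x)$ is a bijection from $(\mathcal{D}+o)\cap\mathcal{O}$ onto the set of ordered pairs $(d,d')\in\mathcal{D}^{2}$ with $d+d'=e$. Writing $r(e)$ for the number of such additive representations, this gives $|(\mathcal{D}+o)\cap\mathcal{O}|=r(e)$, so (c) is exactly the claim that $r(e)=1$ for every $e\in\mathcal{D}$.

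The structural fact I would use is $2\mathcal{D}=\mathcal{D}$. This holds because, in characteristic $2$, the Frobenius map $\gamma\mapsto\gamma^{2}$ satisfies $\mathrm{Tr}(\gamma^{2})=\mathrm{Tr}(\gamma)^{2}$, so it carries the trace-zero subspace $\ker\mathrm{Tr}$ of $\mathbb{F}_{n^{3}}$ into itself, hence (being injective) bijectively onto itself; passing to exponents modulo $v$ (and using that $\mathcal{D}$ is well defined modulo $v$) turns this into $2\mathcal{D}=\mathcal{D}$. Since $v$ is odd, $e/2$ is meaningful in $\mathbb{Z}_{v}$, and $2\mathcal{D}=\mathcal{D}$ forces $e/2\in\mathcal{D}$ whenever $e\in\mathcal{D}$; thus $e=e/2+e/2$ shows $r(e)\geq 1$. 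To finish, I would double-count incidences between $\mathcal{O}$ and the pencil of lines through $0$. By (a) this pencil is the family of distinct sets $\mathcal{D}+o$, $o\in\mathcal{O}$, so the number of incidences equals $\sum_{o\in\mathcal{O}}|(\mathcal{D}+o)\cap\mathcal{O}|=\sum_{o\in\mathcal{O}}r(-o)=\sum_{e\in\mathcal{D}}r(e)$. On the other hand, since $0\notin\mathcal{O}$, each of the $|\mathcal{O}|=n+1$ points of $\mathcal{O}$ lies on exactly one line through $0$ by (b), so the number of incidences equals $n+1=|\mathcal{D}|$. Therefore $\sum_{e\in\mathcal{D}}r(e)=|\mathcal{D}|$ with every term at least $1$, forcing $r(e)=1$ for all $e\in\mathcal{D}$, which is (c).

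I expect the crux to be identifying the right structural input for (c): the value $r(e)=1$ cannot be extracted from the difference-set condition $\lambda=1$ alone, since that condition governs differences rather than sums. One therefore needs either the characteristic-$2$ identity $2\mathcal{D}=\mathcal{D}$ used above, or, in the geometric language of the lemma, the classical fact that an oval in a projective plane of even order has a nucleus lying on all of its tangent lines, together with the observation that this nucleus is here the point $0$ (equivalently, that $\mathcal{O}\cup\{0\}$ is a hyperoval); from that viewpoint (c) says precisely that the $n+1$ lines through $0$ are the $n+1$ tangents of $\mathcal{O}$. The remaining steps are routine bookkeeping.
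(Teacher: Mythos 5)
Your proof is correct, and for parts (a) and (b) it matches the paper's argument (modulo your justified observation that the statement of (b) should exclude $s=0$, which does lie in $\mathbb{Z}_{2^{2a}+2^a+1}\setminus\mathcal{O}$ because $\mathrm{Tr}(1)=1$ in characteristic $2$). For part (c), however, you take a genuinely different and more complete route. The paper argues only from the difference-set condition: for each $-d_t\in\mathcal{O}$ there is exactly one ordered pair $(d_i,d_j)\in\mathcal{D}^2$ with $d_i-d_j=-d_t$, so each point of $\mathcal{O}$ lies on exactly one line of the pencil through $0$; it then concludes that each line of the pencil contains exactly one point of $\mathcal{O}$. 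That last inference transposes the incidence count, and equal cardinalities alone do not yield it --- one still needs injectivity or surjectivity of the map sending a point of $\mathcal{O}$ to the line joining it to $0$. Your reformulation of (c) as the sum-representation count $r(e)=|\{(d,d')\in\mathcal{D}^2 : d+d'=e\}|=1$ makes the issue precise, and your remark that $\lambda=1$ controls differences but not sums is exactly right: for the order-$3$ plane with difference set $\{1,2,4,10\}\subset\mathbb{Z}_{13}$ one computes $r(10)=0$ and $r(1)=2$, so the transposed inference fails for general Singer difference sets and the even-order hypothesis must enter somewhere. Your characteristic-$2$ identity $2\mathcal{D}=\mathcal{D}$ (via Frobenius and $\mathrm{Tr}(\gamma^2)=\mathrm{Tr}(\gamma)^2$), which gives $r(e)\geq 1$ from $e=e/2+e/2$, combined with the double count $\sum_{e\in\mathcal{D}}r(e)=|\mathcal{O}|=|\mathcal{D}|$, closes the argument cleanly; it is the arithmetic form of the classical fact that $0$ is the nucleus of the oval $\mathcal{O}$, i.e.\ that $\mathcal{O}\cup\{0\}$ is a hyperoval. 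In short, your approach buys a complete proof of (c), while the paper's buys brevity at the cost of a step that, as written, does not go through.
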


\begin{proof}
The lines containing the zero element are 
\[
\mathcal{D}^{*} = \{D+(-d_{1}), D+(-d_{2}), \cdots, D+(-d_{2^{a}+1}) \}.
\]
Also, for every $s\in \mathbb{Z}_{2^{2a}+2^{a}+1} \backslash \mathcal{O}$, there exists exactly one pair $(d_{i}, d_{j})$ in $D$ such that $s=d_{i}-d_{j}$ (by the definition of the Singer difference set and the fact that here $\lambda = 1$). Now, the line $\mathcal{D}+(-d_{j})$ is
\begin{equation*}
\mathcal{D}+(-d_{j}) = \{d_{1}-d_{j}, \cdots, \underbrace{d_{i}-d_{j}}_{s}, \cdots, \underbrace{d_{j}-d_{j}}_{0}, \cdots, d_{2^{a}+1}-d_{j}  \}.
\end{equation*}
So $s\in \mathcal{D} +(-d_{j})$, and $\mathcal{D} +(-d_{j})$ is the only line in $\mathcal{D}^{*}$ containing $s$, since otherwise there would be two lines in $\mathcal{D}^{*}$ containing both $s$ and 0. On the other hand, for every $d_{t} \in \mathcal{D}$, there is exactly one pair $(d_{i}, d_{j})$ in $\mathcal{O}$ for which $-d_{t} = d_{i}-d_{j}$, thus $-d_{t} \in \mathcal{D} -d_{j}$. So each line in $\mathcal{D}^{*}$ includes exactly one element of $\mathcal{O}$.\\
\end{proof}

\subsection{Construction of large cliques}

We know that $X_2(2k)$ is a vertex-transitive graph. We  apply the following well-known bound on the size of a coclique on a vertex-transitive graph.

\begin{theorem}[Clique-Coclique Bound]\cite[p.26]{GMB} Let $X$ be a vertex-transitive graph. Then
\begin{equation*}
\alpha(X) \,  \omega(X)\leq |V(X)|.
\end{equation*}
\\
If equality holds for a clique $C$ and a coclique $S$, then the vectors 
\begin{equation*}
\chi_{C}-\frac{|C|}{v}\mathbf{1}, \quad \chi_{S}-\frac{|S|}{v}\mathbf{1}
\end{equation*}
are orthogonal, where $\chi_{C}$ and $\chi_{S}$ are the characteristic vectors of $C$ and $S$ respectively. In particular $\chi_{C}^{T}\chi_{S} = 1$.
\end{theorem}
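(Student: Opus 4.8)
The plan is to prove this by an averaging argument over the automorphism group. The elementary starting point is that in \emph{any} graph a clique $C$ and a coclique $S$ meet in at most one vertex: two distinct common vertices would have to be both adjacent (since $C$ is a clique) and non-adjacent (since $S$ is a coclique). Consequently, for every automorphism $g$ of $X$ the image $gC$ is again a clique, so $|gC \cap S| \le 1$.

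Next I would fix a maximum clique $C$, a maximum coclique $S$, let $G$ be the automorphism group of $X$ (which acts transitively on $V(X)$ by hypothesis), and write $v = |V(X)|$. I would then double-count the pairs $(g,x)$ with $g \in G$, $x \in S$, and $x \in gC$. Grouping by $g$ gives $\sum_{g\in G}|gC\cap S| \le |G|$ by the previous paragraph. Grouping by $x$ instead: the condition $x \in gC$ is equivalent to $gc = x$ for some (necessarily unique) $c \in C$, and by transitivity the set of automorphisms sending a fixed vertex to $x$ is a coset of a point stabilizer, hence of size $|G|/v$; summing over the $|C|$ choices of $c$ yields $|\{g \in G : x \in gC\}| = |C|\,|G|/v$ for each $x \in S$. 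Comparing the two counts gives $|S|\,|C|\,|G|/v \le |G|$, i.e. $\alpha(X)\,\omega(X) = |S|\,|C| \le v$.

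For the equality statement, suppose $|S|\,|C| = v$. Then the inequality above is tight, so $\sum_{g\in G}|gC\cap S| = |G|$ and every summand equals $1$; taking $g$ to be the identity gives $|C \cap S| = \chi_C^{T}\chi_S = 1$. It then only remains to expand, using $\chi_C^{T}\mathbf{1} = |C|$, $\chi_S^{T}\mathbf{1} = |S|$, and $\mathbf{1}^{T}\mathbf{1} = v$,
\[
\Bigl(\chi_C - \tfrac{|C|}{v}\mathbf{1}\Bigr)^{T}\Bigl(\chi_S - \tfrac{|S|}{v}\mathbf{1}\Bigr)
= \chi_C^{T}\chi_S - \tfrac{|C|\,|S|}{v},
\]
which equals $1 - 1 = 0$ after substituting $\chi_C^{T}\chi_S = 1$ and $|C|\,|S| = v$, proving the asserted orthogonality.

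I do not expect a genuine obstacle: this is a classical fact. The only step requiring attention is the double-count, where vertex-transitivity must be invoked to evaluate $|\{g : x \in gC\}|$ exactly rather than merely bounding it, and where one must check the nonemptiness of the relevant coset of a stabilizer. (An alternative is a spectral proof via the ratio bound applied inside the Bose--Mesner-type algebra generated by $G$, but the combinatorial argument is shorter and makes the equality condition immediate.)
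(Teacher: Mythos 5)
The paper offers no proof of this statement: it is quoted directly from Godsil and Meagher's book \cite[p.~26]{GMB} and used as a black box, so there is nothing internal to compare against. Your double-counting argument is correct and is the standard proof of the clique--coclique bound (essentially the one in the cited source): the observation $|gC\cap S|\le 1$, the orbit--stabilizer evaluation $|\{g\in G : x\in gC\}|=|C|\,|G|/v$, the specialization to $g=\mathrm{id}$ in the equality case to get $\chi_C^T\chi_S=1$, and the final expansion reducing orthogonality to $\chi_C^T\chi_S-|C|\,|S|/v=1-1=0$ all check out. The only cosmetic point is that you fix \emph{maximum} $C$ and $S$ at the outset, whereas the count works for an arbitrary clique and coclique; stating it that way makes the equality clause (which concerns any pair achieving $|C|\,|S|=v$) apply without re-running the argument.
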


\begin{theorem}\label{M} 
Suppose that $X$ is a graph in a symmetric association scheme. If there is a clique in $X$ for which equality holds in the clique-coclique bound, then there exists a weighted adjacency matrix of $X$, for which the ratio bound holds with equality.
\end{theorem}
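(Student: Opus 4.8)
The plan is to build the desired weighted adjacency matrix as a suitable linear combination of the association-scheme matrices $A_{2\lambda}$, engineered so that (i) its row sum and (ii) its least eigenvalue are exactly the two quantities that make the Delsarte--Hoffman bound of Theorem~\ref{ratioBound} collapse to $\alpha(X)$. Since $X$ is a graph in the scheme, $A(X)\in\mathbb{C}[\mathcal{A}]$, and every matrix in $\mathbb{C}[\mathcal{A}]$ is diagonalized by the common eigenspaces; let $E_0=\tfrac1v\mathbf J,E_1,\dots,E_m$ be the minimal idempotents, with $v=|V(X)|$. A weighted adjacency matrix $W$ subordinate to $X$ must satisfy $W\circ A(X)=W$ and $W\circ(J-A(X)-I)=0$ (zeros off the edges of $X$) and $W\circ I=0$; within $\mathbb{C}[\mathcal{A}]$ this is a linear constraint, and the free parameters are the eigenvalues $\theta_1,\dots,\theta_m$ of $W$ on $E_1,\dots,E_m$ (the eigenvalue on $E_0$ being forced to be the row sum $d$, which is itself determined once the $\theta_i$ are chosen, via $d=\tfrac1v\sum_i \theta_i\,\mathrm{rank}(E_i)+\theta_0$-type trace identities). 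So the real content is: choose the $\theta_i$ so that $W$ is supported on edges of $X$ and has least eigenvalue $\tau$ with $v/(1-d/\tau)=\alpha(X)$.

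This is where the clique hypothesis enters, and it is the crux of the argument. Let $C$ be a clique in $X$ with $|C|\,\alpha(X)=v$, guaranteed by hypothesis and the Clique--Coclique Bound. The key step is to exploit the rigidity forced by equality in that bound: for an equality clique $C$ in a scheme, the characteristic vector $\chi_C$ has a very constrained projection onto each eigenspace $E_i$ — concretely $\|E_i\chi_C\|^2$ is pinned down by the scheme parameters, and in fact $C$ meets the Delsarte LP bound for cliques, so $\chi_C$ is ``balanced'' across exactly the eigenspaces where the optimal dual solution is tight. The plan is to read off from $C$ the correct candidate value of $\tau$: set $\tau=-d\,|C|/(v-|C|)=-d/(\alpha(X)-1)$, which is exactly the value making the ratio bound equal $\alpha(X)=v/|C|$. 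Then one defines $W$ by prescribing $\theta_i=\tau$ on every eigenspace that $\chi_C$ ``hits'' and $\theta_i$ (still $=\tau$, or chosen freely subject to being $\ge\tau$) on the rest, and one must verify two things: first, that this prescription actually yields a matrix supported on the edges of $X$ (i.e.\ that the forced diagonal and forced off-support entries vanish) — this is where the inner-product identity $\chi_C^T\chi_S=1$ and the structure of $C$ as a clique in $X$, not just in the scheme, get used; and second, that $\tau$ is genuinely the \emph{least} eigenvalue, not merely an eigenvalue, which holds because all other $\theta_i$ are set equal to $\tau$ or the construction is arranged so they cannot go below it.

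I expect the main obstacle to be the first verification: showing that the combination of $A_{2\lambda}$'s dictated by ``eigenvalue $\tau$ on the support of $\chi_C$'' has zero diagonal and is zero off the edges of $X$. The slick way to see this is to write $W$ directly in terms of $C$ rather than eigenspace-by-eigenspace: following the cited approach of~\cite{MR3990672}, take $W=\beta\big(\tfrac1{|C|}\sum_{g}\chi_{gC}\chi_{gC}^T - \gamma I - \delta A(\overline X)\big)$ averaged over the automorphism group $\Sym(2k)$ (which is transitive on vertices and, by edge-transitivity of each $X_{2\lambda}$, behaves well on the scheme), with constants $\beta,\gamma,\delta$ chosen so that the diagonal cancels and the off-edge part cancels; because $C$ is a clique in $X$, the matrix $\sum_g\chi_{gC}\chi_{gC}^T$ already vanishes on the non-edges of $X$ \emph{except} on the diagonal, so a single diagonal correction suffices, and membership in $\mathbb{C}[\mathcal{A}]$ is automatic from $\Sym(2k)$-invariance. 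Once $W$ is exhibited in this closed form, its row sum is immediate and its least eigenvalue is computed from the rank-one pieces; equality in the clique--coclique bound is exactly what forces that least eigenvalue to be the critical $\tau$. The remaining steps — checking $W$ is symmetric with real entries, has constant row sum, and that the ratio bound numerics match $\alpha(X)$ — are routine.
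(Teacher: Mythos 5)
Your third paragraph is, in substance, the paper's proof: averaging $\chi_{gC}\chi_{gC}^T$ over $\Sym(2k)$ is the same as projecting the positive semidefinite rank-one matrix $\chi_C\chi_C^T$ into the Bose--Mesner algebra, which is exactly what the paper does (following the same reference~\cite{MR3990672}); your observations that the clique property makes this projection vanish off the edges of $X$ and that only a diagonal correction $-\frac{|C|}{v}I$ is then needed (so $\delta=0$) both match the paper. The first two paragraphs, by contrast, are not needed and contain a slip: the value $\tau=-d|C|/(v-|C|)$ is not the one that makes the ratio bound return $\alpha(X)=v/|C|$; from $1-d/\tau=|C|$ one gets $\tau=-d/(|C|-1)$, which with the eventual row sum $d=|C|(|C|-1)/v$ equals $-|C|/v$.

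The one substantive gap is in how you conclude that the least eigenvalue actually \emph{equals} the critical value. Positive semidefiniteness of the projected matrix $\widehat{M}$ gives only the one-sided bound $\tau\ge -\frac{|C|}{v}$ for $M=\widehat{M}-\frac{|C|}{v}I$; saying the least eigenvalue ``is computed from the rank-one pieces'' or that clique--coclique equality ``forces'' it does not establish that $\widehat{M}$ is singular. The missing step --- and the place where the equality hypothesis is genuinely used --- is to apply Theorem~\ref{ratioBound} to $M$ itself: since a coclique of size $v/|C|$ exists, $v/|C|=\alpha(X)\le v/(1-d/\tau)$, and rearranging forces $\tau\le -\frac{|C|}{v}$. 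Combining the two inequalities gives $\tau=-\frac{|C|}{v}$ and hence equality in the ratio bound. With that step supplied, your construction coincides with the paper's argument.
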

\begin{proof}
Let $\mathcal{A} = \{A_0, A_1, \dots, A_k\}$ be a symmetric association scheme on $v$ vertices and denote the row sum of $A_i$ by $v_i$. Let $X_{i}$ be the graph associated with $A_{i}$, for $i \in \{0,\dots, k\}$. Then $X$ is a graph such that $X = \bigcup_{i\in T} X_{i}$ for some $T \subset \{1,\dots, k\}$. 

Let $C$ be a clique in $X$ for which the clique-coclique bound holds with equality, so $\alpha(X) = v /|C|$. Let $\chi_C$ be the characteristic vector of $C$. Then $\chi_C^T \chi_C$ is a positive semi-definite (psd) matrix. The projection of this matrix into the Bose-Mesner algebra is the matrix
\begin{equation*}
\widehat{M} = \sum_{i=0}^k \frac{\chi_C^T A_i \chi_C }{v v_i}A_i.
\end{equation*}
The projection of a psd matrix is a psd matrix, so $\widehat{M}$ is again a psd matrix (see~\cite[Lemma 3.2]{MR3990672}). Since $A_0=I$, we have $\frac{\chi_C^T A_0 \chi_C }{v v_0}A_0 = \frac{|C|}{v} I$.

Define $M = \widehat{M} - \frac{|C|}{v} I$. Since $\widehat{M}$ is psd, the minimal eigenvalue for $M$ is at least $ -\frac{|C|}{v}$. Further, since $C$ is a clique in $X$, any two vertices in $C$ will be related in some $A_i$ for $i \in T$, in particular $\chi_C A_j \chi_C =0$ for $j \not \in T$. Thus 
\begin{equation*}
M = \sum_{i=1}^k \frac{\chi_C^T A_i \chi_C }{v v_i}A_i =
\sum_{i \in T} \frac{\chi_C^T A_i \chi_C }{v v_i}A_i
\end{equation*}
and $M$ is a weighted adjacency matrix of $X$. 

The row sum of $\widehat{M}$ can be calculated as follows
\[
\widehat{M} {\bf 1} = \sum_{i=0}^k \frac{\chi_C^T A_i \chi_C }{v v_i}A_i  {\bf 1} 
                              = \frac{1}{v} \sum_{i=0}^k  \chi_C^T A_i \chi_C   {\bf 1} 
                              = \frac{1}{v}  \chi_C^T \left( \sum_{i=0}^k  A_i  \right) \chi_C   {\bf 1} 
                             = \frac{1}{v} \chi_C^T J \chi_C  {\bf 1}
                             = \frac{|C|^2}{v} {\bf 1}.
\]
This implies the row sum of $M$ is 
\begin{equation*}
d= \frac{|C|^2}{v} - \frac{|C|}{v}.
\end{equation*}
We also know the minimum eigenvalue $\tau$ is negative and at least $ -\frac{|C|}{v}$ and the maximum coclique has size $\frac{v}{|C|}$. The ratio bound, applied to $M$ gives
\begin{equation*}
\frac{v}{|C|} = \alpha(X) \leq \frac{v}{1-\frac{d}{\tau}} = \frac{v}{1-\frac{ \frac{|C|^2}{v} - \frac{|C|}{v}}{\tau}}. 
\end{equation*}
Rearranging verifies that $\tau$ is less than or equal to $-\frac{|C|}{v}$. Thus $\tau = -\frac{|C|}{v}$ and equality holds in the ratio bound.
\end{proof} 


In the next theorem we  show that equality in the clique-coclique bound holds for $M_2(2k)$ for infinitely many values of $k$.

\begin{theorem}\label{CliqueApproach}
If $2k=2^{a}+2$, where $a$ is a positive integer, then $\omega(M_{2}(2k)) = (2k-1)(2k-3)$.
\end{theorem}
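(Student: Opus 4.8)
The plan is to prove the two matching inequalities separately: the upper bound is soft, and the construction of a large clique carries the real content. For the bound $\omega(M_2(2k)) \le (2k-1)(2k-3)$, I would invoke the Clique-Coclique Bound. Since $M_2(2k)$ is vertex-transitive, $\omega(M_2(2k))\,\alpha(M_2(2k)) \le |V(M_2(2k))| = (2k-1)!!$. A canonically $2$-intersecting set, namely all perfect matchings containing a fixed pair $T$ of disjoint edges, is a coclique of $M_2(2k)$ of size $(2k-5)!!$, so $\alpha(M_2(2k)) \ge (2k-5)!!$ and therefore $\omega(M_2(2k)) \le (2k-1)!!/(2k-5)!! = (2k-1)(2k-3)$. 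This half uses nothing about difference sets or projective planes.

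For the lower bound I would exhibit a clique of size $(2k-1)(2k-3)$ via a hyperoval. Put $q = 2^a$, so that $2k = q+2$, and work in the projective plane $\mathrm{PG}(2,q)$ developed from the Singer difference set $\mathcal{D}$ as in Lemma~\ref{LinesWithZero}. The oval $\mathcal{O} = -\mathcal{D}$ has $q+1$ points, and because $q$ is even it extends uniquely to a hyperoval $\mathcal{O}^{+} = \mathcal{O}\cup\{N\}$ with $q+2 = 2k$ points, where $N$ is the nucleus (the common point of all tangent lines to $\mathcal{O}$); consequently every line of $\mathrm{PG}(2,q)$ meets $\mathcal{O}^{+}$ in $0$ or $2$ points. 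I would then identify the vertices of $K_{2k}$ with the points of $\mathcal{O}^{+}$, so that each \emph{secant} line (a line through two points of $\mathcal{O}^{+}$) is an edge of $K_{2k}$. For a point $x \notin \mathcal{O}^{+}$, the $q+1$ lines through $x$ partition $\mathcal{O}^{+}$, each secant covering two of its points and each non-secant covering none; since $q$ is even exactly $(q+2)/2 = k$ of them are secants, and any two of these secants meet only at $x \notin \mathcal{O}^{+}$, so they are pairwise disjoint as edges. Hence $M_x := \{\text{secants through } x\}$ is a perfect matching of $K_{2k}$.

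It then remains to check that $\{M_x : x \notin \mathcal{O}^{+}\}$ is a clique of the right size. If $x \ne y$ lie off $\mathcal{O}^{+}$, then $M_x$ and $M_y$ share at most one edge, since two distinct common secants would have to meet in both of the distinct points $x$ and $y$; thus the $M_x$ form a clique in $M_2(2k)$. The same two-lines-meet-in-one-point argument shows $M_x = M_y$ forces $x = y$ (here $k \ge 3 > 1$, so each $M_x$ contains at least two secants), so the matchings $M_x$ are pairwise distinct. Finally, the number of points of $\mathrm{PG}(2,q)$ lying off $\mathcal{O}^{+}$ is $(q^2+q+1) - (q+2) = q^2 - 1 = (2k-3)(2k-1)$, producing a clique of exactly this size; together with the upper bound this proves $\omega(M_2(2k)) = (2k-1)(2k-3)$.

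The step I expect to demand the most care is the start of the third paragraph: confirming that $-\mathcal{D}$ genuinely extends to a hyperoval and locating the nucleus explicitly within the difference-set coordinates of Lemma~\ref{LinesWithZero}, so that the counting claim "every external point lies on exactly $k$ secants, and those secants partition $\mathcal{O}^{+}$" is airtight. The remaining ingredients, the Clique-Coclique Bound and the fact that two distinct points of a projective plane lie on a unique common line, are standard.
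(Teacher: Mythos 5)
Your proposal is correct and is essentially the paper's own argument: the paper's vertex set $\mathcal{O}\cup\{0\}$ is exactly your hyperoval $\mathcal{O}^{+}$ (Lemma~\ref{LinesWithZero}(c) shows every line through $0$ is tangent to $\mathcal{O}$, i.e.\ $0$ is the nucleus), and its perfect matchings indexed by $s\notin\mathcal{O}\cup\{0\}$ are precisely your secant pencils $M_x$. The upper bound via the clique--coclique bound against the canonical coclique of size $(2k-5)!!$ is also the same.
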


\begin{proof}
We  construct a maximum clique of size $(2k-1)(2k-3)$ for the graph $M_{2}(2k)$ where $2k=2^{a}+2$, for every positive integer $a$. This construction is described in the following four steps.

\begin{itemize}
\item[ {\bf Step 1.} ]Using  (\ref{DS}) and (\ref{Lines}) we  construct a Singer difference set $\mathcal{D}= \{d_{1}, d_{2}, \cdots, d_{2^{a}+1} \}$ with parameters $(2^{2a}+2^{a}+1, 2^{a}+1,1)$, and we  construct the lines $\mathcal{L}$ of the corresponding finite projective plane. Let $\mathcal{O} = -D= \{-d_{1}, -d_{2}, \cdots, -d_{2^{a}+1} \}$ be an oval of size $2^{a}+1$ in the projective plane. Note that by the definition of an oval, $\mathcal{O}$ has at most 2 points in common with each line in $\mathcal{L}$.\\

\item[{\bf Step 2.}]Let $K_{2k}$ be the complete graph with its vertices indexed by the elements in $\mathcal{O} \cup \{0\}$. Pick the lines from $\mathcal{L}$ which have at least one element in common with the oval $\mathcal{O}$, call this set $\mathcal{L'}$. For lines in $\mathcal{L}'$ with 2 points in common with the oval $\mathcal{O}$, consider their common pair of points as an edge in $K_{2k}$. Next consider the lines in $\mathcal{L}'$ with only one point in common with $\mathcal{O}$, these exist from Lemma~\ref{LinesWithZero}. The point in common with the oval $\mathcal{O}$ and the zero element on that line form an edge in $K_{2k}$.  Lemma~\ref{LinesWithZero} also indicates that for every $s\in \mathbb{Z}_{2^{2a}+2^{a}+1} \backslash \mathcal{O}$, there exists exactly one line in $\mathcal{L}$ containing both $s$ and zero. In addition, each line that contains 0 also contains exactly one element of $\mathcal{O}$.\\

\item[{\bf Step 3.}]Now consider $s\in \mathbb{Z}_{2^{2a}+2^{a}+1} \backslash \mathcal{O}$. We select the lines in $\mathcal{L'}$ which include $s$. The edges corresponding to the selected lines for $s$ form a perfect matching: by the definition of a finite projective plane every pair of lines has exactly one point in common, and since we are choosing the lines in the subset $\mathcal{L'}$ which contain $s$, they cannot have any other element in common. Hence the corresponding edges we are defining for $s$ are disjoint. Also note that by the argument in Step 2, there is exactly one edge with zero as one of its vertices, and all other selected lines include two elements of $\mathcal{O}$ (the lines in $\mathcal{L}'$ with one element of $\mathcal{O}$ are exactly the lines in $\mathcal{L}$ that include 0). In addition, for every $-d_{i}\in \mathcal{O}$, there is exactly one line that contains both $s$ and $-d_{i}$, which means the edges corresponding to the selected lines for $s$ cover all the elements of the oval $\mathcal{O}$ precisely once.\\

\item[{\bf Step 4.}] By repeating Step 3 for all $s \in \mathbb{Z}_{2^{2a}+2^{a}+1} \backslash \mathcal{O}$, we produce a set of perfect matchings which we denote by $\mathcal{C}$. The size of $\mathcal{C}$ is 
\begin{align*}
|\mathbb{Z}_{2^{2a}+2^{a}+1} \backslash \mathcal{O}| &= \left( 2^{2a}+2^{a}+1 \right) -(2^{a}+2) \\
&= (2^{a}+1)(2^{a}-1) \\
&= (2k-1)(2k-3).
\end{align*}
\end{itemize} 

The set $\mathcal{C}$ is actually a clique in $M_{2}(2k)$: in any clique in $M_{2}(2k)$, a pair of perfect matchings should have at most 1 edge in common. Suppose $PM_{s}$ and $PM_{t}$ are the two perfect matchings in $\mathcal{C}$ corresponding to $s$ and $t$ in $\mathbb{Z}_{2^{2a}+2^{a}+1}-\mathcal{O}$ and assume that they have 2 edges in common, say $e_{1}$ and $e_{2}$. This means that there are lines $\ell_{s_1}$ and  $\ell_{t_1}$ containing $s$ and $t$, respectively, and both $\ell_{s_1}$ and  $\ell_{t_1}$ include the vertices of $e_{1}$. By definition, every pair of lines has exactly one point in common so $\ell_{1}:=\ell_{s_1}=\ell_{t_1}$. Similarly, there is a line $\ell_{2}$ including the points on  $e_{2}$, $s$ and $t$. So the lines $\ell_{1}$ and $\ell_{2}$ have more than one point in common, therefore they are the same, which we label as $\ell$. But then the line $\ell$ contains more than two elements from the oval $\mathcal{O}$, which contradicts the definition of an oval.\\

Finally we note that $\mathcal{C}$ is a maximum clique in $M_{2k}$ since a canonical $2$-intersecting perfect matching is a coclique of size $(2k-5)!!$; hence the equality in the clique-coclique bound holds. 
\end{proof}

We have proved that for infinitely many values of $2k = 2^{a}+2$ we can build a maximum clique in $M_{2}(2k)$, hence we  can construct the matrix $M$ as Theorem~\ref{M} with equality in the ratio bound. The value $2^{a}+2$ grows fast as $a$ increases, which makes it difficult to find the maximum clique, and the values in $M$ by computer. In the next approach we  try to build the matrix $M$ without identifying maximum cliques.

\section{Second Approach }
\label{sec:Second Approach}

In this section we find a set of coefficients ${\bf a_{\lambda_{i}}}$ for the matrices $A_{\lambda_{i}}$ in the association scheme of perfect matchings so that the matrix $M = \sum_{i=1}^{m}{\bf a_{\lambda_{i}}} A_{\lambda_{i}}$ is a weighted adjacency matrix for the graph $M_{2}(2k)$ and the ratio bound holds with equality for the matrix $M$. To verify this, we need to determine the row sum and the least eigenvalue of the matrix $M$.\\

If we have the complete character table of the perfect matching association scheme, then we can easily find the row sum and the least eigenvalue of $M$. The matrices in an association scheme are simultaneously diagonalizable, therefore they have common eigenspaces. This means that the eigenvalue of a linear combination of the matrices $A_{\lambda_{i}}$ corresponding to an eigenspace is actually the same linear combination of the eigenvalues of matrices $A_{\lambda_{i}}$ corresponding to the eigenspace. But finding the complete character table of this scheme for $2k\geq 40$ is still an unsolved problem. In his 1994 paper, Muzychuk~\cite{Mu} studied the eigenvalues of the association scheme of the symmetric group $\Sym(2k)$. The calculations are quite complicated and Muzychuk only found the eigenvalues up to $2k = 10$. More recently in 2018, Srinivasan~\cite{Sri1, Sri2} presented a recursive algorithm to find the character tables up to $2k=40$. 

In this work, we calculate several entries in the character table of the perfect matching association scheme for all values of $2k$. From these eigenvalues we will find an appropriate weighted adjacency matrix. To do this, we calculate the eigenvalues of some carefully chosen \textsl{quotient} graphs.

\subsection{Eigenvalues of perfect matching association scheme}\label{Mod of Char Table}

\begin{definition}\label{quotient}\cite[Section 2.2]{GMB}
Let $\pi = [\pi_{1},\pi_{2}, \ldots, \pi_{i}]$ be a set partition of the vertices of the graph $X$. This partition is \textsl{equitable} if the number of vertices in $\pi_{k}$ that are adjacent to a vertex in $\pi_{\ell}$ is determined only by $k$ and $\ell$, where $k,\ell\in \{1,\ldots, i\}$. If $\pi$ is an equitable partition of $X$, the \textsl{quotient} graph $X/\pi$ is a directed multi-graph with the parts of $\pi$ as its vertices, and if a vertex in $\pi_{\ell}$ has exactly $\nu$ neighbours in $\pi_{k}$, then $X/\pi$ has $\nu$ arcs from $\pi_{\ell}$ to $\pi_{k}$. 
\end{definition}

Quotient graphs are usually represented as a matrix, the rows and columns are index by the parts of the equitable partition and the 
$(\pi_k, \pi_\ell)$-entry is the number of edges from a vertex in $\pi_k$ to $\pi_\ell$.

Consider an integer partition $\lambda = [\lambda_{1},\lambda_{2}, \cdots, \lambda_{i}]$. The orbit partition formed by the Young subgroup $\Sym(\lambda) = \Sym(\lambda_{1}) \times \Sym(\lambda_{2}) \times \cdots \times \Sym(\lambda_{i})$ acting on the set of all perfect matchings of $K_{2k}$ (vertices of $M_{2}(2k)$) is an equitable partition. Hence for every class $\mu$ in the perfect matching association scheme, the quotient graph of $X_{\mu}$, with respect to this orbit partition, is well-defined. We denote this quotient graph by $X_\mu/\pi(\lambda)$. The eigenvalues of $X_\mu/\pi(\lambda)$ are also eigenvalues of the matrix $A_{\mu}$~\cite[p.28]{GMB}. Using this fact, we will construct some quotient graphs for several classes in the perfect matching association scheme to build a portion of the character table in the next subsection.\\

Let $A_{\lambda}$ be one of the matrices in the perfect matching association scheme. Let $\Sym(\lambda)$ be a Young subgroup and $A_{\ell}/\pi(\lambda)$ represent the corresponding quotient graph. Any eigenvector $\nu'$ of the quotient graph can be \textsl{lifted} to form an eigenvector $\nu$ for $A_{\ell}$ (the $P$-entry of $\nu$ is equal to the entry of $\nu'$ corresponding to the part that contains $P$). The groups $\Sym(\lambda)$ and $\Sym(n)$ both act on the cosets of $\Sym(n)/\left( \Sym(2) \wr \Sym(k) \right)$, and thus also act on the vector $\nu$ by permuting the indices. Since the entries of $\nu$ are constant on the orbits of $\Sym(\lambda)$, the vector $\nu$ is unchanged by the action of $\Sym(\lambda)$. Define
\[
V = \spanof\{ \sigma \nu : \sigma \in \Sym(n) \}.
\]
In particular, $V$ is the $\Sym(n)$-module generated by the action of $\Sym(n)$ on $\nu$. 

For two integer partitions $\mu =[\mu_1,\dots, \mu_\ell]$ and $\lambda=[\lambda_1,\dots, \lambda_k]$ of $n$, we say that $\mu \geq \lambda$ in the \textsl{dominance ordering} if $\mu_j = \lambda_j$, for all $j<i$ and $\mu_i > \lambda_i$ for some 
$i \in\{1,\dots, \min\{k,\ell\}\}$. We will use $\phi_\lambda$ to denote the character of $\Sym(n)$ associated to the partition $\lambda$.

The decomposition of the representation of $\Sym(n)$ induced by the trivial representation on a Young subgroup is well-known.

\begin{theorem}\label{Ind.Decomp.1}\cite[Chapter 12]{GMB}
If $\lambda \vdash n$, then
\[
ind_{\Sym(n)}(1_{\Sym(\lambda)}) = \phi_{\lambda}+\sum_{\mu>\lambda}^{}K_{\mu \lambda}\phi_{\mu},
\]
where $K_{\mu \lambda}$ is the \textsl{Kostka number} (see \cite[Section 12.5]{GMB} for more information).
\end{theorem}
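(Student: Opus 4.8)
The plan is to recognize this as Young's rule and to prove it by transporting the statement, via the Frobenius characteristic map, into an identity among symmetric functions. Recall that the characteristic map $\mathrm{ch}$ is an isometry from the space of virtual characters of $\Sym(n)$ (with the standard inner product) onto the degree-$n$ part of the ring of symmetric functions (with the Hall inner product); it sends the irreducible character $\phi_\mu$ to the Schur function $s_\mu$, and it sends the character of the permutation module $\mathrm{ind}_{\Sym(n)}(1_{\Sym(\lambda)})$ — the module on the cosets of the Young subgroup $\Sym(\lambda)$ — to the complete homogeneous symmetric function $h_\lambda = h_{\lambda_1} h_{\lambda_2} \cdots$. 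So the theorem is equivalent to the expansion $h_\lambda = \sum_\mu K_{\mu\lambda}\, s_\mu$, together with the facts that $K_{\lambda\lambda} = 1$ and $K_{\mu\lambda} = 0$ unless $\mu$ dominates $\lambda$.

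For the expansion I would use that $\{h_\lambda\}$ and the monomial symmetric functions $\{m_\lambda\}$ are dual bases for the Hall inner product, together with the combinatorial definition $s_\mu = \sum_\nu K_{\mu\nu} m_\nu$, where $K_{\mu\nu}$ counts semistandard Young tableaux of shape $\mu$ and content $\nu$. These give $\langle h_\lambda, s_\mu\rangle = K_{\mu\lambda}$, and since the $s_\mu$ are orthonormal, $h_\lambda = \sum_\mu \langle h_\lambda, s_\mu\rangle\, s_\mu = \sum_\mu K_{\mu\lambda}\, s_\mu$. Pulling back through $\mathrm{ch}^{-1}$ yields a decomposition of $\mathrm{ind}_{\Sym(n)}(1_{\Sym(\lambda)})$ into irreducibles (Specht modules, which over $\mathbb{C}$ are exactly the irreducibles) with $\phi_\mu$ occurring with multiplicity $K_{\mu\lambda}$.

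It remains to control the support of the sum, which is the combinatorial core. In a semistandard tableau of shape $\mu$ and content $\lambda$, column-strictness forces all entries $\leq i$ to lie in the first $i$ rows, so counting cells gives $\mu_1 + \cdots + \mu_i \geq \lambda_1 + \cdots + \lambda_i$ for every $i$; hence $\mu$ dominates $\lambda$, and in particular $\mu \geq \lambda$ in the ordering of the theorem. Thus $K_{\mu\lambda} = 0$ outside the stated range, and separating the diagonal term gives $\phi_\lambda + \sum_{\mu > \lambda} K_{\mu\lambda}\, \phi_\mu$. When $\mu = \lambda$ the inequalities above are all equalities, which forces row $i$ to consist entirely of the entry $i$; this is the unique such tableau, so $K_{\lambda\lambda} = 1$, completing the argument.

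The step I expect to be the real obstacle — or at least the one requiring the most machinery — is the identification under $\mathrm{ch}$ of the character of $\mathrm{ind}_{\Sym(n)}(1_{\Sym(\lambda)})$ with $h_\lambda$, which depends on the fact that $\mathrm{ch}$ intertwines induction of outer tensor products with multiplication of symmetric functions, ultimately resting on the Frobenius character formula. If one prefers to avoid that, an alternative is the module-theoretic proof inside the permutation module $M^\lambda$ on $\lambda$-tabloids: use James's submodule theorem to show that $\mathrm{Hom}_{\Sym(n)}(S^\mu, M^\lambda)$ vanishes unless $\mu$ dominates $\lambda$ and is one-dimensional when $\mu = \lambda$, then build an explicit basis of \emph{semistandard homomorphisms} indexed by semistandard tableaux of shape $\mu$ and content $\lambda$ to compute the remaining multiplicities. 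Either route bottoms out at the same cell-counting lemma, so that lemma is the genuine content.
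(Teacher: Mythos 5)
Your proposal is correct, and it is the standard proof of Young's rule. Note, however, that the paper does not prove this statement at all: it is quoted as a known result with a citation to Chapter~12 of Godsil--Meagher, so there is no in-paper argument to compare against. Your route --- transporting the statement through the Frobenius characteristic map to the expansion $h_\lambda = \sum_\mu K_{\mu\lambda} s_\mu$, obtained from the duality of $\{h_\lambda\}$ and $\{m_\lambda\}$ together with $s_\mu = \sum_\nu K_{\mu\nu} m_\nu$, and then controlling the support by the cell-counting argument that column-strictness forces all entries $\leq i$ of a semistandard tableau into the first $i$ rows --- is sound, and your identification of that counting lemma as the genuine content is accurate. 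One small point worth flagging: you prove the vanishing of $K_{\mu\lambda}$ outside the \emph{dominance} order $\mu \trianglerighteq \lambda$, whereas the ordering the paper actually defines in the surrounding text (``$\mu_j = \lambda_j$ for $j<i$ and $\mu_i > \lambda_i$'') is the lexicographic order; since dominance implies lexicographic precedence for partitions of the same integer, your conclusion is strictly stronger than the paper's stated form and subsumes it, so nothing is lost.
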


The decomposition of the representation of $\Sym(n)$ induced by the trivial representation on $Sym(2)\wr \Sym(k)$ is also well-known.

\begin{lemma}\label{Ind.Decomp.2}\cite[Chapter 12]{GMB}
For integers $n$ and $k$ with $n\geq 2k$,
\[
ind_{\Sym(n)}(1_{\Sym(2)\wr \Sym(k)}) = \sum_{\lambda \vdash k} \phi_{2\lambda}.
\]
\end{lemma}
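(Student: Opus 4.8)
The plan is to pass from representations of $\Sym(2k)$ to symmetric functions via the Frobenius characteristic map $\operatorname{ch}$, which sends the irreducible character $\phi_\mu$ to the Schur function $s_\mu$ and the trivial character of $\Sym(m)$ to the complete homogeneous symmetric function $h_m$. (Here one should read $n = 2k$: the right-hand side $\sum_{\lambda\vdash k}\phi_{2\lambda}$ only makes sense as a sum of characters of $\Sym(2k)$, so that is the content of the statement.) The first step is the standard fact that, under $\operatorname{ch}$, induction of the trivial character from a wreath product becomes \emph{plethysm}:
\[
\operatorname{ch}\bigl(\operatorname{ind}_{\Sym(2k)}(1_{\Sym(2)\wr \Sym(k)})\bigr) = h_k[h_2].
\]
When $h_2$ is written in its monomial expansion this is essentially the definition of plethysm (see, e.g., Macdonald's treatment of wreath products). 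Thus the lemma is equivalent to the symmetric-function identity $h_k[h_2] = \sum_{\lambda\vdash k} s_{2\lambda}$.

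To prove the latter I would pass to generating functions. Writing $h_2 = \sum_{i\le j} x_i x_j$ and substituting plethystically into $\sum_{k\ge 0} h_k = \prod_i (1-x_i)^{-1}$ (plethystic substitution of a sum of monomials replaces the alphabet by those monomials, so the product becomes the product over the new letters) yields
\[
\sum_{k\ge 0} h_k[h_2] = \prod_{i\le j} \frac{1}{1-x_i x_j}.
\]
On the other side, $\sum_{k\ge 0}\sum_{\lambda\vdash k} s_{2\lambda} = \sum_{\mu} s_\mu$, summed over all partitions $\mu$ whose parts are all even. Littlewood's classical identity $\prod_{i\le j}(1-x_ix_j)^{-1} = \sum_{\mu\ \mathrm{even}} s_\mu$ then finishes the argument: comparing degree-$2k$ homogeneous components (the even partitions of $2k$ being exactly the $2\lambda$ with $\lambda\vdash k$) gives $h_k[h_2] = \sum_{\lambda\vdash k} s_{2\lambda}$, and applying $\operatorname{ch}^{-1}$ recovers the lemma.

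The only non-formal ingredient is Littlewood's identity; the rest is bookkeeping with $\operatorname{ch}$. That identity may be quoted (it is classical --- see Macdonald's book on symmetric functions, or the reference \cite[Chapter 12]{GMB} cited for this lemma), or proved from the Cauchy identity $\prod_{i,j}(1-x_iy_j)^{-1} = \sum_\mu s_\mu(x)s_\mu(y)$ by a sign-reversing involution, or obtained from the Robinson--Schensted--Knuth correspondence restricted to symmetric nonnegative-integer matrices, with the parity condition dropping out of a count of diagonal entries. This is where I would expect to spend effort if a fully self-contained proof were wanted.

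An alternative route, better suited to the machinery already in place here, is the following: the Bose--Mesner algebra $\cC[\mathcal{A}]$ is commutative, so $(\Sym(2k),\Sym(2)\wr \Sym(k))$ is a Gelfand pair and $\operatorname{ind}_{\Sym(2k)}(1_{\Sym(2)\wr \Sym(k)})$ is multiplicity-free. Together with the dimension count $\sum_{\lambda\vdash k} \dim\phi_{2\lambda} = (2k-1)!! = [\Sym(2k):\Sym(2)\wr \Sym(k)]$ (itself obtained by applying Robinson--Schensted to fixed-point-free involutions), it then suffices to show that each $\phi_{2\lambda}$ actually occurs, i.e., by Frobenius reciprocity, that the Specht module $S^{2\lambda}$ contains a nonzero $\Sym(2)\wr \Sym(k)$-invariant vector. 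Exhibiting that invariant --- essentially the zonal-spherical-function theory of this pair --- is the crux of the second approach and is comparable in difficulty to Littlewood's identity, so in practice the symmetric-function argument above is the one I would carry out.
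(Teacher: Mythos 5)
Your proof is correct. Note that the paper itself offers no proof of this lemma at all --- it is quoted directly from \cite[Chapter 12]{GMB} --- so what you have written is genuinely additional content rather than a variant of an internal argument. Your main route is the standard symmetric-function proof: the identification $\mathrm{ch}\bigl(\ind_{\Sym(2k)}(1_{\Sym(2)\wr \Sym(k)})\bigr)=h_k[h_2]$, the plethystic generating function $\sum_{k\ge 0}h_k[h_2]=\prod_{i\le j}(1-x_ix_j)^{-1}$, and Littlewood's identity $\prod_{i\le j}(1-x_ix_j)^{-1}=\sum_{\mu\ \mathrm{even}}s_\mu$ are all correctly invoked, and extracting the degree-$2k$ component does yield exactly $\sum_{\lambda\vdash k}s_{2\lambda}$, since the even partitions of $2k$ are precisely the $2\lambda$ with $\lambda\vdash k$. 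Quoting Littlewood's identity is legitimate; it is classical and is the only non-formal input. Your alternative sketch via the Gelfand-pair property is actually the one closest in spirit to the machinery this paper uses elsewhere (the commutativity of the Bose--Mesner algebra of the perfect matching scheme is exactly the multiplicity-freeness), but as you say it still requires identifying which constituents occur. One small caution on that route: Robinson--Schensted sends fixed-point-free involutions of $\{1,\dots,2k\}$ to shapes all of whose \emph{columns} have even length, so the dimension count $\sum_{\lambda\vdash k}\dim\phi_{2\lambda}=(2k-1)!!$ needs the extra step of transposing via $\dim\phi_\mu=\dim\phi_{\mu'}$. This does not affect your primary argument, which is complete as written.
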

Our plan is to use the Young subgroup to form the quotient graphs. The eigenvalues of the quotient graphs  belong to the modules that are in both decompositions in Theorem~\ref{Ind.Decomp.1} and Lemma~\ref{Ind.Decomp.2}.

\begin{theorem}\label{module}
Assume that $\Sym(n)$ acts on the set $\Omega$, and that $A$ is the adjacency matrix for an orbital of the action of $\Sym(n)$ on $\Omega$. 
Let $\lambda \vdash n$ and $\pi$ be the orbit partition from the action of $\Sym(\lambda)$ on $\Omega$. If $\eta$ is an eigenvalue of the quotient graph $A/\pi$, then $\eta$ is an eigenvalue of $A$. Moreover, $\eta$ belongs to some $\Sym(n)$-module represented by the partition $\mu$ where $\mu \geq \lambda$ in the dominance ordering.
\end{theorem}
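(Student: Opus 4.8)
The plan is to assemble the ingredients already laid out in the discussion preceding the statement. The first assertion — that $\eta$ is an eigenvalue of $A$ — is the standard fact that eigenvalues of an equitable quotient lift: if $\nu'$ is an eigenvector of $A/\pi$ with eigenvalue $\eta$, let $\nu$ be its lift to $\Omega$, so that the entry of $\nu$ at a point $x\in\pi_j$ equals $\nu'_j$. Equitability of $\pi$ forces $(A\nu)_x$ to depend only on the part $\pi_j$ containing $x$, and a short computation identifies it with $\eta\nu'_j=\eta\nu_x$; hence $A\nu=\eta\nu$ and $\nu\neq 0$ (this is \cite[p.~28]{GMB}). It remains to locate $\eta$ among the $\Sym(n)$-modules.

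For the second assertion I would work with the cyclic $\Sym(n)$-module $V=\spanof\{\sigma\nu:\sigma\in\Sym(n)\}$ introduced above. Two observations drive everything. First, since $A$ is the adjacency matrix of an orbital of the $\Sym(n)$-action, the diagonal action of $\Sym(n)$ fixes the arc set setwise, so the permutation matrices $P_\sigma$ representing $\Sym(n)$ on $\cC[\Omega]$ satisfy $P_\sigma A=AP_\sigma$; consequently $A(\sigma\nu)=\sigma(A\nu)=\eta\,\sigma\nu$ for every $\sigma$, and therefore $V$ lies entirely inside the $\eta$-eigenspace of $A$. Second, because $\nu$ is constant on the orbits of $\Sym(\lambda)$ it is fixed by $\Sym(\lambda)$, so the $\Sym(n)$-equivariant surjection $\cC[\Sym(n)]\to V$, $g\mapsto g\nu$, factors through $\cC[\Sym(n)/\Sym(\lambda)]\cong\ind_{\Sym(n)}(1_{\Sym(\lambda)})$. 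Thus $V$ is a nonzero homomorphic image of $\ind_{\Sym(n)}(1_{\Sym(\lambda)})$, and since $\cC[\Omega]$ is semisimple (Maschke), every irreducible constituent of $V$ is a constituent of $\ind_{\Sym(n)}(1_{\Sym(\lambda)})$.

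Now apply Theorem~\ref{Ind.Decomp.1}: the irreducible constituents of $\ind_{\Sym(n)}(1_{\Sym(\lambda)})$ are exactly the $\phi_\mu$ with nonzero Kostka number $K_{\mu\lambda}$, all of which satisfy $\mu\geq\lambda$ in the dominance ordering. Since $V\neq 0$, it contains at least one irreducible constituent, indexed by some such $\mu$; that copy of $\mu$ sits inside the $\eta$-eigenspace of $A$, so the $\mu$-isotypic component of $\cC[\Omega]$ meets the $\eta$-eigenspace nontrivially. This is exactly the statement that $\eta$ belongs to the $\Sym(n)$-module represented by $\mu$, with $\mu\geq\lambda$, completing the proof.

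The only genuinely delicate point is the middle paragraph. That $A$ commutes with the $\Sym(n)$-action is immediate once one records that $A$ is an orbital matrix, but the step that a module generated by a $\Sym(\lambda)$-fixed vector has all of its constituents among those of $\ind_{\Sym(n)}(1_{\Sym(\lambda)})$ — essentially a Frobenius-reciprocity / universal-property observation — should be stated carefully rather than waved through. Everything after that is bookkeeping with the decomposition in Theorem~\ref{Ind.Decomp.1}.
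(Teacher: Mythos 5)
Your proof is correct and follows essentially the same route as the paper's: lift the eigenvector $\nu'$ to an $\eta$-eigenvector $\nu$ of $A$, form the $\Sym(n)$-module $V$ generated by $\nu$, and compare its irreducible constituents with the decomposition of $\ind_{\Sym(n)}(1_{\Sym(\lambda)})$ in Theorem~\ref{Ind.Decomp.1}. Your write-up is in fact slightly more careful at the two points the paper glosses over --- you justify that $V$ lies inside the $\eta$-eigenspace via the commutation $P_\sigma A = A P_\sigma$ coming from $A$ being an orbital matrix, and you correctly present $V$ as a (semisimple) homomorphic image of the induced module rather than asserting an isomorphism with it --- but neither point changes the underlying argument.
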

\begin{proof}
Let $\pi$ be the orbit partition of $\Sym(\lambda)$ acting on $\Omega$. Assume that $\nu'$ is an eigenvector of the quotient graph $A/\pi$ with eigenvalue $\eta$. The vector $\nu'$ can be lifted to an $\eta$-eigenvector of $A$, which we  denote by $\nu$.\\

The group $\Sym(n)$ and $\Sym(\lambda)$ both act on $\Omega$ and thus also act on the vector $\nu$ by permuting the entries. Since the entries of $\nu$ are constant on the orbits of $\Sym(\lambda)$, the vector $\nu$ is unchanged by the action of $\Sym(\lambda)$. Define a vector space $W$ to be the span of the vector $\nu$, so $\Sym(\lambda)$ fixes every element in $W$.\\

Set $V = \oplus_{\sigma \in \Sym(n)}  \sigma W$. Then $V$ is isomorphic to the module for the induced representation $\ind_{\Sym(n)}(1_{\Sym(\lambda)}) = \phi_{\lambda}+\sum_{\mu>\lambda}K_{\mu \lambda}\phi_{\mu}$. Clearly the vector $\nu \in V$ and since $\nu$ is an $\eta$-eigenvector there is a $\mu \geq \lambda$ so that the $\mu$-module is a subspace of the $\eta$-eigenspace.
\end{proof}

\begin{example}\label{ex:ep}
Consider the matrix $A_{[2k-4,4]}$ in the perfect matching association scheme. Note that in the graph corresponding to this matrix, two perfect matchings are adjacent if their union forms a $4$-cycle and a $(2k-4)$-cycle. The following matrix is the quotient graph corresponding to the group $\Sym(2k-2)\times \Sym(2)$. Denote this quotient graph by $A(X_{[2k-4,4]})/\pi([2k-2,2])$ (in this notation the first integer partition is the class in the perfect matching association scheme, the second partition is the Young subgroup used to form the partition of the vertices in the graph).
\[
\renewcommand*{\arraystretch}{1.5}
A(X_{[2k-4,4]})/[2k-2,2] =\left[
\begin{array}{c|c}
\bf{0} & k(k-1)(2k-6)!! \\ \hline
\frac{1}{2}k(2k-6)!! & \frac{1}{2}k(2k-3)(2k-6)!!
\end{array}\right].
\]

\vspace{0.5cm}
For the matrix $A(X_{[2k-4,4]})/\pi([2k-2,2])$ the all-ones vector $\bf{1}$ is an eigenvector corresponding to the largest eigenvalue, 
$k(k-1)(2k-6)!!$. This eigenvalue is actually the degree of $A_{[2k-4,4]}$, and by Theorem~\ref{module}, this eigenvalue corresponds to the $[2k]$-module in the character table. It is well-known that the trace of a matrix is equal to the sum of its eigenvalues, so by subtracting the degree from the trace, we find the second eigenvalue of this matrix which is $-\frac{1}{2}k(2k-6)!!$. Using Theorem~\ref{module}, and noting that the degree eigenvalue belongs to the $[2k]$-module, it is easy to deduce that the second eigenvalue belongs to the $[2k-2,2]$-module.\\

The quotient graph of $A_{[2k-4,4]}$ with the orbit partition formed by the action of the group $\Sym(2k-4)\times \Sym(4)$ on the perfect matchings is the following
\[
\renewcommand*{\arraystretch}{1.5}
\left[
\begin{array}{c|c|c}
2(2k-6)!! & (2k-4)!! & (k-1)(k-2)(2k-6)!! \\ \hline
\frac{1}{2}(2k-6)!! & \frac{1}{2}(5k-2)(2k-6)!! & \frac{1}{2}(2k^{2}-7k+1)(2k-6)!!\\ \hline
\frac{3}{2}(k-1)(2k-8)!! & 3(2k^{2}-7k+1)(2k-8)!! & (2k^{3}-14k^{2}+\frac{51}{2}k-\frac{3}{2})(2k-8)!!\\
\end{array}\right].
\vspace*{0.4cm}
\]
Denote this matrix by $A(X_{[2k-4,4]})/\pi([2k-4,4])$. This matrix yields the eigenvalues of $A_{[2k-4,4]}$ belonging to the modules $[2k]$, $[2k-2,2]$, and $[2k-4,4]$. From the matrix $A(X_{[2k-4,4]})/\pi([2k-2,2])$ we already have two eigenvalues of the matrix $A(X_{[2k-4,4]})/ \pi([2k-4,4])$, those corresponding to the modules $[2k]$ and $[2k-2,2]$. Hence by subtracting these two eigenvalues from the trace, we find a third eigenvalue of $A_{[2k-4,4]}$, the one corresponding to the module $[2k-4,4]$, and it is equal to $\frac{1}{2}(7k-15)(2k-8)!!$.\\
\end{example}

By finding several quotient graphs for the classes $[2k]$, $[2k-2,2]$, $[2k-4,4]$, and $[2k-6,6]$ in the perfect matching association scheme, we  construct a portion of the character table for the association scheme on the perfect matchings for $K_{2k}$ for any $k\geq 6$. By using Theorem~\ref{module} and the dominance ordering recursively to define the quotient graphs of each class in this association scheme, we can determine the eigenvalues that belong to some of the modules in the character table. These results are recorded in Table~\ref{CharTable}.
\FloatBarrier
\begin{sidewaystable}
\begin{centering}
\begin{tabular}[h]{N| m{1.4cm} ? m{1.8cm} | m{3.4cm} | m{4.4cm} | c | m{3.9cm}| m{0.4cm} |}
\hline
\rule{0pt}{20pt}& ~ & \textcolor{Red}{$[2k]$} & \textcolor{Red}{$[2k-2,2]$} & \textcolor{Red}{$[2k-4,4]$} & $[2k-4,2,2]$ & $[2k-6,6]$ & $\cdots$ \\[4ex]
\Xhline{5\arrayrulewidth}
\rule{0pt}{20pt}& $\mathbf{\chi}_{[2k]}$ & $\frac{(2k)!!}{2k}$ & $\frac{(2k)!!}{2(2k-2)}$ & $\frac{(2k)!!}{4(2k-4)}$ & $\frac{(2k)!!}{8(2k-4)}$ & $\frac{(2k)!!}{6(2k-6)}$ & $\cdots$ \\[4ex]
\hline
 
\rule{0pt}{20pt}& $\chi_{[2k-2,2]}$ & $-(2k-4)!!$ & $\frac{(2k-4)!!}{2}$ & $\frac{-2k(2k-6)!!}{4}$ & \textcolor{Gray}{$?$} & $\frac{-2k(2k-4)!!}{6(2k-6)}$ & $\cdots$ \\[4ex]
  
\rule{0pt}{20pt}& $\chi_{[2k-4,4]}$ & $-(2k-6)!!$ & $-(5k-12)(2k-8)!!$ & $\frac{(7k-15)(2k-8)!!}{2}$ & \textcolor{Gray}{$?$} & $\frac{-2k(2k-6)!!}{6(2k-6)}$ & $\cdots$ \\[4ex]
 
\rule{0pt}{20pt}& $\chi_{[2k-4,2,2]}$ & $2(2k-6)!!$ & $-(2k-6)!!$ & $\frac{-(2k-6)!!}{2}$& \textcolor{Gray}{$?$} & $\frac{4k(2k-6)!!}{6(2k-6)}$ & $\cdots$ \\[4ex]

\rule{0pt}{20pt}& $\chi_{[2k-6,6]}$ & $-3(2k-8)!!$ & $-3(3k-10)(2k-10)!!$ & $-3(9k^{2}-71k+140)(2k-12)!!$ & \textcolor{Gray}{$?$} & $6(5k^{2}-38k+70)(2k-12)!!$ & $\cdots$ \\[4ex]

\rule{0pt}{20pt}&  \vdots & \vdots & \vdots & \vdots & \textcolor{Gray}{$\vdots$} & \vdots  & ~ \\[4ex]
\hline
\end{tabular}
\caption{Character table of the perfect matching association scheme}
\label{CharTable}
\end{centering}
\end{sidewaystable}
\FloatBarrier

We prove another result beyond Theorem~\ref{module} that will be used later in this work.
For a set $S \subset \{1, \dots ,2k \}$ of size four, consider the set of all perfect matchings that include two edges $e$ and $e'$ such that $e$ and $e'$ form a set partition of $S$.
This set is the first part in the equitable partition formed by the action of $\Sym([2k-4,4])$ on the perfect matchings given in Example~\ref{ex:ep}.
For example, if $S = \{1,2,3,4\}$ the characteristic vector of this set is 
\[
w_{\{1,2,3,4\}} = \nu_{\{1,2\}, \{3,4\} } +\nu_{\{1,3\}, \{2,4\} } + \nu_{\{1,4\}, \{2,3\} }.
\]
In the next theorem we use $w_S$ to denote the characteristic vector for the set of perfect matchings in which an arbitrary 4-set $S$ is contained in only 2 edges.

\begin{theorem}\label{thm:4sets}
The set $\{ w_S \, |\, S \subset \{1,2,\dots,2k\} \textrm{ with } |S|=4\}$ is a spanning set for the $\Sym(2k)$ module $\spanof \{ [2k], [2k-2,2], [2k-4,4]\}$.
\end{theorem}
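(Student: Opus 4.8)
The plan is to realise $U:=\spanof\{w_S : S\subseteq\{1,\dots,2k\},\ |S|=4\}$ as a $\Sym(2k)$-submodule of the perfect matching module $V$ (the $(2k-1)!!$-dimensional $\Sym(2k)$-module of functions on perfect matchings of $K_{2k}$), to pin down which irreducible constituents it can possibly contain, and then to exhibit explicit vectors lying in $U$ which witness that each of $[2k]$, $[2k-2,2]$, $[2k-4,4]$ genuinely occurs. Throughout I would assume $k\ge 4$, so that $[2k-4,4]$ is an honest partition distinct from $[2k-2,2]$; the case $k=3$ is degenerate (there $w_S=\nu_{\{1,\dots,2k\}\setminus S}$ and $[2k-4,4]=[2k-2,2]$) and is settled by direct inspection.

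First I would observe that $\sigma\,w_S=w_{\sigma(S)}$ for $\sigma\in\Sym(2k)$, so $U$ is $\Sym(2k)$-invariant and is the image of the $\Sym(2k)$-equivariant map $e_S\mapsto w_S$ from the permutation module on $4$-subsets, namely $\ind_{\Sym(2k)}(1_{\Sym([2k-4,4])})$. By Theorem~\ref{Ind.Decomp.1} (Young's rule) this module is multiplicity-free with constituents $\phi_\mu$ running over the partitions $\mu$ with $K_{\mu,[2k-4,4]}\neq 0$, which are exactly $[2k]$, $[2k-1,1]$, $[2k-2,2]$, $[2k-3,3]$, $[2k-4,4]$. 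Hence $U$ is isomorphic to a direct sum of a subset of these five irreducibles. Since $U\subseteq V$ and, by Lemma~\ref{Ind.Decomp.2}, $V$ is a multiplicity-free sum of the modules indexed by the even partitions of $2k$ --- so $V$ has no constituent isomorphic to $[2k-1,1]$ or $[2k-3,3]$, and exactly one copy each of $[2k]$, $[2k-2,2]$, $[2k-4,4]$ --- semisimplicity forces $U\subseteq [2k]\oplus[2k-2,2]\oplus[2k-4,4]$ inside $V$.

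The remaining work is the reverse inclusion, which I would handle one module at a time. For $[2k]$: each perfect matching is counted once for every choice of two of its edges, so $\sum_{|S|=4}w_S=\binom{k}{2}\mathbf{1}\neq 0$, and $[2k]\subseteq U$. For $[2k-2,2]$: fix an edge $\{a,b\}$ and sum $w_{\{a,b\}\cup\{c,d\}}$ over the pairs $\{c,d\}$ disjoint from $\{a,b\}$; splitting a matching $M$ according to whether $\{a,b\}\in M$ yields the identity $\sum_{\{c,d\}} w_{\{a,b\}\cup\{c,d\}}=(k-2)\,\nu_{\{a,b\}}+\mathbf{1}$, so $\nu_{\{a,b\}}\in U$ for every edge (using $k\ge 3$), whence $\spanof\{\nu_e: e\text{ an edge}\}\subseteq U$; running the argument of the previous paragraph with $\ind_{\Sym(2k)}(1_{\Sym([2k-2,2])})$ in place of $\ind_{\Sym(2k)}(1_{\Sym([2k-4,4])})$, together with $\sum_e\nu_e=k\mathbf{1}\neq 0$ and $\nu_{\{1,2\}}\neq\nu_{\{1,3\}}$, identifies $\spanof\{\nu_e\}=[2k]\oplus[2k-2,2]$, so $[2k-2,2]\subseteq U$. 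For $[2k-4,4]$: by the classification $U$ is a sub-sum of $[2k]\oplus[2k-2,2]\oplus[2k-4,4]$ that already contains $[2k]\oplus[2k-2,2]=\spanof\{\nu_e\}$, so it suffices to show $w_{\{1,2,3,4\}}\notin\spanof\{\nu_e\}$. If $w_{\{1,2,3,4\}}(M)=\sum_{e\in M}c_e$ held for all $M$, then for any matching $M$ with $\{3,4\},\{5,6\}\in M$ and the matching $M'$ obtained from it by the $2$-switch replacing $\{3,4\},\{5,6\}$ with $\{3,5\},\{4,6\}$, one would get $[\{1,2\}\in M]=w_{\{1,2,3,4\}}(M)-w_{\{1,2,3,4\}}(M')=c_{\{3,4\}}+c_{\{5,6\}}-c_{\{3,5\}}-c_{\{4,6\}}$ for every such $M$; this is absurd, since the right-hand side does not depend on the part of $M$ outside $\{3,4,5,6\}$, while (for $2k\ge 8$) the left-hand side equals $1$ when $\{1,2\}\in M$ and $0$ when $M\supseteq\{\{1,7\},\{2,8\}\}$. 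Hence $U\supsetneq[2k]\oplus[2k-2,2]$, and so $U=[2k]\oplus[2k-2,2]\oplus[2k-4,4]$.

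I expect the last step to be where the real content lies. Everything else is bookkeeping with Young's rule and the even-partition decomposition of $V$, but one genuinely has to certify that the vectors $w_S$ ``see'' the $[2k-4,4]$ component and do not collapse into the span of the single-edge vectors $\nu_e$; the observation that every vector in $\spanof\{\nu_e\}$ has a $2$-switch difference depending only on the four switched vertices, whereas $w_{\{1,2,3,4\}}$ does not, is what makes this transparent, and is the one place where a genuinely new (but elementary) computation is needed.
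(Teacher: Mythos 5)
Your proof is correct, but it takes a genuinely different route from the paper's. The paper establishes the inclusion $w_S \in \spanof\{[2k],[2k-2,2],[2k-4,4]\}$ by writing $w_S$ as the lift of $(1,0,0)$ from the $3\times 3$ quotient matrix $A(X_{[2k-4,4]})/\pi([2k-4,4])$ and invoking Theorem~\ref{module}, and it establishes spanning by a rank computation: the Gram matrix $N^TN$ lies in the Johnson scheme $J(2k,4)$, and counting the multiplicity of its zero eigenvalue shows $\rk N$ equals the dimension of the three modules. You instead observe that $U=\spanof\{w_S\}$ is the image of a $\Sym(2k)$-equivariant map from $\ind_{\Sym(2k)}(1_{\Sym([2k-4,4])})$, so by Theorem~\ref{Ind.Decomp.1} its constituents lie among $[2k-j,j]$, $0\le j\le 4$, each with multiplicity one, and intersecting with the even-partition (multiplicity-free) decomposition of Lemma~\ref{Ind.Decomp.2} gives the upper bound; you then certify that each of the three modules actually occurs via explicit vectors --- $\sum_S w_S=\binom{k}{2}\mathbf{1}$, the identity $\sum_{\{c,d\}}w_{\{a,b\}\cup\{c,d\}}=(k-2)\nu_{\{a,b\}}+\mathbf{1}$ putting every $\nu_e$ in $U$, and the $2$-switch computation showing $w_{\{1,2,3,4\}}\notin\spanof\{\nu_e\}$ --- after which multiplicity-freeness forces $U$ to be the full sum. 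Your argument buys self-containedness (no quotient-matrix eigenvector lifting and no Johnson-scheme eigenvalue facts are needed) and makes transparent \emph{why} exactly these three modules appear; the paper's Gram-matrix computation buys an explicit, checkable rank formula and fits the quotient-graph machinery used throughout Section~\ref{Mod of Char Table}. Two small caveats on your write-up: your final $2$-switch step requires $2k\ge 8$, so you are right to treat $k=3$ separately (the paper implicitly only uses the result for $k\ge 4$); and the claim $\spanof\{\nu_e\}=[2k]\oplus[2k-2,2]$ that you rederive is exactly \cite[Lemma 8.2]{GMP}, which the paper cites elsewhere, so you could shorten by citing it.
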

\begin{proof}
Consider the quotient matrix $A(X_{[2k-4,4]})/\pi([2k-4,4])$ formed by the action of $\Sym([2k-4,4])$ on the perfect matchings (this matrix is given in Example~\ref{ex:ep}). The first part in the equitable partition is the set of all perfect matchings for which a fixed set of size $S$ four is contained in only two edges (that is, $S$ is the 4-set stabilized by $\Sym([2k-4,4])$).

The matrix $A(X_{[2k-4,4]})/\pi([2k-4,4])$ is a $3 \times 3$-matrix that is diagonalizable. This implies the vector $(1,0,0)$ can be expressed as a linear combination of the eigenvectors of the quotient matrix. Further, each of the eigenvectors of the quotient matrix can be lifted to be an eigenvector for the adjacency matrix. By Theorem~\ref{module}, these lifted vectors are the eigenvectors belonging to the $[2k]$, $[2k-2,2]$ and $[2k-4,4]$ modules of $\Sym(2k)$.

Using the same linear combination to produce the vector $(1,0,0)$ from the eigenvectors of the quotient matrix, $w_S$ is a linear combination of eigenvectors for the modules $[2k]$, $[2k-2,2]$ and $[2k-4,4]$ (indeed $w_S$ is the vector formed by lifting $(1,0,0)$).  So we  conclude that for any subset $S$ of size four that $w_S$ is in the span of the $[2k]$, $[2k-2,2]$ and $[2k-4,4]$ modules. 

Finally, we  show that the dimension of the span of $w_S$ where $S$ is taken over all 4-subsets of $\{1,2,\dots,2k\}$ is equal to the dimension of the span of the $[2k]$, $[2k-2,2]$ and $[2k-4,4]$ modules. Define $N$ to be the matrix with the rows indexed by the perfect matchings of $K_{2k}$ and the columns by the 4-subsets $S \subset \{1,2,\dots,2k\}$, with each column the vector $w_S$. The entries of  $N^TN$ depend only the size of the intersection of the 4-subsets, so it can be written as a linear combination of the matrices in the Johnson scheme $J(2k,4)$. In particular
\[
N^TN = (2k-5)!! I + (2k-7)!! J(2k,4,2) + 9*(2k-9)!! J(2k,4,0). 
\]
The eigenvalues of the Johnson scheme are well-known and can be used to calculate the eigenvalues of $N^TN$. It is straight-forward to see that 0 is an eigenvalue with multiplicity $2k-1 + \binom{2k}{4} - \binom{2k}{3}$. This implies the rank of $N^TN$, and hence the rank of $N$, equals the dimension of $[2k]$, $[2k-2,2]$ and $[2k-4,4]$ modules. Thus the set $\{ w_S \, | \, S \subset \{1,\dots ,2k\} \textrm{ with } |S| =4\}$ is a spanning set. 
\end{proof}

\subsection{The degrees of the irreducible modules of $\Sym(n)$}

In this subsection we review some results on the dimension the irreducible modules of $\Sym(2k)$. Later we  use these results to prove Theorem~\ref{LeastEvalTrick}.\\

Let $\lambda = [\lambda_{1}, \lambda_{2}, \cdots, \lambda_{t}]$ be an integer partition of $2k$, the dimension of the $\lambda$ module will be denoted by $m(\lambda)$. The \textsl{dual partition}
$\lambda^{*}$ to the partition $\lambda$ is the partition with the Young diagram that is the reflection of the Young diagram of $\lambda$. The degree of a partition and its dual is the same; denote by $m(\lambda) = m(\lambda^{*})$. A partition $\lambda$ is called \textsl{primary} if $\lambda \geq \lambda^{*}$ in the dominance ordering (see~\cite{Ra} for more details).

\begin{theorem}\cite[p.151]{Ra}\label{RaTheorem}
Let $\lambda = [ \lambda_{1},\lambda_{2},\cdots, \lambda_{t}]$ be an integer partition of $2k$ in which $\lambda_{1} \geq k$. Then,
\begin{equation*}
m([\lambda_{1}, 2k-\lambda_{1}])\leq m(\lambda).
\end{equation*}
\end{theorem}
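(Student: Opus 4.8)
The plan is to prove the inequality $m([\lambda_1, 2k-\lambda_1]) \le m(\lambda)$ for any partition $\lambda = [\lambda_1, \lambda_2, \dots, \lambda_t] \vdash 2k$ with $\lambda_1 \ge k$, by induction on the number of parts $t$. The base case $t \le 2$ is immediate: if $t=1$ then $\lambda = [2k]$ and both sides equal $1$, and if $t=2$ then $\lambda = [\lambda_1, 2k-\lambda_1]$ and the two sides are literally equal. For the inductive step, suppose $t \ge 3$. I would produce from $\lambda$ a partition $\lambda'$ of $2k$ with fewer parts whose dimension is no larger than $m(\lambda)$, and to which the inductive hypothesis applies. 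The natural move is to merge the last two parts: set $\lambda' = [\lambda_1, \dots, \lambda_{t-2}, \lambda_{t-1}+\lambda_t]$ (re-sorting if necessary, though since $\lambda_{t-1}+\lambda_t$ could exceed $\lambda_{t-2}$ one must be a bit careful about where it lands). Then $\lambda'$ still has first part $\lambda_1 \ge k$, and $\lambda' \ge \lambda$ in dominance; I would then invoke a monotonicity principle for dimensions of two-rowed-dominating reductions.

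The cleaner route — and the one I would actually pursue — is to use the branching rule together with the observation that $[\lambda_1, 2k-\lambda_1]$ is, among all partitions of $2k$ with first part exactly $\lambda_1$, the one of smallest dimension, because removing a box from the second row and placing it further down can only increase (weakly) the dimension. Concretely, one shows: if $\mu \vdash 2k$ is obtained from $\nu \vdash 2k$ by moving a single box from row $i$ to a lower row $j>i$ (keeping a valid partition shape, i.e. a dominance-decreasing single-box move), then $m(\nu) \le m(\mu)$. Granting this lemma, any $\lambda$ with $\lambda_1 \ge k$ and $t \ge 3$ rows can be transformed, by a sequence of such box-moves pushing all mass out of rows $3, 4, \dots$ up into row $2$, into the two-rowed partition $[\lambda_1, 2k-\lambda_1]$ (the first part is never touched, so it stays $\ge k$ throughout and the two-rowed shape remains valid because $\lambda_1 \ge 2k - \lambda_1$), with the dimension only decreasing along the way; reversing the chain gives $m([\lambda_1,2k-\lambda_1]) \le m(\lambda)$.

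The single-box-move lemma itself I would prove via the hook-length formula, $m(\nu) = (2k)! / \prod_{c \in \nu} h_\nu(c)$: the inequality $m(\nu) \le m(\mu)$ is equivalent to $\prod_{c} h_\mu(c) \le \prod_c h_\nu(c)$, and one tracks exactly which hook lengths change when one box is relocated downward. Most hooks are unaffected; the ones in the two rows involved and in the columns involved change in a controlled way, and the net effect on the product of hook lengths is in the stated direction. Alternatively — and this is perhaps the slicker argument — one can use the fact (from the branching rule) that $m(\nu) = \sum m(\nu^-)$ summed over partitions $\nu^-$ of $2k-1$ obtained by deleting a corner box, and set up the induction on $2k$ so that the box-move lemma for $2k$ follows from the box-move lemma for $2k-1$ applied to corresponding corners; this avoids hook-length bookkeeping entirely.

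The main obstacle I anticipate is the combinatorial care needed in the box-move lemma: ensuring that the intermediate shapes remain genuine partitions (one may have to move boxes in a specific order, not an arbitrary one, to avoid passing through non-partition shapes), and handling the re-sorting when $\lambda_{t-1}+\lambda_t$ or an intermediate merged row becomes large. Controlling the hook-length product under a downward box move — verifying that every factor that decreases is matched or dominated by the factors that increase — is the one genuinely fiddly calculation, but it is a finite local check in the two affected rows and two affected columns. Everything else (the induction skeleton, the reduction to two rows, the use of $\lambda_1 \ge k$ only to guarantee $[\lambda_1, 2k-\lambda_1]$ is a valid partition) is routine.
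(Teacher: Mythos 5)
The paper does not actually prove this statement: it is imported verbatim from Rasala \cite[p.~151]{Ra}, so there is no in-paper argument to compare yours against. Judged on its own, your proposal has a genuine gap: the single-box-move lemma on which the whole reduction rests is false. Take $n=8$, $\nu=[4,3,1]$ and $\mu=[4,2,2]$; then $\mu$ is obtained from $\nu$ by moving one box from row $2$ down to row $3$ (a dominance-decreasing move that never touches the first row, which stays equal to $k=4$), yet the hook-length formula gives $m(\nu)=70$ and $m(\mu)=56$, so the claimed inequality $m(\nu)\le m(\mu)$ fails. Consequently your chain is not monotone: to pass from $[4,2,2]$ to $[4,4]$ by pushing the third row up into the second you are forced through $[4,3,1]$, and the dimensions along the reversed chain are $14\to 70\to 56$. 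The endpoint inequality $m([4,4])=14\le 56=m([4,2,2])$ --- the actual content of the theorem --- is true, but it is not a composition of weakly monotone local steps, so ``reversing the chain'' proves nothing. The same phenomenon defeats your first sketch (merge the last two parts and invoke a ``monotonicity principle''): dominance order is simply not monotone for dimensions in either direction, as $[3,1]>[2,2]>[2,1,1]$ with dimensions $3,2,3$ already shows for $n=4$.

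The underlying issue is that your argument uses the hypothesis $\lambda_1\ge k$ only to guarantee that $[\lambda_1,2k-\lambda_1]$ is a valid partition, whereas any correct proof must use it substantively: without it the conclusion is false (e.g.\ $m([2,2])=2<3=m([2,1,1])$, where the relevant two-row shape would be $[2,2]$ but $\lambda_1=2<k=... $ fails for $[2,1,1]\vdash 4$ read with first part $2$). Rasala's proof compares $m(\lambda)$ with $m([\lambda_1,2k-\lambda_1])$ globally, via hook-length (equivalently, explicit dimension) estimates in which the condition $\lambda_1\ge 2k-\lambda_1$ controls the first-row hooks; an alternative correct route is an explicit injection of standard Young tableaux of shape $[\lambda_1,2k-\lambda_1]$ into those of shape $\lambda$. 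The local box-moving strategy cannot be repaired, because its basic step is not dimension-monotone.
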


The next result follows from a straight-forward application of the hook length formula.
\begin{lemma}\cite[Section 12.6]{GMB}\label{Hook}
Let $n \geq 2k$, then 
\begin{equation*}
m([n-k,k])\ = \binom{n}{k}-\binom{n}{k-1}.
\end{equation*}
\end{lemma}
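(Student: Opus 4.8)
The plan is to apply the hook length formula directly to the two-row Young diagram of the partition $\lambda = [n-k,k]$, which is a genuine partition exactly because $n \geq 2k$. The hook length formula states $m(\lambda) = n! / \prod_{(i,j)\in \lambda} h(i,j)$, so everything reduces to computing the product of the hook lengths and then checking a short algebraic identity.

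First I would enumerate the hook lengths. The second row has $k$ boxes; box $(2,j)$ has leg length $0$ and arm length $k-j$, hence hook length $k-j+1$, and as $j$ runs over $1,\dots,k$ these contribute $k!$ to the product. The first row, of length $n-k$, splits into two ranges. For $j = 1,\dots,k$ the box $(1,j)$ lies directly above a box of the second row, so it has leg length $1$ and arm length $n-k-j$, giving hook length $n-k-j+2$; over this range these values are $n-k+1, n-k, \dots, n-2k+2$, whose product is $(n-k+1)!/(n-2k+1)!$. For $j = k+1,\dots,n-k$ the box $(1,j)$ has nothing beneath it, hence leg length $0$ and hook length $n-k-j+1$; over this range these run $n-2k, n-2k-1, \dots, 1$, contributing $(n-2k)!$.

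Multiplying, the full product of hook lengths is
\[
k!\,(n-2k)!\,\frac{(n-k+1)!}{(n-2k+1)!} \;=\; \frac{k!\,(n-k+1)!}{\,n-2k+1\,},
\]
so the hook length formula yields
\[
m([n-k,k]) \;=\; \frac{n!\,(n-2k+1)}{k!\,(n-k+1)!}.
\]
To finish, I would verify the binomial identity $\binom{n}{k} - \binom{n}{k-1} = \dfrac{n!\,(n-2k+1)}{k!\,(n-k+1)!}$: writing both binomial coefficients over the common factor $\tfrac{n!}{(k-1)!\,(n-k)!}$ and using $\tfrac1k - \tfrac{1}{n-k+1} = \tfrac{n-2k+1}{k(n-k+1)}$ reproduces exactly this expression.

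I do not expect a genuine obstacle here; the only step requiring care is the bookkeeping of the first-row hook lengths, where one must keep separate the $k$ boxes sitting above the second row (leg length $1$) and the remaining $n-2k$ boxes (leg length $0$). As an alternative that sidesteps the hook length formula entirely, one can invoke Young's rule: $M^{[n-k,k]} \cong \bigoplus_{i=0}^{k} S^{[n-i,i]}$ as $\Sym(n)$-modules, so that $\binom{n}{k} = \sum_{i=0}^{k} m([n-i,i])$ and likewise $\binom{n}{k-1} = \sum_{i=0}^{k-1} m([n-i,i])$; subtracting these two telescoping sums gives $m([n-k,k]) = \binom{n}{k} - \binom{n}{k-1}$.
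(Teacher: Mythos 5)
Your proof is correct and matches the paper's approach: the paper simply asserts that the lemma ``follows from a straight-forward application of the hook length formula'' (citing \cite[Section 12.6]{GMB}), and your computation carries out exactly that application, with the hook-length bookkeeping and the final binomial identity both checking out. The alternative argument via Young's rule is also valid, but the hook length route is the one the paper intends.
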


The next result is a general bound on the degree of a representation in which the first part of the corresponding integer partition is considered small.

\begin{theorem}\cite[p.163]{Ra}\label{F(n)Theorem}
Let $\lambda$ be a primary partition of $n$ for which the first part $\lambda_{1}< \lfloor \frac{n}{2} \rfloor$. Then $m(\lambda)\geq F(n)$, where
\[
F(n) =
\begin{cases}
  n\cdot F(n-1)(m+2)   &  \textrm{ if $n=2m+1$ is odd},\\
  2\cdot F(n-1)      & \textrm{ if $n$ is even},
\end{cases}
\]
with $F(0) = 2$. In particular, for $n\geq 8$,
\begin{equation}\label{F(n)}
\frac{3}{2}\cdot F(n-1)\leq F(n) \leq 2\cdot F(n-1).
\end{equation}
\end{theorem}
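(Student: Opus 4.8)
The plan is to treat the statement in two pieces: the lower bound $m(\lambda)\ge F(n)$, which is the estimate of the cited reference \cite{Ra}, and the numerical consequence \eqref{F(n)}, which falls out of the defining recursion for $F$. For the lower bound I would argue by induction on $n$ via the branching rule for the restriction $\Sym(n)\downarrow\Sym(n-1)$: if $S^{\lambda}$ denotes the irreducible $\Sym(n)$-module indexed by $\lambda$, then $m(\lambda)=\sum_{\mu}m(\mu)$, the sum taken over all $\mu$ obtained from $\lambda$ by deleting one corner box. The crux is that when $\lambda$ is primary with $\lambda_{1}<\lfloor n/2\rfloor$, enough of the summands $\mu$ are again primary --- or at least have small enough first part that Theorem~\ref{RaTheorem} applies and bounds $m(\mu)$ below by the degree of a two-row partition, evaluated via Lemma~\ref{Hook} --- that the total is at least $F(n)$. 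The induction naturally splits on the parity of $n$, which is exactly why $F$ is given by a two-step recursion: deleting a corner from a primary partition of even $n$ always leaves at least two admissible smaller partitions (the factor $2$), while for odd $n=2m+1$ the corners of $\lambda$ are more constrained, yielding the sharper fractional factor. Since the result is quoted, I would cite \cite[p.163]{Ra} for this case analysis and only spell out the ingredients used later (Theorems~\ref{RaTheorem} and \ref{module} and Lemma~\ref{Hook}).

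The inequality \eqref{F(n)} is then a short check directly from the recursion. If $n$ is even, $F(n)=2F(n-1)$, so the upper bound is an equality and the lower bound holds because $2\ge\tfrac32$. If $n=2m+1$ is odd, the recursion multiplies $F(n-1)$ by the factor $\tfrac{2m+1}{m+2}$, and $2(2m+1)\ge 3(m+2)$ is equivalent to $m\ge 4$, while $2m+1<2(m+2)$ always; hence $\tfrac32\le\tfrac{2m+1}{m+2}<2$ exactly when $m\ge 4$, i.e. $n\ge 9$. Combining the odd case ($n=9,11,13,\dots$) with the even case gives \eqref{F(n)} for every $n\ge 8$.

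The main obstacle is the inductive step for the lower bound itself: after deleting a corner box one must show that the ``bad'' summands --- those $\mu$ that are non-primary or have first part $\ge\lfloor(n-1)/2\rfloor$ --- are few enough, and the ``good'' ones large enough, that $\sum_{\mu}m(\mu)\ge F(n)$. This is delicate precisely because it is sensitive to the shape of $\lambda$ near its corners and to the parity of $n$; it is the substantive content imported from \cite{Ra}, with everything else reducing to bookkeeping with the hook-length formula and the two-row minimality bound of Theorem~\ref{RaTheorem}.
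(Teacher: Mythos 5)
Your proposal matches the paper, which gives no proof of this theorem and simply cites \cite[p.163]{Ra} for the lower bound $m(\lambda)\geq F(n)$; your verification of \eqref{F(n)} from the recursion is correct, provided one reads the odd case of the displayed recursion as $F(n)=\tfrac{n}{m+2}F(n-1)$ (the printed form $n\cdot F(n-1)(m+2)$ is evidently a typo, since a literal product would violate the stated upper bound $F(n)\leq 2F(n-1)$), and your case check $\tfrac{2m+1}{m+2}\geq\tfrac32 \iff m\geq 4$ together with the trivial even case covers all $n\geq 8$. Your branching-rule sketch for the main inequality is plausible but not carried out; since you explicitly defer that substantive content to \cite{Ra}, this is exactly what the paper itself does.
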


\subsection{Least eigenvalue of weighted adjacency matrix}

In this subsection, our goal is to show that the set of perfect matchings with two fixed edges is a maximum coclique in $M_2(2k)$. To address this, we  determine an appropriate set of coefficients $a_{2\lambda}$ so that 
\begin{align}
M = \sum_{\lambda \vdash k} a_{2\lambda} A_{2\lambda}
\end{align}
is a weighted adjacency matrix of $M_2(2k)$ with row sum $(2k-1)(2k-3)-1$ and the least eigenvalue $-1$. This proves that the ratio bound 
\[
\alpha(X) = \frac{|V(X)|}{1-\frac{d}{\tau}} = \frac{(2k-1)!!}{1-\frac{(2k-1)(2k-3)-1}{-1}} = (2k-5)!!
\]
holds with equality for $M_2(2k)$. To be a weighted adjacency matrix of $M_2(2k)$, we need that $a_{2\lambda}  =0$ whenever $\lambda$ has 2 or more ones. Further, the eigenvalue of $M$ corresponding to the $\mu$-module is $\xi^\mu = \sum_{\lambda \vdash k} a_{2\lambda} \xi_{2\lambda}^\mu$, where $\xi_{2\lambda}^\mu$ is the eigenvalue of $A_{2\lambda}$ belonging to the $\mu$-module. 

\begin{theorem}\label{SmallValues}
For $3\leq k \leq 9$, there exists a weighted adjacency matrix of the graph $M_{2}(2k)$ for which the degree and the least eigenvalue are $(2k-1)(2k-3)-1$ and $-1$, respectively. 
\end{theorem}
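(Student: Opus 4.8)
The plan is to treat this as a finite, explicit verification, organized so that the bulk of the work is reduced to a bounded linear-algebra computation for each $k$ in the range $3 \le k \le 9$. Concretely, we seek coefficients $a_{2\lambda}$, indexed by even partitions $2\lambda$ of $2k$ with $\lambda \vdash k$, such that the matrix $M = \sum_{\lambda \vdash k} a_{2\lambda} A_{2\lambda}$ satisfies three conditions: (i) $a_{2\lambda} = 0$ whenever $\lambda$ has two or more parts equal to $1$ (so that $M$ is supported on the edge set of $M_2(2k)$, i.e. is a genuine weighted adjacency matrix for that graph); (ii) the row sum of $M$ is $(2k-1)(2k-3) - 1$; and (iii) the least eigenvalue of $M$ is $-1$. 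As noted in the text preceding the statement, the eigenvalue of $M$ on the $\mu$-module is $\xi^\mu = \sum_{\lambda \vdash k} a_{2\lambda}\, \xi^\mu_{2\lambda}$, where the $\xi^\mu_{2\lambda}$ are the entries of the character table of the perfect matching association scheme. So once the character table is in hand, conditions (ii) and (iii) become linear constraints and inequalities in the finitely many unknowns $a_{2\lambda}$.

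First I would fix $k$ and assemble the complete character table of the perfect matching association scheme for that value of $2k$; since $2k \le 18$ throughout this range, these tables are available (for instance via Srinivasan's recursive algorithm, cited in the excerpt, which reaches $2k = 40$, or by direct computation using the quotient-graph method of Section~\ref{sec:Second Approach}). Next I would impose the support condition (i), eliminating the offending variables, and the row-sum condition (ii), which is the single linear equation $\sum_\lambda a_{2\lambda} v_{2\lambda} = (2k-1)(2k-3) - 1$ where $v_{2\lambda}$ is the valency (degree) of $A_{2\lambda}$; equivalently, this says the eigenvalue of $M$ on the trivial module $[2k]$ equals $(2k-1)(2k-3)-1$. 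Then I would select, for each $\mu \ne [2k]$, the target eigenvalue: we want $\xi^\mu \ge -1$ for all such $\mu$, with equality achieved for at least one module (this equality is what forces the ratio bound to be tight, and it is consistent with the Delsarte-Hoffman equality case, where the eigenvector $\nu_S - \frac{|S|}{|V|}\mathbf 1$ for the canonical coclique $S$ lives in the $[2k-2,2]$ module — so one expects $\xi^{[2k-2,2]} = -1$). With the trivial-module eigenvalue pinned and the remaining module eigenvalues constrained by $\xi^\mu \ge -1$, one has a small linear feasibility problem; I would exhibit an explicit rational solution $(a_{2\lambda})$ for each $k = 3, 4, \dots, 9$ and verify conditions (i)-(iii) directly by substitution into the known character table. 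Presenting the seven solution vectors (perhaps in a table) together with the resulting eigenvalue lists constitutes the proof.

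The main obstacle is not conceptual but is the reliability and completeness of the input data: one needs the \emph{entire} character table for each $2k \in \{6, 8, \dots, 18\}$, not just the partial rows recorded in Table~\ref{CharTable}, because the least-eigenvalue condition (iii) is a constraint over \emph{all} modules $\mu$, and a sign error or omission in any row would invalidate the conclusion. I would mitigate this by cross-checking each table against the known constraints — row sums of $A_{2\lambda}$ must equal the valencies, the orthogonality relations of the association scheme must hold, and the first column must reproduce the module dimensions $m(\mu)$ — and by independently recomputing small cases via the quotient-graph eigenvalue technique of Theorem~\ref{module}. A secondary, more routine obstacle is simply that the number of even partitions of $2k$ grows (there are $p(k)$ of them), so for $k = 9$ one is working with a modest but nontrivial number of variables and modules; still, everything here is a finite rational computation that can be carried out by hand or by computer algebra, and the only real content of the theorem is that such a feasible $(a_{2\lambda})$ exists for each of these seven values of $k$. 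I expect the pattern of the solution — and in particular which module attains the eigenvalue $-1$ — to stabilize across the range, foreshadowing the uniform construction that must be given for general $k$ in the remainder of the section.
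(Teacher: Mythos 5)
Your proposal is essentially the paper's own proof: for each $k$ in this range the complete character table of the perfect matching association scheme is available, and the authors likewise impose the support and row-sum conditions, pin the eigenvalues on the modules $[2k-2,2]$, $[2k-4,4]$, $[2k-4,2,2]$ to $-1$ with all others strictly larger, and exhibit explicit rational coefficient vectors (by hand for $3\le k\le 5$ and via an LP solver for $7\le k\le 9$, with $k=6$ dispatched instead through the clique--coclique construction of Theorem~\ref{CliqueApproach}). The only substantive omission is that you do not actually produce the seven coefficient vectors, which is the real content of the existence claim, but the verification scheme you describe is exactly the one carried out in the paper.
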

\begin{proof}
For $k=3,4,5$, define the matrices $M_{6} = A_{[6]}+A_{[4,2]}$, $M_{8} = \frac{1}{4}A_{[8]}+\frac{1}{2}A_{[6,2]}+\frac{1}{2}A_{[4,4]}$, and $M_{10} = \frac{1}{12}A_{[10]}+\frac{1}{12}A_{[8,2]}+\frac{1}{6}A_{[4,4,2]}$. The matrices $M_{6}$, $M_{8}$, and $M_{10}$ are the desired weighted adjacency matrices for the graphs $M_{2}(6)$, $M_{2}(8)$, and $M_{2}(10)$, respectively. 

\FloatBarrier
\begin{center}
\begin{tabular}[h]{N | m{1.4cm} ? c | c | c | c | c ? c |}
\hline
\rule{0pt}{20pt}& ~ & \textcolor{Red}{$A_{[8]}$} & \textcolor{Red}{$A_{[6,2]}$} & \textcolor{Red}{$A_{[4,4]}$} & $A_{[4,2,2]}$ & $A_{[2,2,2,2]}$ & \textcolor{Green}{$M_{8}$} \\
\Xhline{5\arrayrulewidth}
\rule{0pt}{20pt}& $\mathbf{\chi}_{[8]}$ & $48$ & $32$ & $12$ & $12$ & $1$ & $\textcolor{Green}{34}$ \\
\hline
 
\rule{0pt}{20pt}& $\mathbf{\chi}_{[6,2]}$ & $-8$ & $4$ & $-2$ & $5$ & $1$ & $\textcolor{Green}{-1}$ \\
  
\rule{0pt}{20pt}& $\mathbf{\chi}_{[4,4]}$ & $-2$ & $-8$ & $7$ & $2$ & $1$ & $\textcolor{Green}{-1}$ \\
 
\rule{0pt}{20pt}& $\mathbf{\chi}_{[4,2,2]}$ & $4$ & $-2$ & $-2$ & $-1$ & $1$ & $\textcolor{Green}{-1}$ \\

\rule{0pt}{20pt}& $\mathbf{\chi}_{[2,2,2,2]}$ & $-6$ & $8$ & $3$ & $-6$ & $1$ & $\textcolor{Green}{4}$ \\
\hline
\end{tabular}
\vspace*{0.2cm}
\captionof{table}{Character table for $2k=8$}
\label{CharTableK=4}
\end{center}
\FloatBarrier
As we  see in Table~\ref{CharTableK=4}, the row sum and the least eigenvalue of $M_{8}$ are $(2k-1)(2k-3)-1$ and $-1$. Similarly, using the character tables for $k=3$ and $k=5$~\cite{Mu, Sri1}, we find the eigenvalues of the matrices $M_{6}$ and $M_{10}$ and verify that the ratio bound holds with equality.\\

For $k=6$, Theorem~\ref{CliqueApproach} proves that equality holds in the ratio bound. For $7\leq k \leq 9$, we have the complete character table for the perfect matching association scheme. So we can express the eigenvalues of $M = \sum_{\lambda \vdash k} a_{2\lambda} A_{2\lambda}$ as a system of linear equations. The objective is to maximize the value of the greatest eigenvalue (this is the row sum, so the eigenvalue belonging to the $[2k]$ module) while fixing the eigenvalues corresponding to the modules $[2k-2,2]$, $[2k-4,4]$, and $[2k-4,2,2]$ to be  $-1$, and having all other eigenvalues strictly greater than $-1$. The Gurobi Optimizer~\cite{gurobi} is then used to find solutions for these system of inequalities.
As such we determined the desired weighted adjacency matrices as follows:
\[
\begin{split}
M_{7} &= \frac{1}{640}A_{[14]}+\frac{1}{80}A_{[6,6,2]}+\frac{1}{60}A_{[4,4,4,2]}, \\
M_{8} &= \frac{1}{3840}A_{[14,2]}+\frac{1}{2048}A_{[10,6]}+\frac{1}{120}A_{[4,4,4,4]}, \\
M_{9} &= \frac{1}{80640}A_{[18]}+\frac{1}{13440}A_{[8,8,2]}+\frac{1}{4480}A_{[6,6,4,2]}.
\end{split}
\]
\end{proof}

To find the set of coefficients for $k\geq 10$, we consider linear combinations of the form
\[
M_{k} = \mathbf{a_1} A_{[2k]} + \mathbf{a_2} A_{[2k-2,2]} + \mathbf{a_3} A_{[2k-4,4]}.
\]
To find the values of $\mathbf{a_1}, \mathbf{a_2}$ and $\mathbf{a_3}$ for $k\geq 10$, we use the eigenvalues in the partial character table in Table~\ref{CharTable} to produce a corresponding linear
system. For this system, there is one equation for each of the eigenvalues that correspond to the irreducible modules $[2k-2,2]$, $[2k-4,4]$, and $[2k-4,2,2]$, which are equated to $-1$. The rationale for choosing these modules is that they, along with $[2k]$, are the modules that are in both the decomposition of $\ind_{\Sym(n)}(1_{\Sym([2k-4,2,2])})$ and $\ind_{\Sym(n)}(1_{\Sym(2) \wr \Sym(k)})$. Observe $\Sym([2k-4,2,2])$ is the group that stabilizes the set of all perfect matchings which include a fixed pair of edges.

Using the results in Subsection~\ref{Mod of Char Table}, this linear system becomes:

\newcolumntype{C}{>{{}}c<{{}}} 
\[
\setlength\arraycolsep{0pt}
\renewcommand\arraystretch{1.25}
\begin{array}{*{3}{rC}l}
   -(2k-4)!!\mathbf{a_1} & + &  (k-2)(2k-6)!!\mathbf{a_2} & - & k(k-3)(2k-8)!!\mathbf{a_3} & = & -1, \\
    -(2k-6)!!\mathbf{a_1} & - &  (5k-12)(2k-8)!!\mathbf{a_2} & + &  \frac{1}{2}(7k-15)(2k-8)!!\mathbf{a_3} & = &  -1, \\
    2(2k-6)!!\mathbf{a_1} & - & (2k-6)!!\mathbf{a_2} & - & \frac{1}{2}(2k-6)!!\mathbf{a_3} & = & -1.
\end{array}
\]

Solving this system, we obtain the coefficients $\mathbf{a_1} = \frac{1}{4(2k-6)!!}$, and $\mathbf{a_2} = \mathbf{a_3} = \frac{1}{(2k-6)!!}$. Note that for $k>4$, the determinant of the coefficient matrix corresponding to the aforementioned linear system is nonzero, so the values $\mathbf{a_1}$, $\mathbf{a_2}$, and $\mathbf{a_3}$ are unique. This can be easily checked  by using any basic mathematical software.
\begin{theorem}\label{LeastEvalTrick}
For $k\geq 10$, let $M = \mathbf{a_1}A_{[2k]}+\mathbf{a_2}A_{[2k-2,2]}+\mathbf{a_3}A_{[2k-4,4]}$
where $\mathbf{a_1} = \frac{1}{4(2k-6)!!}$, and $\mathbf{a_2} = \mathbf{a_3} = \frac{1}{(2k-6)!!}$. Then the row sum and the least eigenvalue of the matrix $M$ are $(2k-1)(2k-3)-1$ and $-1$, respectively. Moreover, the only modules with eigenvalue equal to -1 are $[2k-2,2]$, $[2k-4,4]$ and $[2k-4,2,2]$.
\end{theorem}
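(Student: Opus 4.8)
The plan is to use the values already tabulated in Table~\ref{CharTable} to pin down the row sum and the eigenvalues on a few modules, and then to control the eigenvalue of $M$ on \emph{every} remaining $\Sym(2k)$-module, splitting those into a short explicit list and a generic family dispatched by a dimension bound. First I would check that $M=\mathbf{a_1}A_{[2k]}+\mathbf{a_2}A_{[2k-2,2]}+\mathbf{a_3}A_{[2k-4,4]}$ really is a weighted adjacency matrix of $M_{2}(2k)$: for $k\ge 4$ each of the partitions $[2k],[2k-2,2],[2k-4,4]$ has at most one part equal to $2$, so $M$ is supported on the edge set of $M_{2}(2k)$, has zero diagonal, and all three coefficients are positive. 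Since the all-ones vector spans the $[2k]$-module, the row sum of $M$ equals its $[2k]$-eigenvalue $\mathbf{a_1}\tfrac{(2k)!!}{2k}+\mathbf{a_2}\tfrac{(2k)!!}{2(2k-2)}+\mathbf{a_3}\tfrac{(2k)!!}{4(2k-4)}$; rewriting the double factorials and substituting $\mathbf{a_1}=\tfrac{1}{4(2k-6)!!}$, $\mathbf{a_2}=\mathbf{a_3}=\tfrac{1}{(2k-6)!!}$ gives $(k-1)(k-2)+2k(k-2)+k(k-1)=4k^{2}-8k+2=(2k-1)(2k-3)-1$. Next, for an even partition $2\mu$ of $2k$ the $2\mu$-eigenvalue of $M$ is $\xi^{2\mu}=\mathbf{a_1}\xi^{2\mu}_{[2k]}+\mathbf{a_2}\xi^{2\mu}_{[2k-2,2]}+\mathbf{a_3}\xi^{2\mu}_{[2k-4,4]}$, where the three quantities $\xi^{2\mu}_{2\lambda}$ are read from the first three columns of Table~\ref{CharTable}, which are completely determined by the quotient-graph computations of Subsection~\ref{Mod of Char Table}; and for $2\mu\in\{[2k-2,2],[2k-4,4],[2k-4,2,2]\}$ the coefficients were defined as the (unique, since $k>4$) solution of the linear system that sets $\xi^{2\mu}=-1$ on exactly these three modules, so substitution verifies the eigenvalue is $-1$ there.

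The remaining task is to show $\xi^{2\mu}>-1$ for every other even partition $2\mu$ of $2k$, and I would organise these by \emph{defect} $j:=2k-2\mu_{1}$, writing $2\mu=[\,2k-j,\nu\,]$ with $\nu$ an even partition of $j$. For each small defect $j\in\{6,8,\dots,j_{0}\}$ there are only finitely many shapes $\nu$, and for each one the eigenvalues $\xi^{2\mu}_{[2k]},\xi^{2\mu}_{[2k-2,2]},\xi^{2\mu}_{[2k-4,4]}$ can be computed exactly by running the quotient-graph/dominance recursion of Theorem~\ref{module} a few more steps, extending Table~\ref{CharTable}; one then checks that the resulting $\xi^{2\mu}$, a rational function of $k$ of size $O(1/k)$, is $>-1$ for all $k\ge 10$. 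For example one finds $\xi^{[2k-6,6]}=-\tfrac{3}{4(2k-6)}-\tfrac{3(3k-10)}{(2k-6)(2k-8)}-\tfrac{3(9k^{2}-71k+140)}{(2k-6)(2k-8)(2k-10)}$; this is negative and tiny, but it is already extremely close to $-1$ at the bottom of the range, so the verification near $k=10$ needs a little care.

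For defect $j\ge j_{0}$ I would instead bound $|\xi^{2\mu}|\le\mathbf{a_1}|\xi^{2\mu}_{[2k]}|+\mathbf{a_2}|\xi^{2\mu}_{[2k-2,2]}|+\mathbf{a_3}|\xi^{2\mu}_{[2k-4,4]}|$ and use the Frobenius-norm inequality $|\xi^{2\mu}_{2\lambda}|\le\sqrt{(2k-1)!!\;v_{2\lambda}/m(2\mu)}$, where $v_{2\lambda}$ is the degree of $A_{2\lambda}$ and $m(2\mu)$ is the dimension of the $2\mu$-module (this follows from $\operatorname{tr}(A_{2\lambda}^{2})=(2k-1)!!\,v_{2\lambda}=\sum_{2\mu}(\xi^{2\mu}_{2\lambda})^{2}m(2\mu)$). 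Plugging in the values of $\mathbf{a_1},\mathbf{a_2},\mathbf{a_3}$ and of $v_{[2k]},v_{[2k-2,2]},v_{[2k-4,4]}$ turns this into $|\xi^{2\mu}|\le c\,k^{9/4}/\sqrt{m(2\mu)}$ for an absolute constant $c$. Now I would invoke the dimension lower bounds of the preceding subsection: by Theorem~\ref{RaTheorem} and Lemma~\ref{Hook} the two-row shapes are the dimension-minimal ones among partitions with $\mu_{1}\ge k/2$, while by Theorem~\ref{F(n)Theorem} every primary partition with $\mu_{1}<k/2$ has dimension at least $F(2k)$, which is astronomically large; consequently once $j\ge j_{0}$ one has $m(2\mu)$ past the threshold ($\gtrsim k^{9/2}$) at which the displayed bound forces $|\xi^{2\mu}|<1$. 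Combining the three pieces shows that the least eigenvalue of $M$ is exactly $-1$, attained on exactly the modules $[2k-2,2],[2k-4,4],[2k-4,2,2]$.

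The genuine obstacle is making the split of the last two paragraphs airtight: the crude Frobenius estimate drops below $1$ only once $m(2\mu)$ exceeds a power of $k$ (roughly $k^{9/2}$), and several two-row-dominated shapes of moderate defect have dimension below that threshold, so the finite exceptional list that must be handled by explicit quotient-graph computations is not small, and the constants hidden in Theorems~\ref{RaTheorem}--\ref{F(n)Theorem} have to be tracked carefully to be sure the list is complete and that one fixed $j_{0}$ works for all $k\ge 10$. A way to shrink this obstacle, and the first thing I would try, is to replace the Frobenius bound by a sharper estimate for $\xi^{2\mu}_{[2k]},\xi^{2\mu}_{[2k-2,2]},\xi^{2\mu}_{[2k-4,4]}$ on the orthogonal complement of the span of the $w_{S}$: by Theorem~\ref{thm:4sets} that span is exactly $[2k]\oplus[2k-2,2]\oplus[2k-4,4]$, which constrains these three matrices enough to bound their deeper eigenvalues more directly, and hence to reduce (or eliminate) the exceptional list.
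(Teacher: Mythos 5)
Your plan is, in substance, the paper's own proof: the row sum is computed from the degree column exactly as you do; the value $-1$ on $[2k-2,2]$, $[2k-4,4]$, $[2k-4,2,2]$ is read off from the defining linear system; and every remaining module is handled by a trace-of-square estimate combined with the dimension lower bounds of Theorem~\ref{RaTheorem} and Theorem~\ref{F(n)Theorem}, with a finite list of low-dimensional shapes checked explicitly. The only technical difference is that the paper bounds $\sum_{i\ge 4} m_i\theta_i^2$ using $\tr(M^2)$ with the known contributions $d_M^2+m_1+m_2+m_3$ subtracted, rather than applying a Frobenius bound to each $A_{2\lambda}$ separately and using the triangle inequality; both give the same $m(2\mu)\gtrsim k^{9/2}$ threshold. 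Your worry that the exceptional list is unmanageable is overstated: by Theorem~\ref{RaTheorem} and Lemma~\ref{Hook} every shape with first part between $k$ and $2k-8$ has dimension at least $m([2k-8,8])\sim k^8$, so the only shapes needing individual attention are those with first part $2k-6$ (namely $[2k-6,6]$, $[2k-6,4,2]$, $[2k-6,2,2,2]$) together with $[2k-8,8]$, exactly the list the paper treats.

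There is, however, one concrete gap. Your displayed formula gives
\[
\xi^{[2k-6,6]} \;=\; -\frac{6\bigl(8k^{2}-65k+130\bigr)}{(2k-6)(2k-8)(2k-10)},
\]
and at $k=10$ this is $-\tfrac{6\cdot 280}{14\cdot 12\cdot 10}=-1$ exactly, not merely ``extremely close to $-1$.'' So the strict inequality $\xi^{2\mu}>-1$ that your small-defect step is supposed to deliver fails at the bottom of the stated range, and with it the final claim that $-1$ is attained on \emph{exactly} the three listed modules. The paper avoids running this formula at $k=10$ by splitting off $10\le k\le 14$ and verifying those cases against the complete character tables of Muzychuk and Srinivasan, reserving the asymptotic argument for $k\ge 15$, where one checks $\xi^{[2k-6,6]}>-1$; your proposal needs the same case split (and, at $k=10$, either a correction to the tabulated $[2k-6,6]$ eigenvalues or an acknowledgement that this module must be handled separately). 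Without addressing $k=10$ explicitly, the ``moreover'' clause is not established as written.
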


\begin{proof}
For $10\leq k\leq 14$, similar to the proof of Theorem~\ref{SmallValues}, by utilizing the complete character tables of the perfect matching association scheme~\cite{Mu, Sri1} we can find all the eigenvalues of the matrix $M$, and we see that the ratio bound holds with equality.\\

For the remainder of the proof assume that $k\geq 15$. Reviewing the linear system of equations, it follows that $-1$ is an eigenvalue of $M$ (corresponding to the modules $[2k-2]$, $[2k-4,4]$, and $[2k-4,2,2]$). Denote the row sum by $d_{M}$; this is the linear combination of the degrees for matrices $A_{[2k]}$, $A_{[2k-2,2]}$, and $A_{[2k-4,4]}$; say $d_{[2k]}$, $d_{[2k-2,2]}$, and $d_{[2k-4,4]}$. For the coefficients $\mathbf{a_1}$, $\mathbf{a_2}$, and $\mathbf{a_3}$, we  calculate
\begin{align*}
d_{M} &= \mathbf{a_1}d_{[2k]}+\mathbf{a_2}d_{[2k-2,2]}+\mathbf{a_3}d_{[2k-4,4]}\\ 
          &= \frac{(2k-2)!!}{4(2k-6)!!}+\frac{k(2k-4)!!}{(2k-6)!!}+\frac{k(k-1)(2k-6)!!}{(2k-6)!!} \\
          &= (2k-1)(2k-3)-1.\\
\end{align*}

Finally, we need to prove that all other eigenvalues of the matrix $M$ are strictly greater than $-1$. Let $\{d_{M}^{(1)}, -1^{(m_{1})}, -1^{(m_{2})}, -1^{(m_{3})},\theta_{4}^{(m_{4})}, \cdots,  \theta_{k}^{(m_{k})} \}$ be the spectrum of the matrix $M$, where the values $m_{i}$ represent the multiplicity of the eigenvalues. By the Lemma~\ref{Hook}, and the hook length formula~\cite[Section 12.6]{GMB} we have,
\begin{align*}
m_{1} =& \frac{2k(2k-3)}{2},\\
m_{2} =& \frac{2k(2k-1)(2k-2)(2k-7)}{4!} ,\\
m_{3} =& \frac{2k(2k-1)(2k-4)(2k-5)}{12}.
\end{align*}

As we defined, $M = \mathbf{a_1}A_{[2k]}+\mathbf{a_2}A_{[2k-2,2]}+\mathbf{a_3}A_{[2k-4,4]}$, the entry on the diagonal of the matrix $M^{2}$ is
\begin{align*}
M^{2}[i,i] &= \mathbf{a_1}^{2}d_{[2k]}+\mathbf{a_2}^{2}d_{[2k-2,2]}+\mathbf{a_3}^{2}d_{[2k-4,4]}\\
 &= \frac{(2k-2)!!}{\left( 4(2k-6)!!\right)^{2}}+ \frac{ k(2k-4)!!}{\left((2k-6)!!\right)^{2}}+\frac{ k(k-1)(2k-6)!!}{\left( (2k-6)!!\right)^{2}}\\
 &= \frac{13k^2-23k+2}{4(2k-6)!!}. 
\end{align*}
It is well-known that the trace of any matrix is equal to the sum of its eigenvalues. Hence we have that the trace of $M^2$ is
\begin{align}\label{TraceTrick}
\left( \frac{13k^2-23k+2}{4(2k-6)!!}\right) (2k-1)!! \nonumber = d_{M}^{2}+m_{1}+m_{2}+m_{3}+\sum_{i=4}^{k}m_{i}\theta_{i}^{2}.
\end{align}
From the equation above, we obtain the following inequality for any eigenvalue $\theta_i$ of $M$,
\begin{equation}\label{Lambda}
| \theta_{i} |\leq \sqrt{\frac{(2k-1)!!}{(2k-6)!!}\left( \frac{13k^2-23k+2}{4}\right)-\left( 18k^{4}-74k^{3}+\frac{191}{2}k^{2}-\frac{79}{2}k+4 \right)} \sqrt{\frac{1}{m_{i}}}.
\end{equation}

To finish the proof, it is sufficient to prove that the term in the right-hand side of the above inequality is strictly less than $1$; or equivalently 
\begin{equation}\label{mi}
m_{i} > \frac{(2k-1)!!}{(2k-6)!!}\left( \frac{13k^2-23k+2}{4}\right)-\left( 18k^{4}-74k^{3}+\frac{191}{2}k^{2}-\frac{79}{2}k+4 \right).
\end{equation}

Let the partition $\lambda_{i} = [\lambda_{i_1}, \lambda_{i_2},\cdots, \lambda_{i_\ell}]$ where $\lambda_{i_1}\geq \lambda_{i_2} \geq \cdots \geq \lambda_{i_\ell}$. Then there are 3 cases:
\begin{itemize}
\item[{\bf Case 1.}] Assume that $\lambda_{i_1}\geq k$. 

First, consider the module $[2k-6,6]$. Using the linear combination of the eigenvalues of the matrices $A_{[2k]}$, $A_{[2k-2,2]}$, and $A_{[2k-4,4]}$ corresponding to the module $[2k-6,6]$ it follows  that the eigenvalue of $M$ belonging to $[2k-6,6]$ is
\[
\frac{-6(2k-12)!!}{(2k-6)!!} \left( 8k^2-65k+130\right).
\]
This eigenvalue is greater than $-1$ for $k\geq 15$ (this can be checked with a mathematical software). 

Second, consider the modules $[2k-6,4,2]$ and $[2k-6,2,2,2]$. Using the hook length formula~\cite[Section 12.6]{GMB}, we calculate the dimensions $m_{[2k-6,4,2]}$ and $m_{[2k-6,2,2,2]}$.  If, in the right hand side of (\ref{mi}), we approximate the term $(2k-6)!!$ with $(2k-7)!!$, then the inequality holds for $m_{[2k-6,4,2]}$ with $k\geq 37$, and for $m_{[2k-6,2,2,2]}$ for $k\geq 63$. Using Maple for the values of $m_{[2k-6,4,2]}$ for $15\leq k \leq 36$, and for and for $m_{[2k-6,2,2,2]}$  with $15\leq k \leq 62$, observe that (\ref{mi}) holds for these two modules.\\

Next consider the module $[2k-8,8]$. By Lemma~\ref{Hook} the multiplicity of the corresponding eigenvalue is 
\begin{equation*}
\frac{(2k)(2k-1)(2k-2)(2k-3)(2k-4)(2k-5)(2k-6)(2k-15)}{8!}.
\end{equation*}
Thus, if in the right hand side of (\ref{mi}), we approximate the term $(2k-6)!!$ with $(2k-7)!!$, then the inequality holds for all $k\geq 20$. Using Maple for the values $15\leq k\leq 19$, we have that (\ref{mi}) holds for this module.\\

Finally, using Theorem~\ref{RaTheorem} and Lemma~\ref{Hook}, for all modules with $\lambda_{i_1} \geq k$ and $\lambda_{i_1}\leq 2k-8$, the multiplicity $m_{i}$ is greater than the multiplicity for the module $[2k-8,8]$. Thus (\ref{mi}) holds for all remaining modules with $\lambda_{i_1}>k$ .

\item[{\bf Case 2.}] Assume that $\lambda_{i_1} < k$ and $\lambda_{i}$ is primary. 

Using Theorem~\ref{F(n)Theorem} for $2k\geq 8$, we have
\begin{equation}\label{UpBound}
m(\lambda)\geq F(2k) = 2F(2k-1) \geq (2)     \left( \frac{3}{2} \right) F(2k-2)\geq \cdots \geq 2(3^{k}).
\end{equation}
Approximating the term $(2k-6)!!$ with $(2k-7)!!$, in (\ref{Lambda}), for $k\geq 15$ we have
\begin{equation*}
\theta_{i}^{2}  \leq \frac{104k^{5}-724k^{4}+1738k^{3}+595k-46}{4m_{i}} \leq \frac{104k^{5}}{8(3^{k})}\leq 1.
\end{equation*}
In fact, for $k\geq 1$, the term $(-724k^{4}+1738k^{3}+595k-46)$ is always negative. Noting this along with (\ref{UpBound}) proves the second inequality above. The last inequality holds for all $k\geq 15$.\\

\item[{\bf Case 3.}] Assume that $\lambda_{i_1} < k$ and $\lambda_{i}$ is not primary. 

We know that the degrees of a partition and its dual are the same, so $m(\lambda) = m(\lambda^{*})$. Let $\lambda^{*} = (\lambda_{1}^{*},\lambda_{2}^{*},\cdots, \lambda_{t}^{*})$. Since $\lambda$ is an even partition, then $\lambda_{1}^{*} = \lambda_{2}^{*} = f$. Since  $\lambda_{i}$ is not primary, $\lambda^{*}\geq \lambda$. This means that $\lambda^{*}$ is primary. If $f\geq k$, then $\lambda^{*} = [k,k]$ which is covered in Case 1. If $f<k$, this is covered by Case 2. 
\end{itemize}
\end{proof}


Recall that a canonical $2$-intersecting set of perfect matchings is the set of all perfect matchings that contain the edges $e_1$ and $e_2$. The size of a canonical $2$-intersecting set is $(2k-5)!!$. Theorems~\ref{SmallValues} and~\ref{LeastEvalTrick}, along with the ratio bound (Theorem~\ref{ratioBound}), show that the size of a 2-intersecting set of perfect matchings is no larger than $(2k-5)!!$. The ratio bound further implies that if $S$ is a maximum 
2-intersecting set and $v_S$ is the characteristic vector of $S$, then $v_S - \frac{1}{(2k-1)(2k-3)}{\bf 1}$ is a $-1$-eigenvector for $M_2(2k)$. Since the only irreducible representations of $\Sym(2k)$ that affords $-1$ as an eigenvalue are $[2k-2,2]$, $[2k-4,4]$, and $[2k-4,2,2]$, this implies that $v_S$ is in $\spanof \{[2k], [2k-2,2], [2k-4,4], [2k-4,2,2]\}$. The next result gives a more convenient spanning set for this space. 
Define $\nu_{e_{1},e_{2}}$ be the characteristic vector of all perfect matchings with the (disjoint) edges $e_{1}$ and $e_{2}$.

\begin{theorem}
For $k\geq 4$, let
\[
V(2k) = \spanof\{\nu_{e_{1},e_{2}} \,|\, e_{1},e_{2} \textrm{ edges in } K_{2k} \} 
\]
and 
\[
W(2k) = \spanof \{[2k], [2k-2,2], [2k-4,4], [2k-4,2,2]\}.
\]
Then $W(2k) = V(2k)$.
\end{theorem}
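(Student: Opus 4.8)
The plan is to prove the two inclusions $V(2k)\subseteq W(2k)$ and $W(2k)\subseteq V(2k)$ separately, using that both spaces are $\Sym(2k)$-submodules of the permutation module $\cC[\mathcal{M}]$ on the perfect matchings of $K_{2k}$: the group permutes the generators $\nu_{e_1,e_2}$ of $V(2k)$, and by Lemma~\ref{Ind.Decomp.2} the module $\cC[\mathcal{M}]=\bigoplus_{\lambda\vdash k}[2\lambda]$ is multiplicity free, so every submodule is a direct sum of a subfamily of the even modules $[2\lambda]$. Thus it suffices to decide which of the four even modules occurring in $W(2k)$, and no others, occur in $V(2k)$.

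The inclusion $V(2k)\subseteq W(2k)$ is essentially contained in the paragraph preceding the statement: every $\nu_{e_1,e_2}$ is the characteristic vector of a maximum $2$-intersecting set, which has size $(2k-5)!!$, so by Theorems~\ref{SmallValues} and~\ref{LeastEvalTrick} and the equality case of the ratio bound (Theorem~\ref{ratioBound}) the vector $\nu_{e_1,e_2}-\frac{(2k-5)!!}{(2k-1)!!}\mathbf{1}$ is a $(-1)$-eigenvector of the weighted adjacency matrix $M$, and $-1$ is an eigenvalue of $M$ only on the modules $[2k-2,2]$, $[2k-4,4]$ and $[2k-4,2,2]$; since $\mathbf{1}$ spans the trivial module $[2k]$, this gives $\nu_{e_1,e_2}\in W(2k)$, hence $V(2k)\subseteq W(2k)$. (One can also see this representation-theoretically: $\nu_{e_1,e_2}$ is fixed by a conjugate of $\Sym([2k-4,2,2])$, so $V(2k)$ is a homomorphic image of $\ind_{\Sym(2k)}(1_{\Sym([2k-4,2,2])})$, whose constituents are the $\phi_\mu$ with $\mu\trianglerighteq[2k-4,2,2]$ by Theorem~\ref{Ind.Decomp.1}; intersecting with the even partitions of Lemma~\ref{Ind.Decomp.2} leaves exactly $[2k],[2k-2,2],[2k-4,4],[2k-4,2,2]$.) For the opposite inclusion, note that for every $4$-subset $S$ the vector $w_S$ is the sum of the three generators $\nu_{e_1,e_2}$ of $V(2k)$ coming from the three partitions of $S$ into a pair of edges, so $w_S\in V(2k)$ and hence, by Theorem~\ref{thm:4sets}, $[2k]\oplus[2k-2,2]\oplus[2k-4,4]\subseteq V(2k)$. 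Combined with $V(2k)\subseteq W(2k)$, the proof is complete once we show $[2k-4,2,2]\subseteq V(2k)$, i.e.\ that $V(2k)$ is strictly larger than $\spanof\{w_S : |S|=4\}$, equivalently that $\nu_{\{1,2\},\{3,4\}}\notin\spanof\{w_S\}$.

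To settle this last point I would proceed as follows. Let $B$ be the matrix with rows indexed by the perfect matchings and columns indexed by the unordered pairs of disjoint edges, the column of $\{e_1,e_2\}$ being $\nu_{e_1,e_2}$, so that $V(2k)=\operatorname{im}(B)=\operatorname{im}(BB^{T})$. Two perfect matchings $M,M'$ whose union has shape $2\lambda$ share exactly $m_1(\lambda)$ edges, where $m_1(\lambda)$ denotes the number of parts of $\lambda$ equal to $1$, and therefore share $\binom{m_1(\lambda)}{2}$ pairs of disjoint edges; hence
\[
BB^{T}=\sum_{\lambda\vdash k}\binom{m_1(\lambda)}{2}A_{2\lambda},
\]
a positive semidefinite element of the Bose--Mesner algebra, so $[2k-4,2,2]\subseteq V(2k)$ if and only if the eigenvalue of $BB^{T}$ on the $[2k-4,2,2]$-module is nonzero. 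I would verify this by computing the squared length of the orthogonal projection of $\nu_{\{1,2\},\{3,4\}}$ onto $\spanof\{w_S\}=\operatorname{im}(N)$, which equals $(N^{T}\nu_{\{1,2\},\{3,4\}})^{T}(N^{T}N)^{+}(N^{T}\nu_{\{1,2\},\{3,4\}})$: the vector $N^{T}\nu_{\{1,2\},\{3,4\}}$ is supported on $\{1,2,3,4\}$, on the $4$-sets obtained by adjoining a pair from $\{5,\dots,2k\}$ to $\{1,2\}$ or to $\{3,4\}$, and on the $4$-subsets of $\{5,\dots,2k\}$, with respective values $(2k-5)!!$, $(2k-7)!!$ and $3(2k-9)!!$, while $N^{T}N=(2k-5)!!\,I+(2k-7)!!\,J(2k,4,2)+9(2k-9)!!\,J(2k,4,0)$ lies in the Johnson scheme $J(2k,4)$; so the projection length is evaluated by eigenvalue arithmetic in $J(2k,4)$, and showing that it is strictly smaller than $\|\nu_{\{1,2\},\{3,4\}}\|^{2}=(2k-5)!!$ forces $\nu_{\{1,2\},\{3,4\}}$ to have a nonzero $[2k-4,2,2]$-component, which finishes the proof since $\nu_{\{1,2\},\{3,4\}}\in W(2k)$.

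The main obstacle is precisely this last step, that is, certifying that $V(2k)$ does not collapse onto $\spanof\{w_S\}$; everything else is bookkeeping with modules already established in Sections~\ref{sec:background} and~\ref{sec:Second Approach}. A more structural alternative I would also try uses $\nu_{e_1,e_2}=\nu_{e_1}\circ\nu_{e_2}$ (Schur product): since $\spanof\{\nu_e\}=[2k]\oplus[2k-2,2]$ (the $t=1$ case) is contained in $V(2k)$, the space $V(2k)$ equals the span of all $\nu_e\circ\nu_f$ over pairs of edges $e,f$, which for any commutative association scheme is the Schur closure of $[2k]\oplus[2k-2,2]$, namely $[2k]\oplus[2k-2,2]\oplus\bigoplus\{[2\mu] : q^{[2\mu]}_{[2k-2,2],[2k-2,2]}\neq 0\}$ (the trivial module $[2k]$ contributing only itself and $[2k-2,2]$); the task then reduces to showing the single Krein parameter $q^{[2k-4,2,2]}_{[2k-2,2],[2k-2,2]}$ is nonzero, which can be pursued through the partially known character table of Table~\ref{CharTable} or, for $2k$ large, via positivity of the Kronecker coefficient $g([2k-2,2],[2k-2,2],[2k-4,2,2])$ together with the multiplicity-freeness of the scheme.
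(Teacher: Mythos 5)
Your overall architecture is sound and the easy parts are done correctly: both $V(2k)$ and $W(2k)$ are $\Sym(2k)$-submodules of a multiplicity-free permutation module, so the problem does reduce to deciding which even constituents occur in $V(2k)$; the inclusion $V(2k)\subseteq W(2k)$ follows either from the equality case of the ratio bound (as in the paper) or, more cleanly, from your observation that $V(2k)$ is a homomorphic image of $\ind_{\Sym(2k)}(1_{\Sym([2k-4,2,2])})$ and only four even partitions dominate $[2k-4,2,2]$; and the identity $w_S=\sum\nu_{e_1,e_2}$ over the three pair-partitions of $S$ combined with Theorem~\ref{thm:4sets} does put $[2k]\oplus[2k-2,2]\oplus[2k-4,4]$ inside $V(2k)$. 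But the proof is not complete: the one step that actually carries the content of the theorem --- that $[2k-4,2,2]\subseteq V(2k)$, i.e.\ that $V(2k)$ is strictly larger than $\spanof\{w_S\}$ --- is only a plan. You correctly identify two routes (nonvanishing of the $[2k-4,2,2]$-eigenvalue of $BB^{T}=\sum_\lambda\binom{m_1(\lambda)}{2}A_{2\lambda}$, equivalently a strict inequality for the projection of $\nu_{\{1,2\},\{3,4\}}$ onto $\operatorname{im}(N)$ computed by eigenvalue arithmetic in $J(2k,4)$; or positivity of a Krein parameter), and your setup for the projection computation (the values $(2k-5)!!$, $(2k-7)!!$, $3(2k-9)!!$ of $N^{T}\nu_{\{1,2\},\{3,4\}}$ and the expression for $N^{T}N$) is correct as far as it goes, but you never carry out the arithmetic and never exhibit the strict inequality. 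As written, "showing that it is strictly smaller than $(2k-5)!!$ forces \dots'' is a statement of what remains to be proved, not a proof of it, and you acknowledge as much. Until that computation is executed for all $k\geq 4$, the theorem is not established.

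For comparison, the paper closes this gap by a quite different device: induction on $k$ with a dimension count. The base cases $4\leq k\leq 11$ are checked by computer (GAP), and for the inductive step one observes that $V(2k+2)$ contains a subspace isomorphic to $V(2k)$ (built from edges of $K_{2k}\subset K_{2k+2}$), whose dimension, known by induction, exceeds $\dim[2k+2]+\dim[2k,2]+\dim[2k-2,4]$ once $k\geq 11$; since $V(2k+2)$ is $\Sym(2k+2)$-invariant and the module is multiplicity-free, it must then contain all of $[2k-2,2,2]$. Your proposed route, if completed, would be arguably preferable --- it is uniform in $k$, avoids the computer-verified base cases, and isolates the obstruction in a single explicit element of the Bose--Mesner algebra --- but in its current state it stops exactly at the point where the paper's induction does the work.
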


\begin{proof}
Using the ratio bound for the weighted adjacency matrices given in Theorems~\ref{SmallValues} and~\ref{LeastEvalTrick}, we  see that the size of a maximum coclique in $M_{2}(2k)$ is $(2k-5)!!$, so all perfect matchings which have two fixed edges, say $e_{1}$ and $e_{2}$, form a maximum coclique. Thus, by Theorem~\ref{ratioBound}, $\nu_{e_{1},e_{2}}-\frac{(2k-5)!!}{(2k-1)!!}\mathbf{1}$ is an eigenvector for the least eigenvalue. The least eigenvalue is $-1$ and only the modules $[2k-2,2]$, $[2k-4,4]$, and $[2k-4,2,2]$ have $-1$ as their eigenvalues. So $\nu_{e_{1},e_{2}}\in W(2k)$, and $V(2k) \subseteq W(2k)$. 

Denote the dimensions of the sets $W(2k)$ and $V(2k)$, by $D_{W}(2k)$ and $D_{V}(2k)$. Then, by Lemma~\ref{Hook}, and more generally the hook length formula
\[
D_{W}(2k) = 1+\binom{2k}{2}-\binom{2k}{1}+\binom{2k}{4}-\binom{2k}{3}+\frac{(2k)(2k-1)(2k-4)(2k-5)}{12}.
\]
For $4\leq k\leq 11$, using GAP~\cite{GAP4}, we note $D_{V}(2k) = D_{W}(2k)$, thus $V(2k)=W(2k)$. For $k>11$, we prove the same result by induction. 

By summing all vectors in $V(2k+2)$, we obtain a multiple of the all ones vectors, so the module $[2k+2]$ is contained in $V(2k+2)$. Similarly, by summing all vectors with a fixed edge and subtracting an appropriate multiple of the all ones vector, we see the space $V(2k)$ contains the characteristic vector of the set of all perfect matchings that contain a fixed edge. Thus $V(2k+2)$ includes the module $[2k,2]$~\cite[Lemma 8.2]{GMP}.  

For any set $S \subset \{1,2,\dots 2k+2\}$ of size four the characteristic vector of all the perfect matchings in which the four elements of $S$ appear as two independent edges is in $V(2k+2)$. By Theorem~\ref{thm:4sets} these vectors are a spanning set for $\spanof \{ [2k+2], [2k,2], [2k-2,4]\}$. Thus each of these modules is contained in $V(2k)$.

Since the edges of $K_{2k}$ are a subset of the edges form $K_{2k+2}$, the space $V(2k+2)$ contains a subspace isomorphic to $V(2k)$. By induction, the dimension of $V(2k)$ is
\[
D(2k) = 1+\binom{2k}{2}-\binom{2k}{1}+\binom{2k}{4}-\binom{2k}{3}+\frac{(2k)(2k-1)(2k-4)(2k-5)}{12}.
\]
This is a lower bound on the dimension of $V(2k+2)$. 

For $k\geq 11$, 
\[
D_V(2k) > D([2k+2]) + D([2k,2]) + D([2k-2,4])
\]
We conclude that $V(2k+2)$ must include some of the space $[2k-2,2,2]$. But since these are irreducible $G$-modules and $V(2k+2)$ is invariant over $G$, this implies all of $[2k-2,2,2]$ is also contained in $V(2k+2)$.
\end{proof}

Putting these results together, we have our main result.

\begin{theorem}
The size of the largest set of $2$-intersecting perfect matchings in $K_{2k}$ with $k\geq 3$ is $(2k-5)!!$.
Further, if $S$ is a set of $2$-intersecting perfect matchings the characteristic vector of $S$ is a linear combination of the characteristic vectors of the canonically 2-intersecting sets of perfect matchings.
\end{theorem}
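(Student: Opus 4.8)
The plan is to combine the two halves of the paper and read off both conclusions from the ratio bound. First, Theorems~\ref{SmallValues} and~\ref{LeastEvalTrick} together furnish, for every $k\ge 3$, a matrix $M$ in the Bose--Mesner algebra $\mathbb{C}[\mathcal{A}]$ that is a weighted adjacency matrix of the perfect matching derangement graph $M_2(2k)$, with constant row sum $d=(2k-1)(2k-3)-1$ and least eigenvalue $\tau=-1$; moreover the only irreducible $\Sym(2k)$-modules on which $M$ acts with eigenvalue $-1$ are $[2k-2,2]$, $[2k-4,4]$ and $[2k-4,2,2]$. (Here $k=3,4,5$ use the known character tables, $k=6$ uses the clique construction of Theorem~\ref{CliqueApproach}, $7\le k\le 14$ use computed tables, and $k\ge 15$ uses the eigenvalue estimate of Theorem~\ref{LeastEvalTrick}.)

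Next I would apply the Delsarte--Hoffman bound (Theorem~\ref{ratioBound}) to $M$: since $1-\tfrac{d}{\tau}=1+(2k-1)(2k-3)-1=(2k-1)(2k-3)$,
\[
\alpha\bigl(M_2(2k)\bigr)\ \le\ \frac{(2k-1)!!}{1-\tfrac{d}{\tau}}\ =\ \frac{(2k-1)!!}{(2k-1)(2k-3)}\ =\ (2k-5)!!.
\]
Because a coclique of $M_2(2k)$ is exactly a $2$-intersecting family of perfect matchings (Definition~\ref{Mt(2k)}), and a canonically $2$-intersecting set has size precisely $(2k-5)!!$, the bound is attained; hence the maximum size is $(2k-5)!!$, which is the first assertion.

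For the structural claim, take a maximum $2$-intersecting family $S$, with characteristic vector $\nu_S$. The equality clause of Theorem~\ref{ratioBound} says that $\nu_S-\tfrac{(2k-5)!!}{(2k-1)!!}\mathbf{1}$ is a $\tau$-eigenvector of $M$, i.e.\ it lies in the $(-1)$-eigenspace. Since $M\in\mathbb{C}[\mathcal{A}]$, that eigenspace is a sum of irreducible $\Sym(2k)$-modules, and by the first step it is contained in $[2k-2,2]\oplus[2k-4,4]\oplus[2k-4,2,2]$; adding back the multiple of $\mathbf{1}$, which spans the $[2k]$-module, gives $\nu_S\in W(2k)=\spanof\{[2k],[2k-2,2],[2k-4,4],[2k-4,2,2]\}$. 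Finally I would invoke the preceding theorem, $W(2k)=V(2k)=\spanof\{\nu_{e_1,e_2}\}$, whose spanning vectors are exactly the characteristic vectors of the canonically $2$-intersecting sets; therefore $\nu_S$ is a linear combination of those, as claimed.

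The substantive work---building a matrix in the scheme with the prescribed spectrum (Theorem~\ref{LeastEvalTrick}) and identifying $W(2k)$ with $V(2k)$---is already done, so the remaining task is purely organizational: verifying that the case split covers every $k\ge 3$ and that the equality conditions of the ratio bound are applied to this particular $M$. The main limitation to keep in mind is that this argument yields only that $\nu_S$ \emph{lies in the span} of the canonical characteristic vectors; upgrading to a full structural classification (that $S$ must itself be a canonical $2$-intersecting set) would require additional combinatorial input and is not claimed here.
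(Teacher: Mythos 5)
Your proposal follows the paper's own route essentially verbatim: the weighted adjacency matrices of Theorems~\ref{SmallValues} and~\ref{LeastEvalTrick} feed into the ratio bound to give $\alpha(M_2(2k))=(2k-5)!!$, and the equality clause of Theorem~\ref{ratioBound} together with the identification $W(2k)=V(2k)$ places $\nu_S$ in the span of the canonical characteristic vectors. The case split you describe and the caveat that only membership in the span (not a full classification) is obtained both match the paper exactly.
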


\section{Further Work}

In this paper, we proved that the Erd\H{o}s-Ko-Rado theorem holds for $2$-intersecting families of perfect matchings of the complete graph $K_{2k}$. Our first open question is if 
these approaches can be generalized
to prove a version of the Erd\H{o}s-Ko-Rado theorem for the family of $t$-intersecting perfect matchings of the complete graph $K_{2k}$, where $t>2$? This has been done for $k$ sufficiently large relative to $t$ in~\cite{L}. In this work, it is quite remarkable that we were able to find a weighted adjacency matrix for $M_2(2k)$ for which the ratio bound holds with equality that only uses three of the classes of the association scheme. It is an open question if a comparably simple weighted adjacency matrix would exist for larger values of $t$.\\

Our next question is, if we can characterize the largest set of  $t$-intersecting perfect matchings for graphs other than the complete graph? In the special case that the graph is the complete bipartite graph $K_{n,n}$, each perfect matching corresponds to a permutation. The set of intersecting permutations has been well-studied and versions of the EKR theorem hold~\cite{CaKu, Ellis}. It would be interesting to consider other bipartite graphs, such as the hypercube, although just enumerating the perfect matchings maybe quite difficult.\\

Finally, while we were working on computing various entries of the character table of the perfect matching association scheme, we observed some interesting patterns for the values in the table. There are conjectures and some results about signs and values of the eigenvalues in the association scheme for the permutations, see~\cite{Ku2}. We suspect that there are similar results for the eigenvalues in the perfect matching scheme. For example, we make the following related interesting conjecture.

\begin{conj}
Consider the character table of the perfect matching association scheme for $2k$. The greatest eigenvalue in the row corresponding to the module $[2k-2\ell, 2\ell]$ is the one that corresponds to the same class of the scheme, $[2k-2\ell,2\ell]$. In addition, in the same row all the eigenvalues corresponding to the classes which are greater than $[2k-2\ell,2\ell]$ (in the dominance ordering) are negative.
\end{conj}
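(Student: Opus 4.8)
The plan is to treat the two assertions separately and, crucially, to first cut the problem down to two-row partitions. \textbf{Reduction.} I would begin by identifying exactly which classes lie \emph{above} $[2k-2\ell,2\ell]$ in the dominance order. If an even partition $\nu\vdash 2k$ dominates $[2k-2\ell,2\ell]$, then comparing second partial sums forces $\nu_1+\nu_2\geq 2k$; since $\nu_1+\nu_2\leq 2k$ always, $\nu$ has at most two parts, and comparing first partial sums then gives $\nu_1\geq 2k-2\ell$. Hence the classes strictly above $[2k-2\ell,2\ell]$ are precisely the two-row classes $[2k-2m,2m]$ with $0\leq m<\ell$, forming a chain $[2k]>[2k-2,2]>\cdots>[2k-2\ell,2\ell]$. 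Thus the second assertion of the conjecture involves only eigenvalues indexed by a two-row module against a two-row class — a small, localized piece of Table~\ref{CharTable}.

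\textbf{The two-row block via a quotient.} To reach these entries I would apply the quotient-graph machinery of Theorem~\ref{module} with the Young subgroup $\Sym(2k-2\ell)\times\Sym(2\ell)$. Splitting $\{1,\dots,2k\}$ into blocks of sizes $2k-2\ell$ and $2\ell$, the orbits of this group on perfect matchings are indexed by the number of crossing edges; as this number is even and between $0$ and $2\ell$, there are exactly $\ell+1$ orbits, which I index by $r\in\{0,\dots,\ell\}$. By Theorem~\ref{module} the resulting $(\ell+1)\times(\ell+1)$ quotient matrices $A_{2\mu}/\pi([2k-2\ell,2\ell])$ carry exactly the modules $[2k-2j,2j]$ for $0\leq j\leq\ell$, with $[2k-2\ell,2\ell]$ the dominance-minimal one present. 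The quotients of the two-row classes commute (they are images of commuting scheme elements) and are simultaneously diagonalized by lifts of a single family of vectors $f_0,\dots,f_\ell$, one per two-row module. The entry $\eta([2k-2\ell,2\ell],[2k-2m,2m])$ is then read off by applying the quotient of $A_{[2k-2m,2m]}$ to $f_\ell$.

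\textbf{Signs in the two-row block (second assertion).} I expect $f_j$, viewed as a function of the orbit index $r$, to be a polynomial of degree $j$ in $r$ that is orthogonal with respect to the orbit-size weights — the statement that the induced rank-$\ell$ structure is $Q$-polynomial with eigenvalues of Hahn / dual-Hahn type, the zonal ($\alpha=2$) analogue of the Johnson-scheme picture appearing in Theorem~\ref{thm:4sets}. Granting this, the negativity of $\eta([2k-2\ell,2\ell],[2k-2m,2m])$ for $m<\ell$ follows from the known sign behaviour and zero-interlacing of these orthogonal polynomials. A more self-contained alternative, entirely in the style of Example~\ref{ex:ep}, is to compute these entries by induction on $\ell$: the degree eigenvalue and all higher two-row eigenvalues are already known, so subtracting them from the trace of the quotient matrix isolates $\eta([2k-2\ell,2\ell],\,\cdot\,)$, after which a direct estimate fixes its sign.

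\textbf{Global maximality (first assertion) and the main obstacle.} The harder half is that the diagonal entry $\eta([2k-2\ell,2\ell],[2k-2\ell,2\ell])$ is the maximum over \emph{all} classes, including those incomparable to or below $[2k-2\ell,2\ell]$, whose eigenvalues need not be negative. Here the reduction no longer localizes the problem, and I would instead control magnitudes exactly as in the proof of Theorem~\ref{LeastEvalTrick}: writing $\eta(2\lambda,2\mu)=v_{2\mu}\,\omega_{2\lambda}(2\mu)$ for the valency $v_{2\mu}$ and the normalized zonal spherical value, the identity $\operatorname{tr}\!\big(A_{2\mu}^2\big)=\sum_{2\lambda} m_{2\lambda}\,\eta(2\lambda,2\mu)^2 = (2k-1)!!\,v_{2\mu}$ forces $\eta(2\lambda,2\mu)^2$ to be small whenever the multiplicity $m_{2\lambda}$ is large, and the dimension lower bounds of Theorem~\ref{RaTheorem}, Theorem~\ref{F(n)Theorem} and Lemma~\ref{Hook} supply precisely this largeness for two-row modules. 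One then checks that the positive diagonal value outgrows these bounds for large $k$, dispatching the finitely many small $k$ with the partial character table and software as in Theorem~\ref{SmallValues}. The principal difficulty I anticipate is obtaining uniform control, over all $\ell$ simultaneously, of the eigenvalues at the non-dominating classes: the second-moment bound is tight only when one multiplicity dwarfs the rest, yet for $\ell$ comparable to $k$ the module $[2k-2\ell,2\ell]$ need not have the largest multiplicity, so the clean trace estimate must be supplemented by a finer, likely recursive, comparison of the diagonal entry against individual off-diagonal entries.
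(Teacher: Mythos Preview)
The statement you are trying to prove is presented in the paper as an open \emph{conjecture} in the ``Further Work'' section; the paper offers no proof, partial proof, or proof sketch for it. There is therefore nothing to compare your proposal against, and any argument you produce would go strictly beyond what the paper establishes.

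That said, two remarks on your proposal itself. First, your reduction step assumes the standard dominance order, but the paper explicitly defines ``dominance ordering'' as the \emph{lexicographic} order (see the paragraph preceding Theorem~\ref{Ind.Decomp.1}). Under the paper's definition, the even partitions strictly greater than $[2k-2\ell,2\ell]$ are all even partitions whose first part exceeds $2k-2\ell$, not only the two-row partitions $[2k-2m,2m]$ with $m<\ell$; for instance $[2k-2\ell+2,\,2\ell-4,\,2]$ is lex-greater than $[2k-2\ell,2\ell]$ whenever $\ell\ge 3$. So either the conjecture is intended under true dominance (in which case your reduction is fine but you should flag the discrepancy with the paper's stated definition), or the second assertion is a much larger claim than you have reduced it to.

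Second, even granting the two-row reduction, your argument for the sign pattern rests on an unproven $Q$-polynomial / Hahn-type structure for the two-row quotient (``I expect'', ``Granting this''), and your argument for global maximality explicitly identifies an obstacle you do not resolve: the trace bound from Theorem~\ref{LeastEvalTrick} controls an eigenvalue only when the corresponding multiplicity is large, and you note yourself that for $\ell$ comparable to $k$ this fails. As written, the proposal is a reasonable research plan but not a proof, which is consistent with the paper's own assessment that the statement remains open.
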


\section{Acknowledgements}
We wish to thank Brett Stevens who introduced us to the clique used in Theorem~\ref{CliqueApproach}.

\bibliographystyle{plain}

\newpage
\appendix
\section{The Adjacency Matrices of Some Quotient graphs in the Perfect Matching Association Scheme}
\subsection*{The adjacency matrices of the quotient graphs corresponding to the group $\Sym(2k-2)\times \Sym(2)$}
\begin{center}
\begin{tabular}{N|M{2.7cm}|M{2.7cm}|}
\hline 
\vspace*{0.3cm}
\rule{0pt}{10pt}& $\bf{0}$ & $(2k-2)!!$ \\ [1ex] 
\hline
\vspace*{0.3cm}
\rule{0pt}{10pt}& $(2k-4)!!$ & $(2k-3)(2k-4)!!$ \\[1ex] 
\hline
\end{tabular}
\vspace*{0.2cm}
\captionof{table}{The adjacency matrix of $X_{[2k]}/[2k-2,2]$}
\end{center}
\begin{center}
\begin{tabular}{N|M{2.7cm}|M{2.7cm}|}
\hline 
\vspace*{0.3cm}
\rule{0pt}{13pt}& $(2k-4)!!$ & $(k-1)(2k-4)!!$ \\ [1ex] 
\hline
\vspace*{0.3cm}
\rule{0pt}{10pt}& $\frac{1}{2}(2k-4)!!$ & $\frac{1}{2}(2k-1)(2k-4)!!$\\[1ex] 
\hline
\end{tabular}
\vspace*{0.2cm}
\captionof{table}{The adjacency matrix of $X_{[2k-2,2]}/[2k-2,2]$}
\end{center}
\begin{center}
\begin{tabular}{N|M{2.7cm}|M{2.7cm}|}
\hline 
\vspace*{0.4cm}
\rule{0pt}{10pt}& $\bf{0}$ & $\frac{(2k)!!}{6(2k-6)}$ \\ [1ex] 
\hline
\vspace*{0.4cm}
\rule{0pt}{10pt}& $\frac{(2k)(2k-4)!!}{6(2k-6)}$ & $\frac{(2k)(2k-3)(2k-4)!!}{6(2k-6)}$\\[1ex] 
\hline
\end{tabular}
\vspace*{0.2cm}
\captionof{table}{The adjacency matrix of $X_{[2k-6,6]}/[2k-2,2]$}
\end{center}
\subsection*{The adjacency matrices of the quotient graphs corresponding to the group $\Sym(2k-4)\times \Sym(4)$}
\centering
\begin{tabular}{N| M{3.5cm} | M{3.9cm} | M{3.9cm} |}
\hline 
\vspace*{0.5cm}
\rule{0pt}{10pt}& $\bf{0}$ & $4(2k-4)!!$ & $(2k-6)(2k-4)!!$ \\ [1ex] 
\hline
\vspace*{0.5cm}
\rule{0pt}{10pt}& $2(2k-6)!!$ & $2(5k-12)(2k-6)!!$ & $(2k-6)(2k-5)(2k-6)!!$\\[1ex] 
\hline 
\vspace*{0.5cm}
\rule{0pt}{10pt}& $3(2k-6)!!$& $6(2k-5)(2k-6)!!$ & $(2k-7)(2k-5)(2k-6)!!$ \\[1ex] 
\hline 
\end{tabular}
\vspace*{0.2cm}
\captionof{table}{The adjacency matrix of $X_{[2k]}/[2k-4,4]$}
\centering
\begin{tabular}{N| M{3.5cm} | M{3.9cm} | M{3.9cm} |}
\hline 
\vspace*{0.5cm}
\rule{0pt}{10pt}& $\bf{0}$ & $4(2k-4)!!$ & $(k-4)(2k-4)!!$ \\ [1ex] 
\hline
\vspace*{0.5cm}
\rule{0pt}{10pt}& $2(2k-6)!!$ & $(7k-18)(2k-6)!!$ & $(2k^{2}-11k+16)(2k-6)!!$\\[1ex] 
\hline 
\vspace*{0.5cm}
\rule{0pt}{10pt}& $3(k-4)(2k-8)!!$& $6(2k^{2}-11k+16)(2k-8)!!$ & $(2k^{2}-9k+12)(2k-7)(2k-8)!!$ \\[1ex] 
\hline 
\end{tabular}
\vspace*{0.2cm}
\captionof{table}{The adjacency matrix of $X_{[2k-2,2]}/[2k-4,4]$}
\subsection*{The adjacency matrices of the quotient graphs corresponding to the group $\Sym(2k-6)\times \Sym(6)$}
\begin{centering}
\begin{tabular}{N| M{2cm} | M{3cm} | M{3.7cm} | M{3.7cm} |}
\hline 
\vspace*{0.6cm}
\rule{0pt}{15pt}& $\bf{0}$ & $24(2k-6)!!$ & $12(2k-8)(2k-6)!!$ & $(2k-8)(2k-10)(2k-6)!!$ \\ [1ex] 
\hline
\vspace*{0.6cm}
\rule{0pt}{15pt}& $8(2k-8)!!$ & $8(8k-27)(2k-8)!!$ & $2(13k-45)(2k-8)(2k-8)!!$ & $(2k-7)(2k-8)(2k-10)(2k-8)!!$\\[1ex] 
\hline 
\vspace*{0.6cm}
\rule{0pt}{15pt}& $12(2k-8)!!$ & $6(13k-45)(2k-8)!!$ & $4(7k-30)(2k-7)(2k-8)!!$ & $(2k-7)(2k-9)(2k-10)(2k-8)!!$\\[1ex] 
\hline 
\vspace*{0.6cm}
\rule{0pt}{15pt}& $15(2k-8)!!$ & $45(2k-7)(2k-8)!!$ & $15(2k-7)(2k-9)(2k-8)!!$ & $(2k-7)(2k-9)(2k-11)(2k-8)!!$\\[1ex] 
\hline 
\end{tabular}
\vspace*{0.2cm}
\captionof{table}{The adjacency matrix of $X_{[2k]}/[2k-6,6]$}
\end{centering}
\begin{centering}
\begin{tabular}{N| M{2cm} | M{3cm} | M{3.7cm} | M{3.7cm} |}
\hline 
\vspace*{0.6cm}
\rule{0pt}{15pt}& $\bf{0}$ & $24(2k-6)!!$ & $6(3k-14)(2k-6)!!$ & $(k-5)(2k-12)(2k-6)!!$ \\ [1ex] 
\hline
\vspace*{0.6cm}
\rule{0pt}{15pt}& $8(2k-8)!!$ & $4(13k-48)(2k-8)!!$ & $(34k^{2}-274k+564)(2k-8)!!$ & $2(2k^{3}-27k^{2}+123k-190)(2k-8)!!$\\[1ex] 
\hline 
\vspace*{0.6cm}
\rule{0pt}{15pt}& $6(3k-14)(2k-10)!!$ & $6(17k^{2}-137k+282)(2k-10)!!$ & $2(32k^{3}-390k^{2}+1627k-2334)(2k-10)!!$ & $(8k^{4}-136k^{3}+886k^{2}-2642k+3060)(2k-10)!!$\\[1ex] 
\hline 
\vspace*{0.6cm}
\rule{0pt}{15pt}& $15(k-6)(2k-10)!!$ & $45(2k^{2}-17k+38)(2k-10)!!$ & $15(4k^{3}-48k^{2}+203k-306)(2k-10)!!$ & $(8^{4}-132k^{3}+838k^{2}-2487k+2970)(2k-10)!!$\\[1ex] 
\hline 
\end{tabular}
\vspace*{0.2cm}
\captionof{table}{The adjacency matrix of $X_{[2k-2]}/[2k-6,6]$}
\end{centering}
\begin{centering}
\begin{tabular}{N| M{2cm} | M{3cm} | M{3.7cm} | M{3.7cm} |}
\hline 
\vspace*{0.8cm}
\rule{0pt}{15pt}& $\bf{0}$ & $12(2k-6)!!$ & $3(2k-8)(2k-6)!!$ & $(k^{2}-7k+12)(2k-6)!!$ \\ [1ex] 
\hline
\vspace*{0.8cm}
\rule{0pt}{15pt}& $4(2k-8)!!$ & $10(2k-6)!!$ & $(13k^{2}-79k+108)(2k-8)!!$ & $(2k^{3}-21k^{2}+65k-52)(2k-8)!!$\\[1ex] 
\hline 
\vspace*{0.8cm}
\rule{0pt}{15pt}& $3(2k-8)!!$& $3(13k^{2}-79k+108)(2k-10)!!$ & $(28k^{3}-274k^{2}+792k-576)(2k-10)!!$ & $(4k^{4}-60k^{3}+311k^{2}-609k+276)(2k-10)!!$ \\[1ex] 
\hline 
\vspace*{0.8cm}
\rule{0pt}{20pt}& $15(k^{2}-7k+12)(2k-12)!!$ & $45(2k^{3}-21k^{2}+65k-52)(2k-12)!!$ & $15(4k^{4}-60k^{3}+311k^{2}-609k+276)(2k-12)!!$ & $(8k^{5}-164k^{4}+1282k^{3}-4591k^{2}+6795k-1980)(2k-12)!!$ \\[1ex]
\hline
\end{tabular}
\vspace*{0.2cm}
\caption{The adjacency matrix of $X_{[2k-4,4]}/[2k-6,6]$}
\end{centering}
\begin{centering}
\begin{tabular}{N| M{2cm} | M{3cm} | M{3.7cm} | M{3.7cm} |}
\hline 
\vspace*{0.7cm}
\rule{0pt}{15pt}& $8(2k-8)!!$ & $4(2k-6)!!$ & $2(2k-2)(2k-6)!!$ & $\frac{2}{3}(k^{2}-6k+2)(2k-6)!!$ \\ [1ex] 
\hline
\vspace*{0.7cm}
\rule{0pt}{15pt}& $\frac{4}{3}(2k-8)!!$ & $(\frac{32}{3}k-4)(2k-8)!!$ & $(\frac{26}{3}k^{2}-\frac{116}{3}k+4)(2k-8)!!$ & $(\frac{4}{3}k^{3}-\frac{38}{3}k^{2}+\frac{92}{3}k-\frac{4}{3})(2k-8)!!$\\[1ex] 
\hline 
\vspace*{0.7cm}
\rule{0pt}{15pt}& $2(2k-2)(2k-10)!!$ & $(26k^{2}-116k+12)(2k-10)!!$ & $(\frac{56}{3}k^{3}-164k^{2}+\frac{1108}{3}k-12)(2k-10)!!$ & $(\frac{8}{3}k^{4}-\frac{112}{3}k^{3}+\frac{526}{3}k^{2}-\frac{836}{3}k+4)(2k-10)!!$\\[1ex] 
\hline 
\vspace*{0.7cm}
\rule{0pt}{23pt}& $10(k^{2}-6k+2)(2k-12)!!$ & $60(k^{3}-\frac{19}{2}k^{2}+23k-1)(2k-12)!!$ & $40(k^{4}-14k^{3}+\frac{263}{4}k^{2}-\frac{209}{2}k+\frac{3}{2})(2k-12)!!$ & $(\frac{16}{3}k^{5}-104k^{4}+\frac{2284}{3}k^{3}-2486k^{2}+\frac{9220}{3}k-20)(2k-12)!!$\\[1ex] 
\hline 
\end{tabular}
\vspace*{0.2cm}
\captionof{table}{The adjacency matrix of $X_{[2k-6,6]}/[2k-6,6]$}
\end{centering}
\subsection*{The adjacency matrices of the quotient graphs corresponding to the group $\Sym(2k-6)\times \Sym(6)$}
\begin{centering}
\begin{tabular}{N| M{1.5cm} | M{2cm} | M{2cm} | M{2cm}| M{2cm}| M{2.4cm} |}
\hline 
\vspace*{0.7cm}
\rule{0pt}{13pt}&$\bf{0}$ & $\bf{0}$ & $\bf{0}$ & $\bf{0}$ & $4(2k-4)!!$ & $(2k-6)(2k-4)!!$ \\ [1ex] 
\hline
\vspace*{0.7cm}
\rule{0pt}{13pt}&$\bf{0}$ & $\bf{0}$ & $(2k-4)!!$ & $2(2k-6)!!$ & $4(2k-5)(2k-6)!!$ & $(2k-5)(2k-6)(2k-6)!!$ \\ [1ex] 
\hline 
\vspace*{0.7cm}
\rule{0pt}{13pt}&$\bf{0}$ & $(2k-4)!!$ & $\bf{0}$ & $2(2k-6)!!$ & $4(2k-5)(2k-6)!!$ & $(2k-5)(2k-6)(2k-6)!!$ \\ [1ex] 
\hline 
\vspace*{0.7cm}
\rule{0pt}{13pt}&$\bf{0}$ & $(2k-4)!!$ & $(2k-4)!!$ & $\bf{0}$ & $2(2k-4)!!$ & $(2k-6)(2k-4)!!$ \\ [1ex] 
\hline 
\vspace*{0.7cm}
\rule{0pt}{13pt}&$(2k-6)!!$ & $(2k-5)(2k-6)!!$ & $(2k-5)(2k-6)!!$ & $(2k-6)!!$ & $(6k-14)(2k-6)!!$ & $(2k-5)(2k-6)(2k-6)!!$ \\ [1ex] 
\hline 
\vspace*{0.7cm}
\rule{0pt}{13pt}&$(2k-6)!!$ & $(2k-5)(2k-6)!!$ & $(2k-5)(2k-6)!!$ & $2(2k-6)!!$ & $4(2k-5)(2k-6)!!$ & $(2k-5)(2k-7)(2k-6)!!$ \\ [1ex] 
\hline 
\end{tabular}
\vspace*{0.5cm}
\captionof{table}{The adjacency matrix of $X_{[2k]}/[2k-4,2,2]$}
\end{centering}
\begin{sidewaystable}
\begin{centering}
\begin{tabular}{N| M{1.7cm} | M{2.7cm} | M{2.7cm} | M{2.7cm}| M{2.7cm}| M{3.5cm} |}
\hline 
\vspace*{0.7cm}
\rule{0pt}{13pt}&$\bf{0}$ & $(2k-4)!!$ & $(2k-4)!!$ & $\bf{0}$ & $2(2k-4)!!$ & $(k-4)(2k-4)!!$ \\ [1ex] 
\hline
\vspace*{0.7cm}
\rule{0pt}{13pt}&$(2k-6)!!$ & $(2k-5)(2k-6)!!$ & $(k-3)(2k-6)!!$ & $(2k-6)!!$ & $2(2k-5)(2k-6)!!$ & $(2k^{2}-11k+16)(2k-6)!!$ \\ [1ex] 
\hline 
\vspace*{0.7cm}
\rule{0pt}{13pt}&$(2k-6)!!$ & $(k-3)(2k-6)!!$ & $(2k-5)(2k-6)!!$ & $(2k-6)!!$ & $2(2k-5)(2k-6)!!$ & $(2k^{2}-11k+16)(2k-6)!!$ \\ [1ex] 
\hline 
\vspace*{0.7cm}
\rule{0pt}{13pt}&$\bf{0}$ & $\frac{1}{2}(2k-4)!!$ & $\frac{1}{2}(2k-4)!!$ & $\bf{0}$ & $3(2k-4)!!$ & $(k-4)(2k-4)!!$ \\ [1ex] 
\hline 
\vspace*{0.7cm}
\rule{0pt}{13pt}&$\frac{1}{2}(2k-6)!!$ & $\frac{1}{2}(2k-5)(2k-6)!!$ & $\frac{1}{2}(2k-5)(2k-6)!!$ & $\frac{3}{2}(2k-6)!!$ & $(5k-13)(2k-6)!!$ & $(2k^{2}-11k+16)(2k-6)!!$ \\ [1ex] 
\hline 
\vspace*{0.7cm}
\rule{0pt}{13pt}&$(k-4)(2k-8)!!$ & $(2k^{2}-11k+16)(2k-8)!!$ & $(2k^{2}-11k+16)(2k-8)!!$ & $(2k-8)(2k-8)!!$ & $4(2k^{2}-11k+16)(2k-8)!!$ & $(2k^{2}-9k+12)(2k-7)(2k-8)!!$ \\ [1ex] 
\hline 
\end{tabular}
\vspace*{0.5cm}
\captionof{table}{The adjacency matrix of $X_{[2k-2,2]}/[2k-4,2,2]$}
\end{centering}
\end{sidewaystable}
\begin{sidewaystable}
\centering
\begin{tabular}{N| m{3cm} | m{3cm} | m{3cm} | m{3cm} | m{3cm} | m{3cm} |}
\hline 
\vspace*{0.8cm}
\rule{0pt}{13pt}& $\bf{0}$ & $\bf{0}$ & $\bf{0}$ & $2(2k-6)!!$ & $(2k-4)!!$ & $\frac{1}{4}(2k-2)!!$ \\ [1ex] 
\hline
\vspace*{0.8cm}
\rule{0pt}{13pt}& $\bf{0}$ & $\bf{0}$ & $\frac{1}{2}k(2k-6)!!$ & $\frac{1}{2}(2k-6)!!$ & $(2k-1)(2k-6)!!$ & $\frac{1}{2}(2k^{2}-7k+1)(2k-6)!!$\\[1ex] 
\hline
\vspace*{0.8cm}
\rule{0pt}{13pt}& $\bf{0}$ & $\frac{1}{2}k(2k-6)!!$ & $\bf{0}$ & $\frac{1}{2}(2k-6)!!$ & $(2k-1)(2k-6)!!$ & $\frac{1}{2}(2k^{2}-7k+1)(2k-6)!!$\\[1ex] 
\hline
\vspace*{0.8cm}
\rule{0pt}{13pt}& $(2k-6)!!$ & $\frac{1}{4}(2k-4)!!$ & $\frac{1}{4}(2k-4)!!$ & $(2k-6)!!$ & $\frac{1}{2}(2k-4)!!$ &  $\frac{1}{4}(2k-2)!!$\\[1ex] 
\hline
\vspace*{0.8cm}
\rule{0pt}{13pt}& $\frac{1}{4}(2k-6)!!$ & $\frac{1}{4}(2k-1)(2k-6)!!$ & $\frac{1}{4}(2k-1)(2k-6)!!$ & $\frac{1}{4}(2k-6)!!$ & $\frac{1}{2}(3k-1)(2k-6)!!$ &  $\frac{1}{2}(2k^{2}-7k+1)(2k-6)!!$\\[1ex] 
\hline
\vspace*{0.8cm}
\rule{0pt}{13pt}& $\frac{1}{2}(k-1)(2k-8)!!$ & $\frac{1}{2}(2k^{2}-7k+1)(2k-8)!!$ & $\frac{1}{2}(2k^{2}-7k+1)(2k-8)!!$ & $(k-1)(2k-8)!!$& $2(2k^{2}-7k+1)(2k-8)!!$ & $(2k^{3}-14k^{2}+\frac{51}{2}k-\frac{3}{2})(2k-8)!!$\\[1ex] 
\hline
\end{tabular}
\vspace*{0.5cm}
\caption{The adjacency matrix of $X_{[2k-4,4]}/[2k-4,2,2]$}

\end{sidewaystable}

\end{document}